\documentclass[12pt]{amsart}

\usepackage{amssymb}

\usepackage[all]{xy}

\makeatletter
\@addtoreset{equation}{section}
\makeatother

\theoremstyle{plain}
\newtheorem{theorem}{Theorem}[section]
\newtheorem{lemma}[theorem]{Lemma}
\newtheorem{proposition}[theorem]{Proposition}
\newtheorem{corollary}[theorem]{Corollary}
\newtheorem{question}[theorem]{Question}
      
\theoremstyle{definition}

\newtheorem{remark}[theorem]{Remark}
\newtheorem{example}[theorem]{Example}

%%MACROS

\newcommand{\Cee}{{\mathbb C}}
\newcommand{\Ree}{{\mathbb R}}
\newcommand{\En}{{\mathbb N}}
\newcommand{\kay}{{\Bbbk}}
\newcommand{\Oh}{{\mathbb O}}
\newcommand{\Que}{{\mathbb Q}}
\newcommand{\Tee}{{\mathbb T}}
\newcommand{\Zee}{{\mathbb Z}}

\newcommand{\alp}{\alpha}
\newcommand{\del}{\delta}
\newcommand{\Del}{\Delta}
\newcommand{\eps}{\varepsilon}
\newcommand{\lam}{\lambda}
\newcommand{\vphi}{\varphi}
\newcommand{\sig}{\sigma}

\newcommand{\fA}{\mathcal{A}}
\newcommand{\fB}{\mathcal{B}}
\newcommand{\fC}{\mathcal{C}}
\newcommand{\fH}{\mathcal{H}}
\newcommand{\fL}{\mathcal{L}}
\newcommand{\fN}{\mathcal{N}}

\newcommand{\ck}{\mathrm{K}}
\newcommand{\crk}{\mathrm{RK}}
\newcommand{\cmap}{\mathrm{MAP}}
\newcommand{\csin}{\mathrm{SIN}}
\newcommand{\cp}{\mathrm{P}}
\newcommand{\crp}{\mathrm{RP}}
\newcommand{\crsin}{\mathrm{RSIN}}

\newcommand{\ls}{\mathfrak{s}}

\newcommand{\Fr}{\mathrm{F}}
\newcommand{\GL}{\mathrm{GL}}
\newcommand{\SL}{\mathrm{SL}}
\newcommand{\SO}{\mathrm{SO}}

\newcommand{\Sp}{\mathrm{Sp}}
\newcommand{\Un}{\mathrm{U}}
\newcommand{\mat}{\mathrm{M}}

\newcommand{\spn}{\mathrm{span}}
\newcommand{\ind}{\mathrm{ind}}

\newcommand{\op}{\mathrm{op}}

\newcommand{\alg}{\mathrm{G}}
\newcommand{\cont}{\mathrm{C}}
\newcommand{\cstar}{\mathrm{C}^*}

\newcommand{\fsal}{\mathrm{B}}
\newcommand{\bl}{\mathrm{L}}

\newcommand{\pos}{\mathrm{P}}
\newcommand{\tpos}{\mathrm{T}}
\newcommand{\Tr}{\mathrm{Tr}}
\newcommand{\amTr}{\mathrm{amTr}}
\newcommand{\vn}{\mathrm{VN}}

\newcommand{\til}[1]{\tilde{#1}}
\newcommand{\what}[1]{\widehat{#1}}
\newcommand{\wtil}[1]{\widetilde{#1}}
\newcommand{\wbar}[1]{\overline{#1}}

%% END MACROS
 
\begin{document}
 
\author{Brian E. Forrest}
\address{Pure Mathematics, University of Waterloo, Waterloo, ON, N2L 3G1, Canada}
\email{beforrest@uwaterloo.ca}
 
\author{Nico Spronk}
\address{Pure Mathematics, University of Waterloo, Waterloo, ON, N2L 3G1, Canada}
\email{nspronk@uwaterloo.ca}

\author{Matthew Wiersma}
\address{Department of Mathematics and Statistics, University of Winnipeg, 515 Portage Ave, Winnipeg, MB R3B 2E9, Canada}
\email{m.wiersma@uwinnipeg.ca}

\title[Traces on locally compact groups]{Traces on locally compact groups}

\begin{abstract}
We conduct a systematic study of traces on locally compact groups, in particular
traces on their universal and reduced C*-algebras.  We introduce the trace kernel,
and examine its relation to the von Neumann kernel and to small-invariant neighbourhood
($\csin$) quotients.  In doing so, we introduce the class of residually-$\csin$ groups,
which contains both $\csin$ and maximally almost periodic groups.  We examine in
detail the trace kernel for connected groups.  We study traces on reduced C*-algebras,
giving a simple proof for compactly generated groups that existence of such a trace
is equivalent to having an open normal amenable subgroup, and we 
display non-discrete groups admitting unique trace.  

We finish by examining amenable
traces and the factorization property.  We show for property (T) groups that  amenable
trace kernels coincide with von Neumann kernels.  We show for totally disconnected 
groups that amenable trace separation implies the factorization property.  We use amenable
traces to give a simple proof that amenability of the group is equivalent to simultaneous 
nuclearity and possessing a trace of its reduced C*-algebra.
As a final application of the results obtained in the paper, we address the embeddability of group C*-algebras into simple AF algebras.  As a consequence, if a locally compact group is amenable 
and tracially separated (trace kernel is trivial), then its reduced C*-algebra is quasi-diagonal.
\end{abstract}

\subjclass{Primary 22D25; Secondary 43A35, 22D05, 43A07}

\thanks{The three authors were partially supported by an NSERC Discovery Grants.}

%\dedicatory{}

 \date{\today}
 
 \maketitle
 
%%MAIN BODY

\section{Introduction and background}

Tracial states play an indispensable role when studying C*-algebras and are intimately related to many important structural properties. For example, unital quasidiagonal C*-algebras admit amenable traces and every tracial state on a C*-algebra with the weak expectation property (WEP) is amenable. As such, obtaining an understanding of the set of tracial states on a C*-algebra may allow one to deduce further structural properties of the C*-algebra. In this paper, we initiate a systematic study of tracial states on group C*-algebras. Being one of the most prolifically studied classes of C*-algebras, it is unsurprising that this problem has been considered in these special cases before.  If G is a discrete group, then it is well-known that its reduced $\cstar$-algebra $\cstar_r(G)$ admits a tracial state. Moreover, the  uniqueness of  this trace on $\cstar_r(G)$  is linked to the simplicity $\cstar_r(G)$. 

A locally compact group is said to be \textit{C*-simple} if the reduced C*-algebra $\cstar_r(G)$ is simple, or equivalently if every continuous unitary representation $\pi$ of $G$ that is weakly contained in the left regular representation $\lambda$ is weakly equivalent to $\lambda$.  The literature concerning C* simplicity for locally compact groups is very rich. Since a non-trivial connected group is never C*-simple, the focus in studying C*-simple groups has been on discrete groups. Interest in such groups can be traced back to a question of Dixmier who asked in 1967 whether every simple C*-algebra is generated by its projections. Later Kadison conjectured that the reduced C*-algebra of $\mathbb{F}_2$, the free group on two generators, might provide an example of a simple C*-algebra with no non-trivial projections. The simplicity of $\cstar_r(\mathbb{F}_2)$ 
was shown by Powers in \cite{powers}, but he was unable to determine whether or not $\cstar_r(\mathbb{F}_2)$ was without non-trivial projections. This was verified several years later by Pimsner and Voiculescu \cite{PimVoi} completing the proof of Kadison's Conjecture. Following
 the work of Powers, W. L. Paschke and N. Salinas  \cite{paschkes} showed that the reduced C*-algebra of the free product of two groups not
 both of order two is simple and has a unique tracial state. P. de la Harpe, together with M. Bekka and M. Cowling  gave additional example
 of C*-simple discrete  groups with unique traces including non -solvable subgroups of ${\rm PSL}(2,\mathbb{R})$ and non-almost solvable subgroups of  ${\rm PSL}(2,\mathbb{C})$.  See \cite{delaharpe}, \cite{bekkacdlh} and \cite {bekkacdlh1} as well as \cite{delaharpe2}. In an unpublished
 manuscript \cite{Poz}, T. Poznansky   showed that  a linear group $G$ is C*-simple if and only if  $\cstar_r(G)$ admits a unique trace.   More
 recently  Kalantar and Kennedy  \cite{Kalken}  showed that if $\cstar_r(G)$ is simple, then the trace on $\cstar_r(G)$ is always unique. From there a
 complete characterization of when  $\cstar_r(G)$ admits a unique tracial state for a discrete group $G$ was given by Breuillard, Kalantar, Kennedy
 and Ozawa \cite{breuillardkko}.  In particular, they show that a discrete group has the unique trace property if and only if the amenable radical,
 the largest normal amenable subgroup, is trivial. They also show that a discrete group with the uniques trace property that has non-trivial
 bounded cohomology or  non-vanishing $\ell^2$-Betti numbers is C*-simple. Finally, Le Boudec  \cite{leboudec} settled the question of the
 possible equivalence of the unique trace property with C*-simplicity by exhibiting an example of  discrete group $G$ with the unique trace
 property that is not C*-simple.

In contrast with the discrete case, for non-discrete groups  $\cstar_r(G)$ need not always have a tracial state, though it does when $G$ is amenable. For locally compact groups, the problem of when the reduced group C*-algebra admits a tracial state has been studied by various authors. This problem was first solved in the case of separable and connected groups by Ng (see \cite{ng}) and later solved in the compactly generated case by the present authors (see \cite{forrestsw}) where we showed that when $G$ is compactly generated, $\cstar_r(G)$  admits a tracial state if and only if $G$ has an open amenable subgroup. We also speculated that the assumption of $G$ being compactly generated was not necessary. While  \cite{forrestsw} was being considered for publication, Kennedy and Raum \cite{kennedyr}, using entirely different techniques were able,  to remove the assumption that $G$ was almost compactly generated to show that our speculation was correct. While the result of Kennedy and Raum is more general, our approach had the advantage in that it gave more detailed information about the structure of groups for which $\cstar_r(G)$ possesses a trace. Moreover, it gave us tools to address the more general question concerning  the nature of the traces on the full group C*-algebra $\cstar(G)$. 

Our approach to systematically studying tracial states on the full group C*-algebra of a locally compact group relies on the introduction of the ``trace kernel'', which is the closed and normal subgroup $N_{\Tr}$ of the locally compact group $G$ corresponding to group elements that are not separated from the identity by tracial states. The introduction of the trace kernel allows us to relate the set of tracial states on the full group C*-algebra with the structure of the corresponding locally compact group.  In general, the smaller that $N_{\Tr}(G)$ is the richer the collection of traces. Moreover, we show amongst other things that $G/N_{\Tr}(G)$ is almost SIN. In particular, if $G$ is connected, then $G/N_{\Tr}(G)= V \times K$ where $V$ is a vector group and $K$ is compact.

The question of when a C*-algebra can be embedded into an AF-algebra has a long history. It can be traced back to Elloitt's original classification of AF-algebras. Early work in this direction includes Spielberg's proof that a separable, residually finite, type I C*-algebra can be embedded into an AF algebra  \cite{spiel}. Later Pimsner and Voiculescu \cite{PimVoi2} showed that the irrational rotation algebra is AF-embeddable, prompting   Effros \cite{Effros} to ask about a possible abstract classification of such embeddable algebras. 

Despite the considerable effort of many people, the question of  embeddability into a generic AF-algebra is difficult with as of yet no satisfactory general theory to rely on. At this point the working conjecture is that a C*-algebra is AF-embeddable if and only it is separable, exact, and quasidiagonal. In contrast, if we restrict our attention  to embeddability into a unital, simple AF algerba, we can take advantage of recent deep work of C. Schafhauser  \cite{schafhauser}. Amongst other things, Schafhauser showed that  if $A$ is a separable, exact C*-algebra which satisfies the Universal Coefficient Theorem (UCT) and has a faithful, amenable trace, then $A$ admits a trace-preserving embedding into a simple, unital AF-algebra with unique trace. In particular, for any countable, discrete, amenable group $G$, the reduced group C*-algebra $\cstar_r(G)$ admits a trace-preserving embedding into the universal UHF-algebra. For $G$ non-discrete,  I. Belti\c{t}\u{a} and D. Belti\c{t}\u{a} \cite{beltitab} show that if $G$ is connected, second countable and solvable, that embeddability into an AF-algebra is possible if and only if $G$ is abelian.

\subsection*{Notation and summary of results}

For a locally compact group $G$, we let
\begin{align*}
\pos(G)&=\{u:G\to\Cee\;|\;u\text{ is continuous and positive definite}\} \\
\pos_1(G)&=\{u\in\pos(G):u(e)=1\} \\
\tpos(G)&=\{u\in\pos_1(G):u(st)=u(ts)\text{ for each }s,t\text{ in }G\}
\end{align*}
denote, respectively, the {\it positive definite functions},
{\it states}, and the {\it traces}, each on $G$.  By a well-known correspondence
these sets may be identified with the positive functionals on the enveloping C*-algebra
$\cstar(G)$, respectively, the states, and the tracial states.  
See, for example \cite[\S 7.1]{pedersen}.

In Section \ref{sec:traker}, we introduce the trace kernel $N_\Tr$.  We place this
in context against two other kernels, the small-invariant neighbourhood kernel,
$N_\csin$; and the maximally almost periodic (MAP), or von Neumann, kernel, $N_\cmap$.
We introduce the new class of residually small invariant neighbourhood (SIN) groups
$[\crsin]$.  We show that this class properly contains the union of classes 
$[\cmap]\cup[\csin]$, and that $N_\Tr\subseteq N_\csin\subseteq N_\cmap$, with
$N_\Tr= N_\csin$ when $G$ is compactly generated.  We also show that
$G/N_\Tr$ is almost-SIN, and hence quasi-SIN.

In Section \ref{sec:congro}, we apply results from the previous section to learn about the
structure of $N_\Tr$ and of $G/N_\Tr$ for connected groups, with a primary focus on connected Lie groups.

Sections \ref{sec:traker} and \ref{sec:congro} contain many examples that illustrate
the complexities and limitations of the results.

In Section \ref{sec:redtra}, we consider reduced traces, those on the reduced group 
C*-algebra.  Using some observations from Section \ref{sec:traker}, we simplify  the proof
from our preprint \cite{forrestsw}
that for compactly generated groups, the existence of a reduced trace is equivalent
to $G$ admitting an open normal amenable subgroup.  As previously mentioned, this fact was simultaneously shown
in \cite{kennedyr} without the compact generation assumption.  Our methods complement theirs and both can be used in tandem to learn about the structures of reduced traces
in many examples.  For example, we can produce examples of non-discrete 
groups admitting a unique reduced trace.

In Section \ref{sec:ametra} we consider amenable traces.  We give a (mostly)
self-contained function-theoretic approach to their definition.  We introduce
the amenable trace kernel $N_{\mathrm{amTr}}$.  If $G$ has property (T),
then $N_{\mathrm{amTr}}$ coincides with the von Neumann kernel $N_\cmap$, hence
$G$ is amenably tracially separated exactly when it is maximally almost periodic.  We then proceed
to examine the relationship of amenable trace separation with the factorization property
of Kirchberg \cite{kirchberg}.  In particular, we learn that for totally disconnected groups
amenable trace separation implies the factorization property.  Finally, we
use amenable traces on the reduced C*-algebra to gain a simple proof of the 
characterization,
first due to Ng \cite{ng}, that $G$ is amenable if and only if $\cstar_r(G)$ is nuclear and admits a trace.

In Section 6, we turn our attention to studying structural properties of group C*-algebras, in particular embeddability into simple, unital AF algebras. Appealing to deep results arising from classification theory of C*-algebras, we show the reduced group C*-algebra of a second countable locally compact group embeds inside of a simple, unital AF algebra if and only if the group is amenable and the tracial kernel $N_{\Tr}$ is trivial. As a consequence, we find the reduced C*-algebras of tracially separated, amenable groups are quasidiagonal. We additionally demonstrate that the full C*-algebra of a non-compact property (T) group cannot be embedded inside of a unital, simple AF algebra.

\subsection{First observations.}
We note two obvious cases:
\begin{align}\label{eq:abelcom}
&G\text{ abelian: } && \tpos(G)=\pos_1(G)\cong \mathrm{Prob}(\what{G}) \\
&G\text{ compact: } && 
\tpos(G)=\wbar{\mathrm{conv}\left\{\frac{1}{d_\pi}\chi_\pi:\pi\in\what{G}\right\}} \notag
\end{align}
where $\what{G}$ denotes the dual object, a locally compact group in the abelian case,
and in the compact case a full set of irreducible representations, each
of necessarily finite dimension $d_\pi$ and with character $\chi_\pi=\Tr\circ\pi$.

If $N$ is a closed normal subgroup of $G$, we let $q_N:G\to G/N$ denote the quotient map.

Given $u\in\pos_1(G)$ we let $(\pi_u,\fH_u,\xi)$ denote the 
Gelfand-Naimark-Segal (GNS) representation
associated with $u$, so $u=\langle\pi_u(s)\xi|\xi\rangle$. 
We record an observation that is well-known to specialists.

\begin{lemma}\label{lem:kernel}
\begin{itemize}
\item[(i)] If $u\in \pos_1(G)$ then $N_u=u^{-1}(\{1\})$ is a closed subgroup of $G$,
$u$ is constant on double cosets $N_utN_u$, and $\ker\pi_u=\bigcap_{t\in G}tN_ut^{-1}$.
\item[(ii)] If $u\in\tpos(G)$ then $N_u=\ker\pi_u$ is normal, and 
$u\in\tpos(G/N_u)\circ q_{N_u}$.
\end{itemize}
\end{lemma}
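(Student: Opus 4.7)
The plan is to work throughout with the GNS data $(\pi_u,\fH_u,\xi)$, where $\xi$ is a unit cyclic vector with $u(s)=\langle\pi_u(s)\xi|\xi\rangle$, and reduce both parts to simple Hilbert space computations.

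For part (i), the key observation is that $s\in N_u$ if and only if $\pi_u(s)\xi=\xi$. Indeed, both $\pi_u(s)\xi$ and $\xi$ are unit vectors, so the Cauchy--Schwarz equality condition forces the inner product to equal $1$ precisely when the vectors agree. From this characterization $N_u$ is visibly closed under products and inverses (using $\pi_u(s^{-1})=\pi_u(s)^{-1}$) and is closed as the preimage of $\{1\}$ under a continuous map. For double-coset invariance, given $s,s'\in N_u$ and $t\in G$, I would write
\[
u(sts')=\langle\pi_u(t)\pi_u(s')\xi\,|\,\pi_u(s)^*\xi\rangle=\langle\pi_u(t)\xi\,|\,\xi\rangle=u(t),
\]
using $\pi_u(s)^*\xi=\pi_u(s^{-1})\xi=\xi$. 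Finally, for the identification of $\ker\pi_u$, I would exploit cyclicity of $\xi$: since $\{\pi_u(t)\xi:t\in G\}$ has dense linear span in $\fH_u$, the condition $\pi_u(s)=I$ is equivalent to $\pi_u(s)\pi_u(t)\xi=\pi_u(t)\xi$ for every $t\in G$, which (applying $\pi_u(t)^{-1}=\pi_u(t^{-1})$ to both sides) is equivalent to $\pi_u(t^{-1}st)\xi=\xi$ for every $t$, i.e.\ $s\in\bigcap_{t\in G} tN_ut^{-1}$.

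For part (ii), I would first note that the trace identity applied with the pair $(t,\,st^{-1})$ gives $u(tst^{-1})=u(st^{-1}\cdot t)=u(s)$ for all $s,t\in G$. Hence $N_u$ is normal, and by (i) the intersection defining $\ker\pi_u$ collapses to $N_u$ itself. Since $u$ is constant on cosets of $N_u$ (a special case of double-coset invariance, or simply because $N_u$ is now normal), it factors as $u=\dot u\circ q_{N_u}$ for a well-defined continuous function $\dot u$ on $G/N_u$. A routine check confirms that $\dot u$ is positive definite, normalized, and satisfies the trace identity on the quotient, using that $q_{N_u}$ is a surjective homomorphism and that the corresponding identities hold for $u$ on $G$.

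I do not expect a real obstacle here; the content is essentially the Cauchy--Schwarz equality case plus the cyclicity argument, and the only place where the trace hypothesis in (ii) is used is to push the normality of $N_u$ across, which is immediate from the trace identity itself.
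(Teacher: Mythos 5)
Your proposal is correct and follows essentially the same route as the paper: the characterization $N_u=\{s:\pi_u(s)\xi=\xi\}$ (your Cauchy--Schwarz equality argument is the paper's ``uniform convexity''), the cyclicity argument identifying $\ker\pi_u$ with $\bigcap_{t\in G}tN_ut^{-1}$, and normality of $N_u$ from the trace identity followed by factoring through the quotient. No issues.
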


\begin{proof}
{\bf (i)} By uniform convexity
$N_u=\{s\in G;\pi_u(s)\xi=\xi\}$, which is easily shown to be a subgroup.  Furthermore
$\pi$ is constant on double cosets $N_utN_u$.  If $s\in \bigcap_{t\in G}tN_ut^{-1}$ 
and $t\in G$ then $\pi_u(s)\pi_u(t)\xi=\pi_u(t)\pi_u(t^{-1}st)\xi=\pi_u(t)\xi$,
and cyclicity of $\xi$ provides that $\pi_u(s)=I$, so $N_u\subseteq\ker\pi_u$.
The converse inclusion is obvious.

{\bf (ii)} The trace condition provides normality of $N_u$.  Hence
$N_u=\ker\pi$.  Furthermore, $\pi$
is constant on cosets of $N_u$ and hence defines a representation $\tilde{\pi}$
of $G/N_u$.  Then $u=\langle\tilde{\pi}_u\circ q_{N_u}(\cdot)\xi,\xi\rangle$. \end{proof}

\section{The tracial kernel}\label{sec:traker}
Let $G$ be a locally compact group.
We define the {\it tracial kernel} of $G$ by
\[
N_\Tr=N_\Tr(G)=\bigcap_{u\in\tpos(G)}N_u.
\]
We say that $G$ is {\it tracially separated} if $N_\Tr=\{e\}$.
It is evident that
\begin{equation}\label{eq:tracered}
\tpos(G)=\tpos(G/N_\Tr)\circ q_{N_\Tr}
\end{equation}
and that  $G/N_\Tr$ is the maximal tracially separated quotient of $G$.

\subsection{$\cp$-kernels.}
We first wish to position the trace kernel in relation to other classes of kernels.
Let $[\cp]$ be a class of locally compact groups.  For example we let
$[\cmap]$ denote the class of {\it maximally almost periodic} groups and
$[\csin]$ that of {\it small invariant neighbourhood} groups.  We let
$\fN_\cp(G)$ denote the set of ``co-$\cp$" closed normal subgroups $N$ in $G$,
i.e.\ those for which $G/N\in[\cp]$, and define the $\cp$-kernel by
\[
N_\cp=N_\cp(G)=\bigcap_{N\in\fN_\cp(G)}N.
\]
The most well-known example is the {\it von Neumann kernel}, $N_{\cmap}$.
Furthermore, $G$ is called {\it minimally almost periodic} if $N_\cmap=G$ or, equivalently, if the trivial representation is the unique irreducible finite dimensional representation of $G$.

We do not necessarily have that $G/N_\cp\in[\cp]$, but $G/N$ is {\it residually-}$\cp$, i.e.\
the intersection of co-$\cp$ subgroups.  In fact, if $N$ is a residually-$\cp$ closed normal
subgroup then
\[
N=\bigcap_{M\in\fN_\cp(G/N)}(q_M\circ q_N)^{-1}(M)\subseteq N_\cp(G)
\]
so {\it $N_\cp$ is the smallest closed normal subgroup for which $G/N_\cp$ is
residually-$\cp$}.  If we let $[\crp]$ denote the class of residually-$\cp$ groups,
then $N_{\crp}=N_\cp$.  In particular, we have an apparently novel class $[\crsin]$.

\begin{theorem}
We have inclusion of classes $[\cmap]\subseteq[\crsin]$.
\end{theorem}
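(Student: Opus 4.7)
The plan is to prove the stronger inclusion of kernels $N_\csin\subseteq N_\cmap$, from which $[\cmap]\subseteq[\crsin]$ is immediate: if $G\in[\cmap]$ then $N_\cmap=\{e\}$ forces $N_\csin=\{e\}$, so $G$ is residually-$\csin$.

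Fix $g\in G\setminus N_\cmap$; by definition of the von Neumann kernel there is a continuous finite-dimensional unitary representation $\pi:G\to\Un(n)$ with $\pi(g)\neq I$. Set $N=\ker\pi$, a closed normal subgroup of $G$ with $g\notin N$. It suffices to show that $G/N$ is a SIN group, for then $N\in\fN_\csin(G)$ and hence $g\notin N_\csin$. This reduces the theorem to an auxiliary claim: every locally compact group $H$ admitting a continuous injective homomorphism $\iota:H\to\Un(n)$ is SIN. The theorem follows by applying the claim to $H=G/N$ with $\iota=\bar\pi$ the induced map.

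The auxiliary claim would be established in three steps. First, because $\Un(n)$ has no small subgroups and $\iota$ is injective, $H$ itself has no small subgroups, so by the solution to Hilbert's fifth problem (Gleason-Montgomery-Zippin) $H$ is a Lie group and $\iota$ is smooth. Second, if $v\in\ker d\iota_e$ then $\iota(\exp(tv))=I$ for all $t\in\Ree$, which by injectivity of $\iota$ forces $\exp(tv)=e$ for all $t$ and hence $v=0$, so $d\iota_e\colon\mathfrak h\to\mathfrak u(n)$ is an injective Lie algebra homomorphism. Third, the $\mathrm{Ad}_{\Un(n)}$-invariant inner product $\langle X,Y\rangle=-\Tr(XY)$ on $\mathfrak u(n)$ restricts along $d\iota_e$ to an inner product on $\mathfrak h$, and this restricted form is $\mathrm{Ad}_H$-invariant by the intertwining $d\iota_e\circ\mathrm{Ad}_h=\mathrm{Ad}_{\iota(h)}\circ d\iota_e$. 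Integrating yields a bi-invariant Riemannian metric on $H$ whose open balls around $e$ are conjugation-invariant and form a basis of neighbourhoods of the identity, whence $H$ is SIN.

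The main obstacle I anticipate is that $\iota$ need not be a topological embedding: for example, $H=\Ree$ with $\iota$ an irrational winding into a maximal torus of $\Un(n)$ realises $H$ only as a dense non-closed immersed subgroup, so one cannot conclude that $H$ is SIN by viewing it as a topological subgroup of the SIN group $\Un(n)$. The point of working at the Lie-algebra level is precisely to bypass this pathology: even if the image $\iota(H)$ carries a strictly coarser topology as a subset of $\Un(n)$ than the intrinsic topology of $H$, the differential $d\iota_e$ is still genuinely injective and it intertwines the adjoint actions, so the $\mathrm{Ad}$-invariant bilinear form pulls back to give an intrinsic bi-invariant Riemannian structure on $H$ itself.
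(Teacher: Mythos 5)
Your argument is correct, and it takes a genuinely different route from the paper. Both proofs reduce to showing that $G/\ker\pi$ is SIN for every finite-dimensional unitary representation $\pi$ (equivalently, to the kernel inclusion $N_\csin\subseteq N_\cmap$, which the paper records as a consequence in (\ref{eq:kernels})), but the paper establishes this by structure theory: the Freudenthal--Weil decomposition $G_0\cong V\times K$, the Robertson--Wilcox splitting and finite-index centralizer results, and a Grosser--Moskowitz argument with compact open subgroups of $C/G_0$. You instead prove the clean auxiliary statement that a locally compact group with a faithful continuous finite-dimensional unitary representation is a SIN Lie group, via the no-small-subgroups case of Hilbert's fifth problem, injectivity of $d\iota_e$, and the pullback of the $\mathrm{Ad}$-invariant form $-\Tr(XY)$; your closing remark correctly isolates the one real danger (that $\iota$ need not be a topological embedding) and correctly explains why the Lie-algebra-level argument is immune to it, since $\mathrm{Ad}_h$ preserves the pulled-back inner product for \emph{every} $h\in H$, so the sets $\exp(B_r)$ are conjugation-invariant under all of $H$ and form a neighbourhood base of $e$. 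Your route is shorter and arguably more conceptual, and it still yields Corollary \ref{cor:tdmap} for free (a totally disconnected Lie group is discrete, so $\ker\pi$ is open); the cost is the appeal to Gleason--Montgomery--Zippin--Yamabe, a much heavier external input than anything the paper uses, whereas the paper's argument stays inside the standard toolbox of locally compact group structure theory. One small point worth a sentence in a write-up: you invoke ``by definition of the von Neumann kernel'' the classical description $N_\cmap=\bigcap_\pi\ker\pi$ over finite-dimensional unitary $\pi$, while the paper defines $N_\cmap$ as the intersection of co-$\cmap$ closed normal subgroups; these coincide (each $G/\ker\pi$ is MAP since $\pi$ induces an injective finite-dimensional representation of it, and conversely every co-MAP subgroup contains the joint kernel), but the identification should be stated.
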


\begin{proof}
Let $G\in[\cmap]$.  We shall show for each finite dimensional representation
$\pi:G\to\Un(d)$, that $G_\pi=G/\ker\pi\in[\csin]$.  As the intersection of such kernels
separates points of $G$, this will show that $G\in[\crsin]$.

The connected component $G_0\in[\cmap]$, and
the Fruedenthal-Weil Theorem (see \cite[12.4.8]{palmer} or \cite[16.4.4]{dixmier}) 
shows that $G_0\cong V\times K$
for a vector group $V$ and compact $K$.  Then $K$ is characteristic in $G_0$, hence
normal in $G$, while $V$ can arranged to be normal by 
Robertson and Wilcox \cite[Theo.\ 2]{robertsonw}.
Furthermore, \cite[Theo.\ 1]{robertsonw} shows that the centralizer $C=C_G(V)$
is of finite index, hence open, in $G$.  Notice that $G_0\subseteq C$.  
We first establish that the open image of $C$ in $G_\pi=G/\ker\pi$,
which is isomorphic to $C_\pi=C/\ker\pi|_C$, is in $[\csin]$.

We let $\fB$ be a neighbourhood base at the identity for $C/G_0$ consisting of 
compact open subgroups.  Each $H_B=q_{G_0}^{-1}(B)\subseteq C$ is almost connected, 
hence by \cite[(2.9)]{grosserm2} of the form $V\times K_B$,
which is a direct product as $V$ is central in $C$.  
Notice that each $K_B$ is open in $C$.
We have that $\bigcap_{B\in\fB}K_B
=K$, and each $\pi(K_B)$ is a compact Lie group.  For $B$ in $\fB$ we have
that $\pi(K_B)/\pi(K)$ is a Lie image of totally disconnected compact group, hence finite.
It follows that $\pi(K_B)=\pi(K)$ for $B$ in $\fB$ sufficiently small, and we fix such a $B$.
Then $\pi(K_B)=\pi(K)$, and hence $q_\pi(K)=q_\pi(K_B)$ is open in $C_\pi$. 
Now $q_\pi(K)$, being homeomorphic to the normal subgroup $\pi(K)$ in the compact
group $\wbar{\pi(C)}$ in $\Un(d)$, admits a base $\fC$ of $C_\pi$-invariant neighbourhoods
of the identity, while $q_\pi(V)$ is central in $C_\pi$. Consider the 
open subgroup of $C_\pi$ given by
\[
q_\pi(G_0)=q_\pi(V\times K)
=q_\pi(V)q_\pi(K). 
\]
Any neighbourhood $U$ of the identity in $q_\pi(G_0)$ admits $W$ in $\fC$
such that $\pi(W)\subseteq U$, and $(U\cap\pi(V))\cap\pi(W)$ is a
$C_\pi$-invariant neighbourhood of the identity.  In other words, $C_\pi\in[\csin]$.

Now $C_\pi$ is open and of finite index in $G_\pi$.  If $U$ is a relatively compact
conjugation-invariant neighbourhood of the identity in $C_\pi$, then
$\bigcap_{tC_\pi\in G_\pi/C_\pi}tUt^{-1}$ is a well-defined and
conjugation-invariant neighbourhood of the identity for $G_\pi$.  
It follows that $G_\pi\in[\csin]$.
\end{proof}

The following may be known, but may easily seen
from our proof.

\begin{corollary}\label{cor:tdmap}
If $G\in[\cmap]$ and totally disconnected, then $G$ is residually discrete.
If, additionally, $G$ is compactly generated
then $G$ is residually finite.
\end{corollary}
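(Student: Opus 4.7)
The plan is to specialize the proof of the preceding theorem to the totally disconnected setting, where the conclusion strengthens from ``$G_\pi=G/\ker\pi \in [\csin]$'' to ``$G_\pi$ is discrete''. For the second assertion, I would combine this with Mal'cev's theorem that every finitely generated linear group is residually finite.

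For the first statement, assume $G \in [\cmap]$ is totally disconnected. Then $G_0 = \{e\}$, so in the notation of the preceding proof $V = K = \{e\}$ and $C_G(V) = G$. Fix a finite-dimensional unitary representation $\pi:G\to\Un(d)$. Take a base $\fB$ of compact open subgroups at the identity; since $G_0 = \{e\}$, each $H_B$ from the earlier argument is simply $B$ itself, and $K_B = B$. The same Lie-image argument then applies verbatim: $\pi(B)$ is a compact Lie subgroup which, as a continuous image of the totally disconnected group $B$, is finite, and comparing to the trivial $\pi(K) = \{e\}$ we obtain $\pi(B) = \{e\}$ for $B$ sufficiently small. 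Hence $B \subseteq \ker\pi$, so $\ker\pi$ is open and $G/\ker\pi$ is discrete. Since $G$ is MAP, the family $\{\ker\pi\}$, as $\pi$ ranges over finite-dimensional unitary representations, intersects to $\{e\}$, proving residual discreteness.

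For the compactly generated case, observe that each $G/\ker\pi$ is discrete and compactly generated, hence finitely generated; via $\pi$ it embeds into $\Un(d) \subseteq \GL(d,\Cee)$. Mal'cev's theorem then gives that $G/\ker\pi$ is residually finite. Pulling back a descending family of finite-index normal subgroups through $q_{\ker\pi}$ yields a family of open finite-index normal subgroups of $G$ whose intersection equals $\ker\pi$. Intersecting over all finite-dimensional $\pi$ and invoking MAP one final time shows $G$ itself is residually finite.

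I do not anticipate a serious obstacle. The only point requiring care is a clean bookkeeping of the specialization of the preceding theorem's proof: confirming that $H_B = B$, that the ``Lie image of a totally disconnected compact group is finite'' step carries over without modification, and that $\pi(K_B) = \pi(K) = \{e\}$ forces $\ker\pi$ to be open rather than merely to give a SIN quotient. Once these are in place, the invocations of MAP and of Mal'cev are immediate.
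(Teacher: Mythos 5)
Your proposal is correct and follows essentially the same route as the paper: both specialize the proof of the preceding theorem to conclude that $\ker\pi$ contains a compact open subgroup (so each $G/\ker\pi$ is discrete), and then invoke Mal'cev for finitely generated linear groups in the compactly generated case. Your write-up merely makes explicit the bookkeeping ($H_B=B$, $\pi(K)=\{e\}$) and the final intersection argument that the paper leaves implicit.
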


\begin{proof}
Let $\pi:G\to \Un(d)$ be a representation.  The proof above 
shows that $\ker\pi$ contains some open subgroup $B$.
Hence if $G$ is compactly generated, then
$\pi(G)$ is finitely generated, and \cite{malcev,alperin} shows that
$\pi(G)$ is residually finite.
\end{proof}

We let $[\ck]$ denote the class of compact groups and we obtain inclusions:
\begin{equation}\label{eq:rsinworld}
\xymatrix{
[\ck] \ar[r]\ar[dr] & [\crk] \ar[r] & [\cmap]=[\mathrm{RMAP}] \ar[d] \\
& [\csin] \ar[r] & [\crsin].
}
\end{equation}
We get accordingly, the following containment of kernels:
\begin{equation}\label{eq:kernels}
N_{\csin}\subseteq N_{\cmap}\subseteq N_{\ck}.
\end{equation}
Proper inclusions for the top row in (\ref{eq:rsinworld})
are shown, respectively, by 
$\Ree$, and the divisible discrete group $\Que$.
The following construction can be fine-tuned to create 
examples which show that no inclusion of classes is omitted in (\ref{eq:rsinworld}).

\begin{example}\label{ex:gammaF}
We consider a sequence of pairs $(\Gamma_n,F_n)$ where each $\Gamma_n$ is discrete,
and each $F_n$ is a finite group of automorphsims which contains one element
$\alp_n$ for which the commutator $\{s\alp_n(s^{-1}):s\in\Gamma_n\}$ is infinite.
Hence in each semi-direct product $\Gamma_n\rtimes F_n$, there is an element
with infinite conjugacy class.  Let
\[
\Gamma=\bigoplus_{n=1}^\infty\Gamma_n,\quad
F=\prod_{n=1}^\infty F_n\quad\text{and}\quad G=\Gamma\rtimes F
\]
where $\Gamma$ is restricted direct product (catagorical direct sum), and the product
group $F$ is compact and acts on $\Gamma$ coordinate-wise.  It is then easy to see that
\begin{itemize}
\item each neighbourhood of the identity on $G$ contains a neighbourhood
$\{e\}\rtimes\prod_{n=m}^\infty F_n$ and hence
an element with infinite conjugacy class in a discrete subgroup, so $G\not\in[\csin]$; and
\item each normal subgroup $N_m=\left(\bigoplus_{n=m}^\infty\Gamma_n\right)\rtimes
\left(\prod_{n=m}^\infty F_n\right)$ has discrete quotient, so $G\in[\crsin]$.
\end{itemize}

{\bf (i)} The matrix $\begin{bmatrix} 0 & -1 \\ 1 & 0\end{bmatrix}$ is 
of order $4$ but squares to a central element, and is easily checked
the have an infinite conjugacy class in $\SL_2(\Zee)$, thus in $\SL_2(\Que)$, 
and hence its inner automorphism $\alp$ has infinite commutator.  
We form $G$, as above, with choices $\Gamma_n=\SL_2(\Que)$ and 
$F_n=\langle\alp_n\rangle$, where $\alp_n=\alp$.  
Since $G$ contains (closed) copies of $\SL_2(\Que)$, which is minimally almost periodic
(as first shown by von Neumann and Wigner in  \cite{vonneumannw}),  $G\not\in[\cmap]$.  

{\bf (ii)} If in contrast to above, we let each $\Gamma_n=\SL_2(\Zee)$ instead of $\SL_2(\Que)$ , then
$\alp$-invariant congruence subgroups of $\SL_2(\Zee)$ show that
each $\SL_2(\Zee)\rtimes\langle\alp\rangle$ is residually finite.  We obtain that
$G\in[\crk]$.  

{\bf (iii)}  The classical example of Murakami
\cite{murakami} (also \cite[12.6.4]{palmer}), with
each $\Gamma_n=\Zee$ and $F_n=\{1,\alp\}$ 
with $\alp(n)=-n$, is also in $[\crk]\setminus [\csin]$ and is, furthermore, amenable.
If here, we instead let each $\Gamma_n=\Que$ we get a group in
$[\cmap]\setminus([\crk]\cup[\csin])$.
\end{example}

We now position our trace kernel with respect to the kernels of (\ref{eq:kernels}),
and gain information about the size of $\tpos(G)$ in the process.  We shall
say that $\tpos(G)$ is {\it infinite dimensional}, if it contains an infinite linearly independent
set.

\begin{proposition}\label{prop:trasin}
\begin{itemize}
\item[(i)] We have that $N_\Tr\subseteq N_{\csin}$.  
\item[(ii)] If $N_\csin$  is non-open, then $\tpos(G)$ is infinite dimensional.
\item[(ii$'$)] If $N_\cmap$ is of infinite index in $G$, then
$\tpos(G)$ is infinite dimensional.
\end{itemize}
\end{proposition}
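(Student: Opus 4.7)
For part (i), I plan to show that $N_\Tr \subseteq M$ for every closed normal $M \subseteq G$ with $G/M \in [\csin]$; intersecting over such $M$ then yields $N_\Tr \subseteq N_\csin$. The key observation is that traces separate points on any SIN group $K$: given $s \in K$ with $s \neq e$, SIN combined with Hausdorffness allows one to pick a conjugation-invariant open neighbourhood $V$ of $e$ with $s \notin VV^{-1}$, and one sets
\[
v_V(t) := \langle\lambda(t)\mathbf{1}_V,\mathbf{1}_V\rangle/\|\mathbf{1}_V\|_2^2 = |tV \cap V|/|V|
\]
with respect to Haar measure (SIN groups being unimodular). Then $v_V$ is continuous, normalized, positive definite as a matrix coefficient, and the centrality identity $v_V(gsg^{-1}) = v_V(s)$ follows from conjugation-invariance of $V$ combined with unimodularity (via the substitution $r \mapsto grg^{-1}$). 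Since $s \notin VV^{-1}$, $v_V(s) = 0$. Applied to $K = G/M$ and pulled back through $q_M$, this produces, for any $s \notin M$, a trace on $G$ distinguishing $s$ from $e$.

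For part (ii), since $\tpos(G) \supseteq \tpos(H) \circ q_{N_\csin}$ with $H := G/N_\csin$ non-discrete and satisfying $N_\csin(H) = \{e\}$, it suffices to find infinitely many linearly independent traces on $H$. I split into two cases depending on whether some co-SIN subgroup of $H$ is non-open. In Case A there is a non-open co-SIN $M \subseteq H$, so $K := H/M$ is non-discrete SIN. I inductively construct $V_1 \supsetneq V_2 \supsetneq \cdots$ of open conjugation-invariant neighbourhoods of $e$ in $K$ together with points $s_n \in V_n \setminus \{e\}$ satisfying $s_n \notin V_{n+1}V_{n+1}^{-1}$: at step $n$, choose $s_n \in V_n \setminus \{e\}$ (possible by non-discreteness), then use SIN and continuity of multiplication to pick a conjugation-invariant $V_{n+1} \subseteq V_n$ with $V_{n+1}V_{n+1}^{-1}$ avoiding $s_n$. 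Setting $u_n := v_{V_n}$, the evaluation matrix $A_{nm} := u_m(s_n)$ is lower-triangular with strictly positive diagonal: for $m \le n$, $V_m \supseteq V_n$ forces $s_n \in V_mV_m^{-1}$, giving $u_m(s_n) > 0$; for $m > n$, $V_mV_m^{-1} \subseteq V_{n+1}V_{n+1}^{-1}$ misses $s_n$, so $u_m(s_n) = 0$. Hence the $u_n$ are linearly independent.

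In Case B every co-SIN subgroup of $H$ is open, so $H$ is residually discrete. Non-discreteness of $H$ implies every open normal subgroup is non-trivial, so I iteratively build open normal subgroups $N_1 \supsetneq N_2 \supsetneq \cdots$ with $s_n \in N_n \setminus N_{n+1}$: pick $s_n \in N_n \setminus \{e\}$, find an open normal $N'$ with $s_n \notin N'$ (available by residual discreteness), and set $N_{n+1} := N_n \cap N'$. Each $\mathbf{1}_{N_n}$ is a continuous trace on $H$: continuous since $N_n$ is clopen, central since $N_n$ is normal, and positive definite as the pullback through $q_{N_n}$ of the matrix coefficient $\delta_e$ of the regular representation of the discrete quotient $H/N_n$. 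A linear relation $\sum_{n=1}^N c_n\mathbf{1}_{N_n} = 0$ evaluated at $s_m$ telescopes to $\sum_{n \le m} c_n = 0$, forcing all $c_n = 0$ by induction on $m$.

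For part (ii'), $H' := G/N_\cmap$ is an infinite MAP group, so it embeds densely in its Bohr compactification $b(H')$, which is therefore an infinite compact group with infinitely many distinct irreducible characters. By \eqref{eq:abelcom}, the normalized characters $\chi_\pi/d_\pi$ for $\pi \in \widehat{b(H')}$ lie in $\tpos(b(H'))$ and are linearly independent; their pullbacks through $H' \hookrightarrow b(H')$ and $q_{N_\cmap}$ produce infinitely many linearly independent traces in $\tpos(G)$. I expect the main technical hurdle to lie in Case A of (ii), where the nested choice of neighbourhoods $V_n$ and witness points $s_n$ must be orchestrated so that the evaluation matrix becomes lower-triangular; once that is properly set up, the SIN trace construction from (i) carries the rest of the argument.
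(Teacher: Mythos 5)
Your proposal is correct and follows essentially the same route as the paper: the normalized matrix coefficients $m(sV\cap V)/m(V)$ of conjugation-invariant neighbourhoods in SIN quotients for (i) and the non-discrete case of (ii), indicator functions of a strictly decreasing chain of open normal subgroups in the residually discrete case of (ii), and passage to the almost periodic (Bohr) compactification for (ii$'$). The only cosmetic difference is that in (ii$'$) you use the linearly independent normalized irreducible characters of the compactification, whereas the paper reuses the neighbourhood traces $u_{U_n}$ there; both work, and your filled-in triangular evaluation matrix in (ii) just makes explicit the linear independence the paper asserts.
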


\begin{proof}
{\bf (i)} Let $q:G\to H$ be a quotient  homomorphism where $H\in[\csin]$. 
Let $\lam:H\to\Un(\bl^2(H))$ denote the left regular
representation.  Given any $r\in H\setminus \ker\vphi$ there is a conjugation-invariant
symmetric neighbourhood $U$ of $e$ in $H$ for which $q(r)\not\in U^2$.  Then
for $s$ in $H$
\[
u_U(s)=\frac{1}{m(U)}\langle \lam(s)1_U|1_U\rangle=\frac{m(sU\cap U)}{m(U)}
\]
defines a trace, since $H$ is unimodular and
$t^{-1}(tsU\cap U)t=sUt\cap t^{-1}Ut=stU\cap U$ for $s,t$ in $H$.  Also
$u_U\circ q(r)=0\not=1$.  Hence $\ker q\supseteq N_\Tr$.

{\bf (ii)} First, suppose there is $N$ in $\fN_\csin$ be non-open, so $G/N$ is a non-discrete
$\csin$-group.  Then there is a sequence $U_1,U_2,\dots$ of
relatively compact symmetric open neighbourhoods of the identity
in $G/N$ for which $\wbar{U_{k+1}^2}\subsetneq U_k$.  Then
the set of elements $u_{U_n}\circ q_N$, as above, is linearly independent.

If each element $N$ in $\fN_\csin$ is open, but $N_\csin$ is not open, then
we may find a strictly decreasing sequence $N_1,N_2,\dots$
of elements of open normal subgroups.  Then $1_{N_1},1_{N_2},\dots$ is 
 a linearly independent sequence in $\tpos(G)$.

{\bf (ii$'$)} We have that $Q=G/N_\cmap$ is infinite and maximally almost periodic,
since it is residually maximally almost periodic.
Then the almost periodic compactification $Q^{ap}$ is infinite.
Let $\iota:Q\to Q^{ap}$ be the compactification map.
With $U_1,U_2,\dots$ chosen in $Q^{ap}$ as above, the set of elements 
$u_{U_n}\circ\iota\circ q_{N_\cmap}$
is linearly independent.
\end{proof}

\subsection{The role of compact generation.}
The next observation is of fundamental importance.  The main aspect
of the argument is very similar to one partially attributed by Hofmann and Mostert
\cite[Prop.\ 12.2]{hofmannm} to Freudenthal  (see
\cite[12.4.16]{palmer}, for example), where it is shown for compactly generated
$G$ that $G\in[\cmap]$ implies that $G\in[\csin]$.

\begin{proposition}\label{prop:comgen}
Suppose that $G$  is compactly generated, and
admits a continuous homomorphism $\vphi:G\to H$, where $H$
is a topological group admitting a family of conjugation invariant open
sets $\fB$ with $\bigcap_{B\in\fB}\wbar{B}=\{e\}$.
Then $G\in[\csin]$.

In particular if $G$ is compactly generated we have
\begin{itemize}
\item[{\bf (i)}] for each $u$ in $\tpos(G)$ that $N_u\in\fN_\csin$; and
\item[{\bf (ii)}] if $G\in[\crsin]$, then $G\in[\csin]$.
\end{itemize}
\end{proposition}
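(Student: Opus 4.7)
The plan is to adapt the Freudenthal--Mostert argument cited above (compactly generated MAP groups are $\csin$) to the more general hypothesis involving $\vphi$ and $\fB$. Given an open neighborhood $U$ of $e$ in $G$, I aim to produce a conjugation-invariant open neighborhood contained in $U$.

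First, compact generation provides a compact symmetric neighborhood $K$ of $e$ generating $G$; by shrinking $U$ I may assume $U \subseteq K$, so that $D = K \setminus U$ is a compact subset of $G$ with $e \notin D$, and $\vphi(D)$ is compact in $H$. The effective content of the hypothesis, as it is used in applications (i) and (ii), is that $\vphi$ together with $\fB$ separates points of $G$: for each $g \in D$ there is $B_g \in \fB$ with $\vphi(g) \notin \wbar{B_g}$. Compactness of $\vphi(D)$ gives finitely many $B_1,\dots,B_n \in \fB$ with $\vphi(D) \cap \bigcap_{i=1}^n \wbar{B_i} = \emptyset$. Their finite intersection $B = \bigcap_i B_i$ is open and conjugation-invariant (intersection of such), so $V_0 = \vphi^{-1}(B)$ is a conjugation-invariant open neighborhood of $e$ in $G$ satisfying $V_0 \cap K \subseteq U$.

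The subtle point---and the main obstacle I anticipate---is to replace $V_0$ by a conjugation-invariant open neighborhood actually contained in $U$, rather than merely having $V_0 \cap K \subseteq U$. This is where compact generation genuinely enters, in the manner of Freudenthal: writing $G = \bigcup_{n \ge 1}K^n$, one iteratively refines the choice of $B$, using conjugation-invariance at each stage, so that the resulting preimage no longer escapes $K$. The argument is sensitive to the geometry of $G$: for locally connected $G$ one can pass to the connected component of $e$ within $V_0$ (itself conjugation-invariant, and confined inside a compact neighborhood $K' \subseteq U$ once the boundary $\partial K'$ is avoided), while the totally disconnected case uses van Dantzig's theorem together with the open normalizer of a compact open subgroup.

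With the proposition established, the consequences follow: (i) For $u \in \tpos(G)$, apply the proposition to $G/N_u$ (compactly generated as a quotient of $G$) via the GNS representation $\pi_u \colon G/N_u \to \Un(\fH_u)$, which is faithful. The target admits the conjugation-invariant family $\{T : \|T - I\|_2 < \eps\}_{\eps > 0}$, defined using the tracial $2$-norm on the von Neumann algebra generated by $\pi_u(G)$; faithfulness of the trace gives that the closures intersect to $\{I\}$. Hence $G/N_u \in [\csin]$, i.e., $N_u \in \fN_\csin$. (ii) If $G \in [\crsin]$, a separating family $\{N_\alpha\}$ of co-$\csin$ closed normal subgroups yields an injective continuous diagonal map $G \to \prod_\alpha G/N_\alpha$; the product carries a conjugation-invariant family (products of conjugation-invariant bases of the $\csin$ factors) whose closures meet in $\{e\}$, and the proposition delivers $G \in [\csin]$.
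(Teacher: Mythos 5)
Your reductions (i) and (ii) are essentially the paper's: the GNS image with the tracial vector state (your $\|\cdot\|_2$-balls are interchangeable with the paper's sets $\{x:|\langle x\xi|\xi\rangle-1|<\eps\}$, whose conjugation-invariance comes from traciality and whose closed intersection is trivial because the GNS vector of a trace is separating), and the diagonal embedding into $\prod_N G/N$ with basic conjugation-invariant open sets. Those parts are fine, modulo specifying the weak operator topology on the unitary image.

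The main assertion, however, is not proved, and you say so yourself: you reach a conjugation-invariant open set $V_0=\vphi^{-1}(B)$ with $V_0\cap K\subseteq U$ and then identify the passage to a conjugation-invariant neighbourhood inside $U$ as "the main obstacle I anticipate." The sketch you offer in its place does not close the gap: the dichotomy between locally connected and totally disconnected groups does not exhaust locally compact groups, the totally disconnected route via normalizers of compact open subgroups does not produce conjugation-invariant sets (normalizers need not be all of $G$), and the "iterative refinement of $B$" is not carried out. The actual resolution is a small but essential change to your own setup: run the compactness/covering argument over $K^3\setminus U$ rather than $K\setminus U$ (with $U\subseteq K$), so that the resulting $B\in\fB$ satisfies $\vphi^{-1}(B)\cap K^3\subseteq U\subseteq K$. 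Then $U'=\vphi^{-1}(B)\cap K$ is an open neighbourhood of $e$ contained in $U$, and for $x\in K$ one has $xU'x^{-1}\subseteq \vphi^{-1}(B)\cap K^3\subseteq U\subseteq K$, hence $xU'x^{-1}\subseteq U'$; induction on word length in $G=\bigcup_{n\geq 1}K^n$ then gives invariance under conjugation by all of $G$. This is the Freudenthal trick the paper uses, and it is the entire content of the proposition; without it your argument establishes only that $G$ has arbitrarily small open sets whose intersection with a fixed compact generating set is conjugation-controlled, which is not the SIN property. (Separately, both your covering step and the paper's tacitly require $\vphi$ to separate points of $G$ from $e$ — otherwise $\ker\vphi\subseteq\bigcap_{B\in\fB}\vphi^{-1}(\wbar{B})$ obstructs the cover — so that hypothesis should be made explicit, as it indeed holds in applications (i) and (ii).)
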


\begin{proof}
The assumption of compact generation provides a compact symmetric
neighbourhood $K$ of $e$ in $G$ for which $G=\bigcup_{n=1}^\infty K^n$.
Let $W$ be an open neighbourhood of $e$ with $W\subseteq K^3$.
We have $K^3\setminus W\subseteq
\bigcup_{B\in\fB}(G\setminus \vphi^{-1}(\wbar{B}))$, and hence 
$K^3\setminus W\subseteq G\setminus \vphi^{-1}(\wbar{B})\subseteq
G\setminus \vphi^{-1}(B)$ for some $B$.  Let
$U=K^3\cap \vphi^{-1}(B)\subseteq W$.
If $x\in K$ we have on one hand that $xUx^{-1}\subseteq\vphi^{-1}(B)$, and
on the other that $xUx^{-1}\subseteq K^3$, and hence
$xUx^{-1}\subseteq U$.  As $W$ is arbitrarily small, and $K$ generates
$G$, this construction provides
a conjugation invariant base of neighbourhoods of $e$, i.e.\
 $G\in[\csin]$.

{\bf (i)} We let $(\pi_u,\fH_u,\xi)$ be the GNS representation associated with $u$. Consider the group of unitaries $H=\pi_u(G)$ which  is a topological
group in the relativized weak operator topology (equivalently, strong operator topology).  
The trace condition shows that for $\eps>0$ that each $B_\eps=
\{x\in H:|\langle x\xi|\xi\rangle-1|<\eps\}$, so $\wbar{B_\eps}\subseteq
 \{x\in H:|\langle x\xi|\xi\rangle-1|\leq\eps\}$.
As in Lemma \ref{lem:kernel}
we see that $\bigcap_{\eps>0}\wbar{B_\eps}=\{I\}$.  Let $\vphi=\til{\pi}_u:G/N_u\to H$.

{\bf (ii)} Here we let $H=\prod_{N\in\fN_{\csin}(G)}G/N$ be the product topological
group; $\fB$ be the family of open neighbourhoods of $e$ which are proper 
conjugation-invariant
sets on a finite set of coordinates, and the whole group on the remaining ones; and
$\vphi:G\to H$ be given by $\vphi(s)=(q_N(s))_{N\in\fN_{\csin}(G)}$.
\end{proof}

\begin{remark}
Conclusion (i) can fail if $G$ is not compactly generated.
Consider $G=(\bigoplus_{n=1}^\infty\Gamma_n)\rtimes\prod_{n=1}^\infty F_n$,
as in Example \ref{ex:gammaF}.  Each quotient $\Gamma_n\rtimes F_n$
admits trace $1_{\{(e,e)\}}$, so $u((x_n,\sigma_n)_{n=1}^\infty)
=\sum_{n=1}^\infty\frac{1}{2^n}1_{\{(e,e)\}}(x_n,\sigma_n)$ defines a trace
with $N_u=\{e\}$.
\end{remark}

Proposition \ref{prop:comgen} shows that
for compactly generated groups we get containment diagram
\begin{equation}\label{eq:cgclass}
[\crk]\longrightarrow
[\cmap]\longrightarrow [\csin]=[\crsin]
\end{equation}
If $G$ is either connected, or totally disconnected, then $G\in[\cmap]$ implies
that $G\in[\crk]$.  Indeed this follows from the Freudenthal-Weil Theorem and 
Corollary \ref{cor:tdmap}, respectively.  

\begin{example} 
The first two inclusions of (\ref{eq:cgclass}) remain proper
amongst compactly generated groups.

{\bf (i)} Let $C_3=\Zee/3\Zee$ act on $\Ree^2$, 
by the obvious rotations.  Any normal subgroup of $G=\Ree^2\rtimes C_3$
must intersect $\Ree^2$ as a $C_3$-invariant subgroup.  Any non-trivial
such subgroup must be dense in $\Ree^2$.  It follows that $N_\ck=\Ree^2$,
though this group is maximally almost periodic.

{\bf (ii)} Certain Baumslag-Solitar groups, such as  $G=BS(2,3)$, seem to be amongst the 
most famous examples of finitely generated, non-residually finite, or equivalently 
non-maximally almost periodic, groups  (e.g.\ Corollary \ref{cor:tdmap}).
In fact, in \cite{baumslags} it is indicated that for $G$ above
that the solvable group $G/G''$
also satisfies a condition which entails that it is not residually finite.

{\bf (iii)} In \cite[2.7]{decornulier} de Cornulier presents
a construction of a finitely generated
linear group with  a quotient by a certain proper subgroup of its centre that  is not 
residually finite.  This illustrates some limitations of \cite{leptinr} where it is shown that
a $\cmap$-group modulo its centre is in $[\cmap]$, and also of
\cite{malcev,alperin} that finitely generated linear groups are residually finite.
\end{example} 

\begin{corollary}\label{cor:comgen}
If $G$ is compactly generated, then $N_\Tr=N_\csin$.
\end{corollary}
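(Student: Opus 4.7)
The plan is essentially to observe that both inclusions $N_\Tr \subseteq N_\csin$ and $N_\csin \subseteq N_\Tr$ have already been established elsewhere in this section, and it remains only to assemble them.

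First, I would invoke Proposition \ref{prop:trasin}(i), which is stated for arbitrary locally compact $G$ (no compact generation needed) and gives the inclusion $N_\Tr \subseteq N_\csin$ directly. This is the ``easy'' direction in this setting, since it just says the trace kernel sits inside the SIN-kernel.

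For the reverse inclusion, I would appeal to Proposition \ref{prop:comgen}(i): under the hypothesis that $G$ is compactly generated, for every $u \in \tpos(G)$ the closed normal subgroup $N_u$ lies in $\fN_\csin(G)$, i.e.\ the quotient $G/N_u$ is a SIN group. Hence each $N_u$ is one of the subgroups appearing in the intersection defining $N_\csin$, so $N_\csin \subseteq N_u$. Taking the intersection over $u \in \tpos(G)$ yields
\[
N_\csin \subseteq \bigcap_{u \in \tpos(G)} N_u = N_\Tr.
\]
Combining with the first step gives equality, so the corollary is immediate.

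There is no real obstacle here; the work has been done in Propositions \ref{prop:trasin} and \ref{prop:comgen}. The one thing worth flagging is that the definition $N_\csin = \bigcap \{N : G/N \in [\csin]\}$ is a priori smaller than the trace kernel only because we can exhibit enough SIN quotients coming from GNS data, and this is precisely the content of Proposition \ref{prop:comgen}(i). Without compact generation this step fails, as illustrated by the Remark following Proposition \ref{prop:comgen}, so the hypothesis enters exactly at this point.
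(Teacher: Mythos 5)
Your proof is correct and is exactly the paper's argument: Proposition \ref{prop:trasin}(i) gives $N_\Tr\subseteq N_\csin$, and Proposition \ref{prop:comgen}(i) gives each $N_u\in\fN_\csin(G)$, hence $N_\csin\subseteq\bigcap_u N_u=N_\Tr$. Your added remark about where compact generation enters matches the paper's surrounding discussion.
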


\begin{proof}
Proposition \ref{prop:comgen} (i) shows that $N_\csin\subseteq N_\Tr$.
Combine with Proposition \ref{prop:trasin} (i).
\end{proof}

As of yet, we do not have an example of a tracially separated non-$\crsin$
locally compact group $G$, hence no example for which containment
$N_\Tr\subseteq N_\csin$ is proper.  Example \ref{ex:finiteindex}, below
will show for an infinite group that we can have $N_\Tr$ proper and of finite index,
and hence $\tpos(G)$ is finite dimensional.  
It would be interesting to know if there are any discrete $G$ for which
$\tpos(G)$ is non-trivial, but finite-dimensional.  A plausible candidate
is $G=\SL_2(\Que)$.

We say that $G$ is an {\em almost-$\csin$} group if there is a net 
$(U_\alp)_\alp$ of relatively compact
open sets, satisfying that $U_\alp\searrow\{e\}$ (i.e.\
is eventually contained in any neighbourhood of $e$), and
satisfies the asymptotic invariance property
\[
\lim_\alp\sup_{t\in K}\frac{m(tU_\alp t^{-1}\triangle U_\alp)}{m(U_\alp)}=0
\]
for each compact $K\subseteq G/N_\Tr$, where $m$ denotes Haar measure.
A mildly weaker form of this condition is noted by Stokke in \cite[\S 3]{stokke}, where 
sets $U_\alp$ are assumed to be of finite measure, but can be replaced by relatively
compact open sets thanks to regularity; also see Thom \cite[Rem. 3.1]{thom}.   In both of
the above references convergence is assumed to be pointwise, rather than
uniform on compact sets.

\begin{proposition}\label{prop:asin}
The group $G/N_\Tr$ is  almost-$\csin$.
\end{proposition}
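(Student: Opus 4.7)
The plan is to reduce via (2.1) to the case where $G$ itself is tracially separated, and then, for each pair $(K,W)$ consisting of a symmetric compact $K\subseteq G$ containing $e$ and a relatively compact open neighborhood $W$ of $e$, to construct a relatively compact open neighborhood $U=U_{(K,W)}\subseteq W$ of $e$ that is \emph{exactly} $K$-conjugation-invariant (so $tUt^{-1}=U$ for all $t\in K$). Indexing by $(K,W)$ with the natural partial order, the resulting net would then witness almost-$\csin$---in fact with the stronger conclusion that the symmetric-difference ratios vanish eventually on any given compact subset of $G$.

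To build $U$ I would proceed in three steps. \textbf{Step 1} (\emph{stability of a smaller $V$}): by continuity of conjugation, choose a relatively compact open $V\subseteq W$ containing $e$ with $KVK^{-1}\subseteq W$; since $K$ is symmetric, this gives $V\subseteq W\cap tWt^{-1}$ for every $t\in K$, so $V$ is disjoint from each symmetric difference $tWt^{-1}\triangle W$. \textbf{Step 2} (\emph{trace localization}): set $L:=KWK^{-1}\cup W$, which is compact; using tracial separation together with continuity of traces and a finite subcover argument over the compact set $L\setminus V$, produce finitely many traces $u_1,\dots,u_n\in\tpos(G)$ whose convex combination $u:=\frac{1}{n}\sum_i u_i$ (itself a trace) satisfies $\operatorname{Re}u\le 1-\delta$ on $L\setminus V$ for some $\delta>0$. \textbf{Step 3} (\emph{level set}): fix $\eta\in(0,\delta)$ and put
\[
U \;:=\; \{s\in W : \operatorname{Re}u(s)>1-\eta\}.
\]

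The key observation tying the steps together is that the unrestricted level set $U^\eta:=\{s\in G:\operatorname{Re}u(s)>1-\eta\}$ is $G$-conjugation-invariant, because the trace condition forces $u(tst^{-1})=u(s)$. Combined with $U^\eta\cap L\subseteq V$ (from the choice of $u$ and $\eta$) this gives, for each $t\in K$,
\[
tUt^{-1}\,\triangle\,U \;=\; U^\eta\cap(tWt^{-1}\triangle W) \;\subseteq\; V\cap(tWt^{-1}\triangle W) \;=\;\emptyset,
\]
the final equality coming from Step 1; hence $tUt^{-1}=U$. Also $U\subseteq U^\eta\cap L\subseteq V$, so $U$ is relatively compact, and $e\in U$ since $u(e)=1$.

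The main work is concentrated in Step 2, where one must simultaneously invoke tracial separation (to find, at each point of the compact set $L\setminus V$, a trace whose real part falls below $1$), continuity (to promote the pointwise inequality to an open cover), and the convexity of $\tpos(G)$ (to glue finitely many traces into a single trace with a \emph{uniform} spectral gap $\delta$). Once Step 2 is in hand, the rest is a purely geometric argument using only the conjugation-invariance of $u$ and the stability built into $V$, and no delicate measure estimate is required. As a byproduct this construction delivers something a priori stronger than the stated almost-$\csin$ property: for every compact $K\subseteq G/N_\Tr$, the group $G/N_\Tr$ admits a neighborhood basis of $e$ consisting of relatively compact open, $K$-conjugation-invariant sets.
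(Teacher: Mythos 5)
Your argument is correct, and it takes a genuinely different route from the paper's. The paper first observes that every compactly generated open subgroup $H$ of $G/N_\Tr$ is tracially separated, hence (by Proposition \ref{prop:comgen} and Corollary \ref{cor:comgen}) lies in $[\csin]$; given a compact $K$ and a neighbourhood $W$ it then takes $H=\langle WK\rangle$ and extracts an $H$-conjugation-invariant $U\subseteq W$. You instead bypass the compact-generation machinery entirely: the level set $U^\eta=\{s:\mathrm{Re}\,u(s)>1-\eta\}$ of a trace is globally conjugation-invariant, and by localizing the choice of $u$ to the compact set $L\setminus V$ (using tracial separation, continuity, and convexity of $\tpos(G/N_\Tr)$ to get a uniform gap $\delta$) you arrange $U^\eta\cap L\subseteq V$, whence $tUt^{-1}\triangle U=U^\eta\cap(tWt^{-1}\triangle W)=\varnothing$ for $t\in K$. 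This is in the spirit of the sets $B_\eps$ used in the proof of Proposition \ref{prop:comgen}(i), but applied directly, and like the paper's proof it yields exact $K$-invariance rather than merely asymptotic invariance; it is arguably more self-contained. Two small repairs: the condition $KVK^{-1}\subseteq W$ must be read as $\bigcup_{t\in K}tVt^{-1}\subseteq W$ (the triple product set contains $KK^{-1}$, so the literal reading is unachievable unless $K^2\subseteq W$; the conjugation-orbit version is what continuity of conjugation plus compactness of $K$ gives, and is all you use, since symmetry of $K$ then yields $V\subseteq W\cap tWt^{-1}$ for all $t\in K$); similarly $L$ should be taken as the closure of $\bigcup_{t\in K}tWt^{-1}\cup W$ to be genuinely compact. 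Neither affects the argument.
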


\begin{proof}
Every compactly generated open subgroup $H$ of $G/N_\Tr$ 
satisfies $\tpos(H)\subseteq\tpos(G/N_\Tr)|_H$, so $H\in[\csin]$, and hence $H$ is 
unimodular.

Let $W$ be a relatively compact open neighbourhood of $eN_\Tr$ and 
let $K\subseteq G/N_\Tr$ be compact.
Then $H=\langle WK\rangle\in[\csin]$.  Hence, there is an $H$-conjugation-invariant 
neighbourhood $U=U_{K,W}$ of $eN_\Tr$ contained in $W$.  It is now clear that we can build the desired net accordingly.
\end{proof}

Notice that the net $f_\alp=\frac{1}{m(U_\alp)}1_{U_\alp}$ shows that $G/N_\Tr$
enjoys the quasi-$\csin$ property of Losert and Rindler \cite{losertr}.  
It is noted in \cite{stokke} that the 
almost-$\csin$ condition implies unimodularity, and is equivalent
to a strong quasi-$\csin$ property:
$\bl^1(G)$ admits a bounded approximate identity consisting
of normalized indicator functions.

Summarizing results of
this section, we get the following implications for $G$:
\begin{align*}
[\cmap]\;&\Rightarrow \;[\crsin] \;\Rightarrow \;\text{tracial separation} \\
&\Rightarrow \;\text{unimodular \& almost-}\csin\;\Rightarrow\;
\text{quasi-}\csin.
\end{align*}
Non-abelian solvable connected groups, being amenable, are quasi-$\csin$,
but frequently non-unimodular, and always lack trace separation by Remark
\ref{rem:connected}, below.  We do not know if either of the middle two implications
are equivalences.

\section{Connected  groups}\label{sec:congro} 

Now let us suppose that $G$ is connected,
hence compactly generated.  Then Corollary \ref{cor:comgen} and
 Freudenthal-Weil Theorem \cite[12.4.8]{palmer} 
 tell us that
\begin{equation}\label{eq:trakercon}
N_\Tr=N_\csin=N_\cmap\text{ and }G/N_\Tr=V\times K
\end{equation}
where $V$ is a vector group and $K$ is compact.  Hence
$G$ admits a unique trace, i.e.\ $\tpos(G)=\{1\}$, if and only if it is 
{\it minimally almost periodic}, i.e.\ 
$N_\cmap=G$.  Notice that (\ref{eq:tracered}) and (\ref{eq:abelcom}) tell us that
\begin{equation}\label{eq:tracon}
\tpos(G)\cong\wbar{\mathrm{conv}}\left[\pos_1(V)\otimes 
\left\{\frac{1}{d_\pi}\chi_\pi:\pi\in\widehat{K}\right\}\right].
\end{equation}
Indeed, we consider the GNS triple $(\pi_u,\xi,\fH_u)$ of
$u$ in $\tpos(V\times K)$.  Factor $\pi_u(v,k)=\pi_u(v,e)\pi_u(0,k)$,
and then $\langle \pi_u(0,\cdot)\xi|\xi\rangle$ is in the norm-closed
convex hull of normalized characters on $K$.  The associated reducing projections
each commute with $\pi_u(v,e)$ for each $v$ in $V$.

We aim to get more precise descriptions of $V$ and $K$.

If $G$ is a connected Lie group, it admits a maximal solvable normal connected
subgroup $R$, which is always closed.  There is a semisimple connected subgroup
$S$, called a Levi complement, satisfies $G=RS$ and that $R\cap S$ is central
and discrete.  Though $S$ need not be closed, it admits a Lie group structure
by which the inclusion $S\hookrightarrow G$ is continuous.
Two Levi complements are conjugate by an element from the reductive radical 
$[R,G]$, which is connected and normal.  A Levi complement
$S$ contains a maximal connected normal compact subgroup $S_c$ which is the integral
subgroup of the compact ideal $\ls_c$ in its semisimple Lie algebra $\ls$.  Let
$S_{nc}$ denote the integral subgroup of the complementary ideal
of $\ls_c$ in $\ls$.  Then $S_{nc}$ commutes with $S_c$, the intersection
$S_{nc}\cap S_c$ is finite and central in $S$, and $S=S_cS_{nc}$.  Using (\ref{eq:trakercon}), and
in mildly differing notation, it is proved by Shtern \cite{shtern} that
\begin{equation}\label{eq:shtern}
N_\Tr=\wbar{[R,G]S_{nc}}.
\end{equation}
Notice that if $G$ is solvable, then $R=G$, $N_\Tr$ is the closure of the derived
subgroup, and $G/N_\Tr$ is abelian.

\begin{proposition}\label{prop:connlie}
Let $G$ be a connected Lie group.  Then in (\ref{eq:trakercon}) we have that
 $V$ is the maximal vector quotient of $G$, which is also a (not necessarily
maximal) vector subgroup of $R/\wbar{[R,G]}$, and $K$ is the maximal compact 
quotient of $G$ whose semisimple part is a quotient of $S_c$ by a finite group.

If, moreover, $G$ is simply connected, then $V\cong R/[R,G]$ and $K\cong S_c$.
\end{proposition}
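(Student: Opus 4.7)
The strategy is to combine the formula $N_\Tr=\wbar{[R,G]S_{nc}}$ from (\ref{eq:shtern}) with the decomposition $G/N_\Tr=V\times K$ from (\ref{eq:trakercon}), and to identify $V$ and $K$ by establishing the appropriate universal properties for vector and compact quotients of $G$.

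First I would verify that $V$ is the maximal vector quotient of $G$. Any continuous homomorphism $\vphi:G\to\Ree^n$ annihilates $[R,G]$ since $\Ree^n$ is abelian, and annihilates $S_{nc}$ since the connected semisimple group $S_{nc}$ coincides with its own commutator. Hence $\vphi$ descends to $G/N_\Tr=V\times K$ and, since $K$ is compact and $\Ree^n$ is torsion-free, further to $V$. The dual assertion that $K$ is the maximal compact quotient of $G$ relies on two standard Lie-theoretic facts: in any compact connected Lie group the radical is central, so any continuous $\vphi:G\to C$ into a compact group $C$ kills $[R,G]$; and, since $\ls_{nc}$ has no compact semisimple ideal, any continuous image of $S_{nc}$ in a compact group has trivial Lie algebra and so, by connectedness, is trivial. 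Hence $\vphi$ kills $N_\Tr$ and then $V$, the latter since a vector group has no non-trivial compact image.

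Next, for the embedding $V\hookrightarrow R/\wbar{[R,G]}$: since $G=RS$ and $S=[S,S]$ maps trivially into $V$, the restriction of $G\to V$ to $R$ is surjective and factors through the connected abelian Lie group $R/\wbar{[R,G]}\cong\Ree^a\times\Tee^b$. The torus factor is killed because $V$ is torsion-free, and a linear section of the resulting surjection $\Ree^a\to V$ provides the desired (possibly non-maximal) vector subgroup inclusion. For the semisimple part of $K$: as $[R,G]$ and $S_{nc}$ both lie in $N_\Tr$, the image of $S$ in $K$ coincides with the image of $S_c$ and exhausts $[K,K]$, so $[K,K]\cong S_c/(S_c\cap N_\Tr)$. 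The finiteness of $S_c\cap N_\Tr$ comes from the vector-space direct sum $\mathfrak{g}=\mathfrak{r}\oplus\ls_c\oplus\ls_{nc}$: the Lie algebra of $N_\Tr$ is an ideal containing $[\mathfrak{r},\mathfrak{g}]+\ls_{nc}\subseteq\mathfrak{r}+\ls_{nc}$, and the direct-sum decomposition forces it to meet $\ls_c$ trivially, so the intersection is discrete and hence finite by compactness of $S_c$.

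For the simply connected case, the Levi decomposition gives $G=R\rtimes S$ with $R\cap S=\{e\}$, both $R$ and $S$ simply connected, and $S=S_c\times S_{nc}$ a direct product of simple simply connected factors. Here $[R,G]$ is closed (as the integral subgroup corresponding to an ideal in a simply connected Lie group), and $R/[R,G]$ is a vector group: $R/[R,R]$ is a vector group in the simply connected solvable setting, and $[R,G]/[R,R]$ is a closed connected subgroup, hence a linear subspace. At the Lie algebra level one has $[\ls,\mathfrak{r}]\subseteq[\mathfrak{r},\mathfrak{g}]$, so $S$ acts trivially on $R/[R,G]$, whence $G/[R,G]\cong(R/[R,G])\times S_c\times S_{nc}$ as a direct product. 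Quotienting by $S_{nc}$ yields $G/N_\Tr\cong(R/[R,G])\times S_c$, and uniqueness of the vector-times-compact decomposition identifies $V\cong R/[R,G]$ and $K\cong S_c$. The main hurdle throughout lies in the careful invocation of the Lie-theoretic facts---closedness of $[R,G]$ in the simply connected case, triviality of compact-group images of $S_{nc}$, and the finiteness of $S_c\cap N_\Tr$---each standard but needing precise justification.
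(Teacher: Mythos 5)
Your route differs from the paper's: the paper writes down the explicit continuous surjection $(R/\wbar{[R,G]})\times(S/S_{nc})\to G/N_\Tr$ with kernel $D=\{(s\wbar{[R,G]},s^{-1}S_{nc}):s\in R\cap S\}$ and reads $V$ and $K$ off the resulting presentation $(W\times T\times L)/D$, splitting $W$ into a complement $V$ of $U=\spn_\Ree(W\cap\wbar{S_{nc}})$ and absorbing the rest into $K$. You instead characterize $V$ and $K$ by universal properties and compute the semisimple part of $K$ at the Lie-algebra level. Your argument for $V$ (any homomorphism into $\Ree^n$ kills $[R,G]$ and the perfect group $S_{nc}$, hence $N_\Tr$, and then the compact factor) is correct and arguably cleaner than the paper's, and your simply connected case coincides with the paper's.

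There are two genuine problems. First, your proof that $K$ is the maximal compact quotient rests on the assertion that ``a vector group has no non-trivial compact image,'' which is false: $\Ree$ surjects onto $\Tee$. A compact quotient of $G$ therefore need not factor through $K$ --- for $G=\Ree\times\SU(2)$ one has $V=\Ree$ and $K=\SU(2)$, yet $\Tee\times\SU(2)$ is a compact quotient --- so the universal property you invoke fails as stated, and the step cannot be repaired without reinterpreting ``maximal compact quotient'' (say, as the quotient by the smallest \emph{connected} closed normal subgroup with compact quotient). The paper sidesteps this entirely by exhibiting $K$ directly as $(U\times T\times L)/D$ rather than by a maximality argument. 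Second, in proving $S_c\cap N_\Tr$ finite you note that $\mathrm{Lie}(N_\Tr)$ is an ideal \emph{containing} $[\mathfrak{r},\mathfrak{g}]+\ls_{nc}\subseteq\mathfrak{r}+\ls_{nc}$ and conclude that it meets $\ls_c$ trivially; but containing a subset of $\mathfrak{r}+\ls_{nc}$ does not force $\mathrm{Lie}(N_\Tr)$ to be \emph{contained} in $\mathfrak{r}+\ls_{nc}$ (the ideal $\mathfrak{g}$ also contains it). Since $N_\Tr$ is the closure of the analytic subgroup $[R,G]S_{nc}$, one must control the Lie algebra of that closure, e.g.\ via the fact that the derived algebra of $\mathrm{Lie}(\wbar{H})$ lies in $\mathrm{Lie}(H)$ for an analytic subgroup $H$, so that the image of $\mathrm{Lie}(N_\Tr)$ in $\mathfrak{g}/\mathfrak{r}\cong\ls$ is a perfect ideal contained in $\ls_{nc}$. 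The conclusion is true, but this step is missing from your argument.
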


\begin{proof}
For $r,r'$ in $R$ and $s,s'$ in $G$ we have
\[
rs[R,G]=sr[R,G]\text{ and hence }rsr's'[R,G]=rr'ss'[R,G].
\]
Thus the map
\[
(r\wbar{[R,G]},sS_{nc})\mapsto rs\wbar{[R,G]S_{nc}}
:(R/\wbar{[R,G]})\times (S/S_{nc})\to G/N_\Tr
\]
is a continuous homomorphism with kernel
\[
D=\{(s\wbar{[R,G]},s^{-1}S_{nc}):s\in R\cap S\}.
\]
We let $L=S/S_{nc}=S_cS_{nc}/S_{nc}=S_c/(S_c\cap S_{nc})$.
The abelian group $R/\wbar{[R,G]}$ admits trivial action by $S$ and
decomposes as a direct product $WT$, where
$W$ is a vector group and $T$ is compact.  Thus $G/N_\Tr$ is the continuous
isomorphic image of
\[
(W\times T\times L)/D=(V\times U\times T\times L)/D
\cong V\times K
\]
where $V$ is a complementary subspace to  $U=\spn_\Ree(W\cap \wbar{S_{nc}})$ in $W$
and $K=(U\times T\times L)/D$.

If $G$ is simply connected, then so too are $R$ and $S$, and the latter
is a direct product $S_c S_{nc}$.  Furthermore, $[R,G]$ is a normal integral
subgroup hence itself closed, with $R/[R,G]$ a vector group
(see \cite[\S 11.2,\S 14.5]{hilgertn}).  Thus $K=S_c$.
\end{proof}

\begin{remark}
In \cite{shtern}, a further remarkable fact is shown.  If we consider the discretization
$G_d$ of a connected Lie group, then
\[
N_\cmap(G_d)=[R,G]S_{nc}.
\]
Notice that Proposition \ref{prop:trasin}, above, shows that $N_\Tr(G_d)=
N_\csin(G_d)=\{e\}$.
We shall indicate in our examples, below, some situations in which this
differs from $N_\Tr(G)$.
\end{remark}

We  now address minimal almost periodicity of $G$.  

\begin{corollary}
A connected Lie group $G$ is minimally almost periodic if and only if
$S_c=\{e\}$ and the image of $S$ in $G/\wbar{[R,G]}$ is dense.
In particular, this holds if $S_c=\{e\}$
and $\wbar{[R,G]}=R$.
\end{corollary}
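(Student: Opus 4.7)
The plan is to reformulate minimal almost periodicity via Shtern's formula and analyze the two directions separately. By \eqref{eq:trakercon} and \eqref{eq:shtern}, $G$ is minimally almost periodic exactly when $G = \wbar{[R,G]S_{nc}}$. The ``in particular'' clause is then immediate, since $\wbar{[R,G]} = R$ makes $G/\wbar{[R,G]} = G/R$ coincide with the (Hausdorff) image of $S$ under $q_R$, so that density of the image of $S$ there is automatic; combined with $S_c=\{e\}$ this gives minimal almost periodicity by the main equivalence.

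For the easy direction, assume $S_c = \{e\}$ (so $S = S_{nc}$) and that the image of $S$ is dense in $G/\wbar{[R,G]}$, i.e.\ $G = \wbar{S\cdot\wbar{[R,G]}} = \wbar{S[R,G]}$. Normality of $[R,G]$ in $G$ allows swapping factors inside the closure, yielding $G = \wbar{[R,G]S} = \wbar{[R,G]S_{nc}}$, so $G$ is minimally almost periodic.

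For the converse, assume $G = \wbar{[R,G]S_{nc}}$. Density of the image of $S$ in $G/\wbar{[R,G]}$ is immediate from the containment $S_{nc}\subseteq S$. The substantive task is to show $S_c=\{e\}$; I argue contrapositively by constructing a non-trivial finite-dimensional unitary representation of $G$ whenever $S_c\neq\{e\}$. Pass to the universal cover $\tilde{G}\to G$ with discrete central kernel $Z$. The pulled-back compact factor $\tilde{S}_c$ shares the non-zero Lie algebra $\mathfrak{s}_c$, hence is a non-trivial simply connected compact semisimple Lie group. Since $\tilde{G}$ is simply connected, Proposition \ref{prop:connlie} identifies $\tilde{G}/N_\Tr(\tilde{G})\cong \tilde{R}/[\tilde{R},\tilde{G}]\times\tilde{S}_c$ and thereby furnishes a continuous surjective homomorphism $\varphi:\tilde{G}\twoheadrightarrow\tilde{S}_c$ via the second-factor projection.

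The image $\varphi(Z)$ is a discrete subgroup of the compact $\tilde{S}_c$, hence finite, so $\tilde{S}_c/\varphi(Z)$ is a non-trivial compact connected semisimple Lie group, and the composition $\tilde{G}\to\tilde{S}_c\to\tilde{S}_c/\varphi(Z)$ annihilates $Z$ and therefore descends to a non-trivial continuous homomorphism $G\to\tilde{S}_c/\varphi(Z)$. Composing with any faithful finite-dimensional unitary representation of this compact target produces a non-trivial finite-dimensional unitary representation of $G$, contradicting minimal almost periodicity. The principal obstacle is this converse: a direct Lie-algebra argument in $G$ is hampered because $\mathfrak{s}_c$ need not be an ideal of $\mathfrak{g}$ and the closure in $\wbar{[R,G]S_{nc}}$ can in principle contribute on the radical side (as illustrated by the non-simply-connected examples preceding this corollary). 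The cover-and-quotient step sidesteps these pathologies by exploiting the clean identification $K\cong S_c$ available in the simply connected case.
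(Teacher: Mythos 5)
Your argument is correct, and its backbone is the same as the paper's: both proofs reduce minimal almost periodicity to the identity $G=N_\cmap=N_\Tr=\wbar{[R,G]S_{nc}}$ via \eqref{eq:trakercon} and Shtern's formula \eqref{eq:shtern}, from which density of the image of $S$ in $G/\wbar{[R,G]}$ and the sufficiency of the two conditions follow directly. The one step where you genuinely diverge is the claim that $S_c\neq\{e\}$ obstructs minimal almost periodicity: the paper dispatches this in one line by observing that the semisimple group $G/R$ then has a non-trivial quotient of $S_c$ as a (compact) quotient, whereas you pass to the universal cover and use the clean identification $K\cong\tilde{S}_c$ from Proposition \ref{prop:connlie} to manufacture the compact quotient there before descending. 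Your route is longer but more self-contained, since it leans only on material already proved in the paper rather than on the standard (but unreferenced) structure theory of quotients of semisimple groups. One small repair: you assert that $\varphi(Z)$ is a \emph{discrete} subgroup of $\tilde{S}_c$, hence finite, but the continuous image of a discrete subgroup need not be discrete; the correct justification is that $Z$ is central in $\tilde{G}$ and $\varphi$ is surjective, so $\varphi(Z)$ lies in the centre of the simply connected compact semisimple group $\tilde{S}_c$, which is finite. With that substitution the argument goes through as written.
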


\begin{proof}
If $S_c\not=\{e\}$, then $G/R$ admits a non-trivial quotient of $S_c$,
as a quotient.
The proof of Proposition \ref{prop:connlie} shows that
$Q=G/\wbar{[R,G]}$ is a continuous isometric image of
\[
(R/\wbar{[R,G]}\times S)/D.
\]
If the image of $S$ is not dense in $Q$, then $Q$ has an non-trivial
abelian quotient.  These two situations are the only pair
of obstructions to minimal almost periodicity.
\end{proof}

\begin{example}\label{ex:Lie}
We consider examples to illustrate the complications of situation above.  

We let 
$\wtil{\SL}_2(\Ree)$ be the simply connected covering group of 
$\SL_2(\Ree)$.  Let $Z$ denote the centre of $\wtil{\SL}_2(\Ree)$ which
admits an isomorphism $j:\Zee\to Z$, and satisfies that 
$\wtil{\SL}_2(\Ree)/Z\cong \SL_2(\Ree)$.  

{\bf (i)}  Let $G=[\Ree\times\wtil{\SL}_2(\Ree)]/D$ where $D=\{(n,j(n)):n\in\Zee\}$.
Then $R=[\Ree\times\{I\}]/D\cong\Ree$ with $[R,G]=\{e\}$ and $S=S_{nc}=
[\{0\}\times\wtil{\SL}_2(\Ree)]/D\cong\wtil{\SL}_2(\Ree)$.  Notice that
 $[\Zee\times\{I\}]/D=R\cap S$, i.e.\ $(0,j(n))D=(-n,I)D$.  Hence, we have
\[
G/N_\Tr=RS/S=R/(R\cap S)\cong\Ree/\Zee\cong\Tee.
\]
This shows that $V$ can be smaller than the maximal vector subgroup
of $R/\wbar{[R,G]}$.

{\bf (ii)} If $\xi_1,\dots,\xi_m$ are rationally independent in $\Ree$,
then the map $\alp_m:\Zee\to\Tee^m$, $\alp_m(n)=(e^{in\xi_k})_{k=1}^m$, has dense
range.  Indeed, $\Zee^m+\Zee(\xi_k)_{k=1}^m$ is dense in $\Ree^m$, and projects
onto this range.
 
We consider
\[
G=(\Tee^m\times \wtil{\SL}_2(\Ree))/D\text{ where }D=\{(\alp_m(n),j(n)):n\in\Zee\}.
\]
Here $R=(\Tee^m\times\{I\})/D\cong \Tee^m$ is central, so $[R,G]=\{e\}$ and
$S=(\{e\}\times\wtil{\SL}_2(\Ree))/D\cong\wtil{\SL}_2(\Ree)$ is dense.
Hence $G$ is minimally almost periodic.  

Notice that $N_\cmap(G_d)=S$, and 
$G_d/N_\cmap(G_d)\cong \Tee_d^m/\alp_m(\Zee)$.

This is an easy modification of a well-known example, see \cite[14.5.10]{hilgertn}.
We have increased dimension of the torus to support Example \ref{ex:nLminap}, below.

{\bf (iii)} We let $\xi$  be irrational and let $H=\Ree^4$
with Heisenberg-type product
\begin{align*}
(x,&y,z,\zeta)(x',y',z',\zeta') \\
&=(x+x',y+y',z+z'+xy'-x'y,\zeta+\zeta'+\xi(xy'-x'y)).
\end{align*}
We see that $s=\begin{bmatrix} a & b \\ c & d\end{bmatrix}$ in the connected group 
$\GL_2(\Ree)_0$ acts on $H$ by
\[
s\cdot (x,y,z,\zeta)=(ax+by,cx+dy,(\det s)z,(\det s)\zeta).
\]
We let $G=H\rtimes \GL_2(\Ree)_0$.  Then $R=H\rtimes (\Ree^{>0} I)$
(recall that $R$ is connected so we use only the connected component of the 
centre of $\GL_2(\Ree)_0$),
$[R,G]=(\Ree^2\times \Ree(1,\xi))\rtimes\{I\}$ and
$S=\{0\}\rtimes\SL_2(\Ree)$.  Thus $G/N_\Tr\cong \Ree\times\Ree^{>0}\cong\Ree^2$.
This example and its successor were inspired by Rothman \cite[2.5]{rothman}.

{\bf (iii$'$)} The centre of $H$, above, is $Z(H)=\{0\}\times\Ree^2$, and is acted upon
trivially by $\SL_2(\Ree)$.  We let
$\wbar{H}=H/(\{0\}\times\Zee^2)\cong \Ree^2\times\Tee^2$.  This time
let $G=\wbar{H}\rtimes \SL_2(\Ree)$.  Here we find that $\wbar{[R,G]}=\wbar{H}\rtimes\{I\}$
and we get that $G$ is minimally almost periodic.  

Notice, in this case, that $[R,G]$ is not closed.  It may be computed that
$G_d/N_\cmap(G_d)$ is isomorphic to the discrete group
$\Tee^2_d$ modulo an irrational wind.
 \end{example}

\begin{remark}\label{rem:connected}
Suppose $G$ is connected, and let $\fL$ be a downward directed
collection of compact normal
subgroups $L$ for which $G/L$ is Lie, and $\bigcap_{L\in\fL}L=\{e\}$.  

{\bf (i)}  For $L\supseteq L'$ in $\fL$ we have $G/L\cong(G/L')/(L/L')$.  It follows that
$(G/L)/N_\Tr(G/L)=V\times K_L$ for the same vector group $V$ while $K_L$ is a quotient
of $K_{L'}$.  Hence $G/N_\Tr(G)=V\times K$ where $K$ is the inverse limit
of the compact Lie groups $K_L$.

{\bf (ii)} In \cite{shtern1} it is shown that $N_\Tr=N_\cmap$ has form similar 
to (\ref{eq:shtern}), and thus is connected.

{\bf (iii)} If each $G/L$ is solvable, then each $K_L$ is abelian, and hence so too is $K$.
Thus $N_\Tr$ contains the derived subgroup $[G,G]$, and hence its closure.
However, $G/\wbar{[G,G]}$ is abelian, so $\wbar{[G,G]}\subseteq N_\Tr$.
Hence $N_\Tr=\wbar{[G,G]}$.

We note that this captures aspects of Belti\c{t}\u{a} and Belti\c{t}\u{a} 
\cite[Theo.\ 2.8]{beltitab}, which were proved there by much different means.
\end{remark}

\begin{example}\label{ex:nLminap}
Let $\xi_1,\xi_2,\dots$ be rationally independent in $\Ree$, 
$\alp:\Zee\to\Tee^\En$ be given by $\alp(n)=(e^{i n \xi_k})_{k=1}^\infty$.
Any non-empty open set in $\Tee^\En$ meets $\alp(\Zee)$, thanks to
Example \ref{ex:Lie} (ii), so $\wbar{\alp(\Zee)}=\Tee^\En$.  We then consider
\[
G=[\Tee^\En\times\wtil{\SL}_2(\Ree)]/D\text{ where }D=\{(\alp(n),j(n)):n\in\Zee\}.
\]
By the last remark, the family $\fL=\{\Tee^{\{m+1,m+2,\dots\}}:m\in\En\}$, 
with each Lie quotient of the form given in Example \ref{ex:Lie} (ii),
shows that $G$ is minimally almost periodic.
\end{example}

We have only one interesting thing to say about the trace kernels of
non-connected Lie groups.

\begin{proposition}
Let $G$ be an almost connected Lie group.  Then $N_\Tr(G)=N_\Tr(G_0)$.
In particular, $N_\Tr(G)$ is connected.
\end{proposition}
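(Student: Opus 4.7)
The plan is to prove $N_\Tr(G)=N_\Tr(G_0)$ by double containment, with the final assertion about connectedness flowing immediately from Shtern's formula (\ref{eq:shtern}).

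The containment $N_\Tr(G_0)\subseteq N_\Tr(G)$ is immediate by restriction: every $u\in\tpos(G)$ restricts to an element of $\tpos(G_0)$, so any $s\in N_\Tr(G_0)$ is killed by every $u\in\tpos(G)$. For the reverse inclusion, I would show that any $s\in G\setminus N_\Tr(G_0)$ is separated from the identity by some trace on $G$. If $s\notin G_0$, then since $G/G_0$ is compact (almost-connectedness), (\ref{eq:abelcom}) yields a trace on $G/G_0$ not equal to $1$ at $sG_0$; pull it back along $q_{G_0}$. The essential case is $s\in G_0\setminus N_\Tr(G_0)$. Here (\ref{eq:trakercon}) gives $G_0/N_\Tr(G_0)=V\times K$ with $V$ a vector group and $K$ compact, and normalized product characters $\chi_V\cdot\chi_\pi/d_\pi$ separate points on $V\times K$; so I may choose such a character $\chi$ whose value at the image of $s$ is not $1$, and pull it back to a $v\in\tpos(G_0)$ satisfying $|v|\le 1$ and $v(s)\ne 1$.

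The crux is to promote $v$ to a trace on $G$. Since $N_\Tr(G_0)$ is characteristic in $G_0$, it is normal in $G$; since $G/G_0$ is compact, I would average via
\[
v_{\mathrm{avg}}(g)=\int_{G/G_0} v(xgx^{-1})\,d\mu(xG_0),\qquad g\in G_0,
\]
where $\mu$ is normalized Haar measure on $G/G_0$. This is well-defined on cosets thanks to the trace property of $v$. Routine checks show that $v_{\mathrm{avg}}$ is continuous, positive definite, a trace on $G_0$, and globally $G$-invariant under conjugation. Extending by zero on the clopen complement $G\setminus G_0$ produces a function $u\colon G\to\Cee$, which I claim lies in $\tpos(G)$.

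Verifying $u\in\tpos(G)$ is the main technical step. Continuity is immediate because $G_0$ is clopen; positive definiteness follows by partitioning any quadratic form $\sum_{i,j}c_i\overline{c_j}u(g_j^{-1}g_i)$ along the $G_0$-cosets of the $g_i$'s and reducing to positivity of $v_{\mathrm{avg}}$ on $G_0$. The main obstacle is the trace identity $u(gh)=u(hg)$ when neither $g$ nor $h$ individually lies in $G_0$ but their product does; here the elementary identity $h(gh)h^{-1}=hg$ combined with the $G$-invariance of $v_{\mathrm{avg}}$ resolves the issue. Finally $u(s)=v_{\mathrm{avg}}(s)\ne 1$: otherwise the average of $x\mapsto v(xsx^{-1})$, each value in the closed unit disc, being $1$ would force the integrand to be identically $1$ by continuity, and specializing to $x=e$ gives $v(s)=1$, a contradiction. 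The connectedness of $N_\Tr(G)$ then follows at once since $N_\Tr(G_0)=\overline{[R,G_0]S_{nc}}$ is the closure of a product of two connected integral subgroups.
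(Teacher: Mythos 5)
Your proof is correct and follows essentially the same route as the paper: both establish $N_\Tr(G_0)\subseteq N_\Tr(G)$ by restriction, and both obtain the reverse inclusion by averaging a separating trace on $G_0$ over conjugation by the finite group $G/G_0$ and extending by zero to all of $G$. The only differences are cosmetic: you make explicit the easy case $s\notin G_0$ (left implicit in the paper), and where the paper symmetrizes $v=\tfrac{1}{2}(u+\bar u)$ so that the averaged value at $r$ is visibly real and $<1$, you instead observe that an average of closed-unit-disc values equal to $1$ would force the integrand to be identically $1$.
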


\begin{proof}
We have that $G_0$ is open.  Since $\tpos(G)|_{G_0}\subseteq\tpos(G_0)$ we see that
$N_\Tr(G_0)\subseteq N_\Tr(G)$.  To see the converse, suppose 
$r\in G_0\setminus N_\Tr(G_0)$.  Then there is a $u$ in $\tpos(G_0)$ for which $u(r)\not=1$.
Hence if $v=\frac{1}{2}(u+\bar{u})$ then $v\in\tpos(G_0)$ with $-1\leq v(r)<1$.  Define 
\[
\dot{v}(s)=\begin{cases} 
\frac{1}{|G/G_0|}\sum_{tG_0\in G/G_0}u(t^{-1}st) &\text{if }s\in G_0 \\
0 &\text{otherwise.}\end{cases}
\]
Then $\dot{v}$ is well-defined, and continuous since $G_0$ is open, and 
thus in $\tpos(G)$, with $\dot{v}(r)<1$.  
\end{proof}

\begin{example}\label{ex:finiteindex}
Let $\alp$ be an inner automorphism on $\SL_2(\Ree)$, as given in
Example \ref{ex:gammaF} (i), above.  Then the last proposition
shows for $G=\SL_2(\Ree)\rtimes\langle\alp\rangle$ that
$N_\Tr=\SL_2(\Ree)\rtimes\{e\}$, and hence 
is of finite index in $G$.
\end{example}

\begin{remark}
Since an almost connected compactly generated group $G$ is compactly generated,
we know that $G/N_\Tr$ is almost connected and in $[\csin]$ and hence
of the form $V\rtimes K$ where the action of $K$ on $V$ factors through a finite group.
The vector group $V$, being connected, is a vector quotient
of $G_0$, which is also the same vector quotient of any quotient $G_0/L$
for any compact co-Lie subgroup $L$ of $G_0$.

Hence, much like (\ref{eq:tracon}) we have that
\[
\tpos(G)\cong\tpos(V\rtimes K)=\wbar{\mathrm{conv}}
\left[\pos_1^K(V)\otimes\left\{\frac{1}{d_\pi}\chi_\pi:\pi\in\what{G}\right\}\right]
\]
where $\pos_1^K(V)=\{u\in\pos_1(V):u(k\cdot v)=u(v)\text{ for }v\text{ in }V\text{ and }k
\text{ in }K\}$, which is isomorphic to the space of probabilities
on the dual group, averaged over (finite) orbits of $K$, $\mathrm{Prob}^K(\what{V})$.
\end{remark}

\section{Reduced traces}\label{sec:redtra}

We let $\lam=\lam_G:G\to\Un(\bl^2(G))$ denote the left regular representation
\[
\pos_\lam(G)=\{\langle\lam(\cdot)f,f\rangle:f\in\bl^2(G)\}.
\]
This space is naturally isomorphic to the space of normal positive functionals
on the group von Neumann algebra $\vn(G)=\lam(G)''$, as this algbera is
in standard form.  We let
\[
\pos_r(G)=\wbar{\pos_\lam(G)}^{w*}=\wbar{\pos_\lam(G)}^{uc}\subseteq\pos(G)
\]
where $w^*$ represents the weak* topology of $\cstar(G)$ and $uc$ the topology
of uniform convergence on compact sets; see \cite[13.5.2]{dixmier}.
Then $\pos_r(G)$ is naturally
identified with the space of positive linear functionals on the reduced
C*-algebra $\cstar_r(G)$ and consists of all positive definite matrix coefficients 
of representations which are weakly contained in $\lam$.  

We recall Hulanicki's Theorem: 
 \[
 G\text{ is amenable}\;\Leftrightarrow\;
 1\in\pos_r(G)\;\Leftrightarrow\;
 \pos_r(G)=\pos(G)\;\Leftrightarrow\;
 \cstar_r(G)\cong\cstar(G).
 \]
Let us review two properties of these spaces, which are familiar to specialists.

\begin{lemma}\label{lem:redpd}
\begin{itemize}
\item[(i)] If $H$ is a closed subgroup of $G$ then $\pos_r(G)|_H\subseteq
\pos_r(H)$. 
\item[(ii)] If $N$ is an amenable closed normal sugbroup of $G$ then
$\pos_r(G/N)\circ q_N\subseteq \pos_r(G)$.
\end{itemize}
\end{lemma}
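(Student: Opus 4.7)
For (i), the plan is to first show that $\lam_G|_H$ is weakly equivalent to $\lam_H$, so that matrix coefficients of $\lam_G$ restricted to $H$ are uniform-on-compacta limits of matrix coefficients of $\lam_H$. I would use a Bruhat section and a quasi-invariant measure $\mu$ on $G/H$ to decompose $L^2(G)\cong \int^\oplus_{G/H} L^2(Hx)\,d\mu(x)$, with each fiber $H$-isomorphic to $L^2(H)$ under left translation (up to the modular Radon--Nikodym cocycle). This exhibits $\lam_G|_H$ as a direct-integral multiple of $\lam_H$. Given $u(s)=\langle\lam_G(s)f,f\rangle \in \pos_\lam(G)$, decomposing $f=\int^\oplus f_x\,d\mu(x)$ turns the restriction into a pointwise integral $u|_H(h)=\int_{G/H}\langle\lam_H(h)g_x,g_x\rangle_{L^2(H)}\,d\mu(x)$ of elements of $\pos_\lam(H)$. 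Approximating by finite Riemann-type sums (each a matrix coefficient of a finite multiple of $\lam_H$, hence lying in $\pos_r(H)$) and invoking that $\pos_r(H)$ is closed under uniform convergence on compacta, I conclude $u|_H\in\pos_r(H)$. Passing from $\pos_\lam(G)$ to its uniform-on-compacta closure $\pos_r(G)$ is then handled by the continuity of restriction in that topology.

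For (ii), the plan is to lift $v\in\pos_r(G/N)$ by combining amenability of $N$ with Fell's continuity of induction. Amenability of $N$ is equivalent to $1_N\prec\lam_N$; induction in stages gives $\lam_G\cong\mathrm{Ind}_N^G\lam_N$. Applying $\mathrm{Ind}_N^G$ to the weak containment then yields
$$\lam_{G/N}\circ q_N \;=\; \mathrm{Ind}_N^G\,1_N \;\prec\; \mathrm{Ind}_N^G\lam_N \;\cong\; \lam_G,$$
where the left equality is the standard identification of the induced trivial representation with the (lifted) quasi-regular representation on $L^2(G/N)$. Consequently, any continuous unitary representation of $G/N$ weakly contained in $\lam_{G/N}$ pulls back via $q_N$ to one weakly contained in $\lam_G$, so the matrix coefficients of such representations, composed with $q_N$, produce the desired inclusion.

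The primary difficulty is the bookkeeping in (i): the decomposition of $L^2(G)$ under $H$-restriction is a genuinely continuous direct integral, so restrictions of individual matrix coefficients of $\lam_G$ are not themselves elements of $\pos_\lam(H)$ but only limits thereof. Verifying the limit indeed lies in $\pos_r(H)$ rather than merely in $\pos(H)$ is the single non-formal step, and relies on the coincidence of the weak-$*$ and uniform-on-compacta topologies on norm-bounded subsets of $\pos(H)$, as recorded via \cite[13.5.2]{dixmier} in the prelude to this section. Everything else reduces to functoriality of the weak-containment relation.
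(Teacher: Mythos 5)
Your proof is correct, but part (i) follows a genuinely different route from the paper's. For (i) the paper gives a one-line argument: $\pos_r(G)$ is the closure, uniformly on compacta, of the \emph{compactly supported} continuous positive definite functions (Godement's theorem, cited as \cite[13.8.6]{dixmier} or \cite[7.2.5]{pedersen}); restricting a compactly supported positive definite function on $G$ to the closed subgroup $H$ yields a compactly supported positive definite function on $H$, hence an element of $\pos_r(H)$, and one passes to closures. You instead prove the stronger structural fact that $\lam_G|_H$ is (unitarily equivalent to) a multiple of $\lam_H$ via the Bruhat-section direct-integral decomposition, and then deduce the containment of coefficient spaces. Both work; the paper's argument is shorter and sidesteps all measure-theoretic bookkeeping, while yours yields the sharper conclusion that the restricted coefficient is already a \emph{normal} functional on $\vn(H)$ --- indeed, since $\vn(H)$ is in standard form on $\bl^2(H)$, the integral $\int_{G/H}\langle\lam_H(\cdot)g_x,g_x\rangle\,d\mu(x)$ is itself a single vector state, so you could replace the Riemann-sum approximation and the appeal to \cite[13.5.2]{dixmier} by this one observation and land directly in $\pos_\lam(H)$ rather than merely in its closure. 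Your part (ii) is identical to the paper's: amenability gives $1_N\prec\lam_N$, and Fell continuity of induction gives $\lam_{G/N}\circ q_N=\ind_N^G 1_N\prec\ind_N^G\lam_N=\lam_G$.
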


\begin{proof}
{\bf (i)} We have that $\pos_\lam(G)|_H\subseteq \pos_\lam(H)$. 
Indeed, $\pos_\lam(G)$ is the closure of the compactly supported
positive definite functions; see \cite[13.8.6]{dixmier} or \cite[7.2.5]{pedersen}.

{\bf (ii)} Amenability provides the weak containment $1_N\prec\lam_N$.
Then Fell's continuity of induction provides
\[
\lam_{G/N}\circ q_N=\ind_N^G 1_N\prec \ind_N^G\lam_N=\lam_G.
\]
Thus $\pos_\lam(G/N)\circ q_N\subseteq \pos_r(G)$.
\end{proof}

The following is surely known, admitting the same proof as the fact
that totally disconnected compact groups are pro-finite, but is necessary
for the main result of this section.

\begin{lemma}\label{lem:tdsin}
If $G$ in $[\csin]$ is totally disconnected, then $G$ is pro-discrete, i.e.\
there is a base at the identity consisting of compact open normal subgroups.
\end{lemma}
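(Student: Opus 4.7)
The plan is to combine van Dantzig's theorem with the $\csin$ hypothesis to exhibit, inside any neighbourhood of the identity, a compact open normal subgroup of $G$. The crucial observation is that a conjugation-invariant neighbourhood sits automatically inside the normal core of any compact open subgroup containing it, so no finiteness-of-orbit argument is needed.

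Given a neighbourhood $U$ of $e$ in $G$, I would first invoke van Dantzig's theorem (valid since $G$ is totally disconnected and locally compact) to produce a compact open subgroup $H$ of $G$ with $H\subseteq U$. Since $H$ is itself a neighbourhood of the identity and $G\in[\csin]$, I may then choose a $G$-conjugation-invariant open neighbourhood $W$ of $e$ with $W\subseteq H$.

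Set $N=\bigcap_{g\in G}gHg^{-1}$. By construction $N$ is a closed normal subgroup of $G$ contained in $H$. The central verification is that for every $g\in G$, conjugation-invariance of $W$ gives
\[
W = gWg^{-1}\subseteq gHg^{-1},
\]
and intersecting over all $g\in G$ yields $W\subseteq N$. Hence $N$ contains an open neighbourhood of $e$ and so is open; being an open subgroup of $G$ it is also closed, and as a closed subset of the compact set $H$ it is compact. Thus $N$ is a compact open normal subgroup of $G$ with $N\subseteq H\subseteq U$.

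Letting $U$ range over a neighbourhood base at $e$, the resulting subgroups $N$ form the desired base of compact open normal subgroups, so $G$ is pro-discrete. There is no real obstacle here: the two ingredients (van Dantzig plus $\csin$) fit together cleanly, and the only subtlety is recognising that the single conjugation-invariant nested neighbourhood $W$ already witnesses openness of the core $N$, bypassing any attempt to show that $H$ has only finitely many $G$-conjugates.
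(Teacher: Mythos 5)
Your proof is correct and uses the same two ingredients as the paper --- van Dantzig's theorem interleaved with the $\csin$ hypothesis --- differing only in the final normalization step: you take the normal core $\bigcap_{g\in G} gHg^{-1}$ of a compact open subgroup and use the invariant neighbourhood $W\subseteq H$ to see that the core is open, whereas the paper takes the normal closure (the subgroup generated by $\bigcup_{x\in G}xLx^{-1}$) of a smaller compact open subgroup $L$ contained in an invariant neighbourhood $V\subseteq K$, using $V$ to see that this closure stays inside $K$ and is therefore compact. Both arguments are sound; yours is marginally more economical, requiring one fewer layer of nesting, and, as you observe, neither route needs any finiteness of the set of conjugates.
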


\begin{proof}
Let $U$ be a conjugation-invariant neighbourhood of the identity.  Then
$U$ contains a compact open subgroup $K$ which, in turn, contains another
conjugation invariant neighbourhood $V$, which contains a compact open subgroup
$L$.  Then the subgroup generated by $\bigcup_{x\in G}xLx^{-1}$ is open, normal
and contained in $K$.
\end{proof}

We let the {\it reduced traces} be given by
\[
\tpos_r(G)=\tpos(G)\cap\pos_r(G).
\]
These are the tracial states on $\cstar_r(G)$.  We then
consider the {\it reduced tracial kernel}
\[
N^r_\Tr=N^r_\Tr(G)=\bigcap_{u\in\tpos_r(G)}N_u
\]
which we deem to be all of $G$ if $\tpos_r(G)=\varnothing$.

The following was the main result of an earlier version of our work
\cite{forrestsw}.

\begin{theorem}\label{theo:redtra}
If $G$ is compactly generated then the following are equivalent:
\begin{itemize}
\item[(i)] $\tpos_r(G)\not=\varnothing$;
\item[(ii)] $\fN_\csin$ admits an amenable element; 
\item[(iii)] $G$ admits an open normal amenable subgroup; and
\item[(iv)] $N_\Tr^r$ is amenable.
\end{itemize}
\end{theorem}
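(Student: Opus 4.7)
The plan is to establish the cycle (iii) $\Rightarrow$ (i) $\Rightarrow$ (iv) $\Rightarrow$ (ii) $\Rightarrow$ (iii), each step drawing on ingredients developed earlier.  For (iii) $\Rightarrow$ (i), if $H$ is open, normal, and amenable, then $G/H$ is discrete and carries the canonical trace $1_{\{eH\}}$ on $\cstar_r(G/H)$.  Pulling back by $q_H$ gives $u=1_H$, which is a trace on $G$ by normality of $H$, and which lies in $\pos_r(G)$ by Lemma \ref{lem:redpd}(ii); thus $u\in\tpos_r(G)$.

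The main step is (i) $\Rightarrow$ (iv).  Given $u\in\tpos_r(G)$, I would show that $N_u$ is amenable.  By Lemma \ref{lem:kernel}(ii), $N_u=\ker\pi_u$, so $\pi_u|_{N_u}$ is a multiple of the trivial representation $1_{N_u}$.  Since $u\in\pos_r(G)$ gives $\pi_u\prec\lam_G$, restriction yields $1_{N_u}\prec\lam_G|_{N_u}$.  For a closed normal subgroup $N$, a Borel section of $q_N$ identifies $\lam_G|_N$ with $\lam_N\otimes I_{\bl^2(G/N)}$, which is quasi-equivalent to $\lam_N$; hence $1_{N_u}\prec\lam_{N_u}$, and Hulanicki's theorem delivers amenability of $N_u$.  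Then $N_\Tr^r=\bigcap_{v\in\tpos_r(G)}N_v\subseteq N_u$, as a closed subgroup of an amenable group, is itself amenable.

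For (iv) $\Rightarrow$ (ii), Proposition \ref{prop:comgen}(i) supplies $N_v\in\fN_\csin$ for each $v\in\tpos_r(G)\subseteq\tpos(G)$; the canonical continuous embedding $G/N_\Tr^r\hookrightarrow\prod_v G/N_v$ exhibits $G/N_\Tr^r$ as residually-$\csin$.  Compact generation passes to $G/N_\Tr^r$, and Proposition \ref{prop:comgen}(ii) promotes this to $G/N_\Tr^r\in[\csin]$; hence $N_\Tr^r\in\fN_\csin$ and is amenable by hypothesis.  For (ii) $\Rightarrow$ (iii), let $N$ be an amenable member of $\fN_\csin$, so $Q=G/N$ is compactly generated and SIN.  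Its identity component $Q_0$ is connected and SIN, hence of the form $V\times K$ for a vector group $V$ and compact $K$, in particular amenable.  The quotient $Q/Q_0$ is totally disconnected and SIN, so pro-discrete by Lemma \ref{lem:tdsin}, and contains a compact open normal subgroup $L$; its preimage $H$ in $Q$ is open and normal with $H_0=Q_0$ amenable and $H/H_0\cong L$ compact, so $H$ is amenable, and $q_N^{-1}(H)\leq G$ is open, normal, and amenable as an extension of the amenable $N$ by the amenable $H$.  The chief obstacle is the quasi-equivalence $\lam_G|_N\sim\lam_N$ used in step (i) $\Rightarrow$ (iv); once that (standard) fact is secured, the remaining implications follow by concatenating the structural results obtained earlier in the paper.
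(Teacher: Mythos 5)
Your proof is correct, and it draws on the same toolkit as the paper's argument (the restriction/induction facts of Lemma \ref{lem:redpd}, Proposition \ref{prop:comgen}, Lemma \ref{lem:tdsin}, the structure of connected SIN groups, and Hulanicki's theorem), but you arrange the cycle differently and this costs you one extra nontrivial step. The paper proves (i) $\Rightarrow$ (ii) $\Rightarrow$ (iii) $\Rightarrow$ (iv) $\Rightarrow$ (i), where the last implication is immediate (if $N^r_\Tr=G$ then $G$ is amenable and $1\in\tpos_r(G)$; if $N^r_\Tr\neq G$ then $\tpos_r(G)\neq\varnothing$ by definition), so it never needs to show that $N^r_\Tr$ itself lies in $\fN_\csin$. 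Your route instead passes through (iv) $\Rightarrow$ (ii), which forces you to upgrade ``$G/N^r_\Tr$ is residually-$\csin$'' to ``$G/N^r_\Tr\in[\csin]$'' via Proposition \ref{prop:comgen}(ii) --- a valid but avoidable appeal to the $[\crsin]=[\csin]$ promotion for compactly generated groups (and note the degenerate case $\tpos_r(G)=\varnothing$, where $N^r_\Tr=G$ by convention, is handled since $G\in\fN_\csin$ trivially). Your (i) $\Rightarrow$ (iv) also replaces the paper's function-theoretic restriction $\pos_r(G)|_H\subseteq\pos_r(H)$ from Lemma \ref{lem:redpd}(i) with the representation-theoretic quasi-equivalence $\lam_G|_N\sim\lam_N$; these are two faces of the same standard fact, so nothing is lost, and your amenability argument for $N_u$ and the extension/structure argument in (ii) $\Rightarrow$ (iii) match the paper's in substance.
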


\begin{proof}
{\bf (i) $\Rightarrow$ (ii)} If $u\in\tpos_r(G)$, then Lemma \ref{lem:redpd} (i) gives that
$1=u|_{N_u}\in\pos_\lam(N_u)$ so $N_u$ is amenable.  Proposition \ref{prop:comgen}
provides that $N_u\in\fN_\csin$.

{\bf (ii) $\Rightarrow$ (iii)} 
If $N$ in $\fN_\csin$ is amenable, then the Fruedenthal-Weil Theorem provides
that $(G/N)_0$ is amenable, and $\wtil{N}=q_N^{-1}((G/N)_0)$ is also amenable with
$G/\wtil{N}\cong (G/N)/(G/N)_0$ a totally disconnected $\csin$-group.
Lemma \ref{lem:tdsin} provides a compact open normal subgroup
$M$ in $G/\wtil{N}$, so $q_{\wtil{N}}^{-1}(M)$ is open, normal and amenable.

{\bf (iii) $\Rightarrow$ (iv)}  Let $N$ be an open, normal amenable subgroup of $G$.
Then Lemma \ref{lem:redpd} (ii) shows that 
$1_N=\langle\lam_{G/N}\circ q_N(\cdot)\del_{eN},\del_{eN}\rangle\in\tpos_r(G)$.
Hence $N_\Tr\subseteq N$ and is thus amenable.

{\bf (iv) $\Rightarrow$ (i)} If $N^r_\Tr=G$, then $G$ is amenable, so $\tpos_r(G)=\tpos(G)$.
If $N^r_\Tr\not= G$, then $\tpos_r(G)\not=\varnothing$.
\end{proof}

Our proof of (ii) $\Rightarrow$ (iii) $\Rightarrow$ (iv) $\Rightarrow$ (i), holds in absence of
the assumption of compact generation.  Our proof of (iii) $\Rightarrow$ (i)
generalizes and conceptually simplifies \cite[Prop.\ 1.6]{paschkes}.

The {\it amenable radical}, $AR(G)$ of a locally compact group $G$ is the largest
amenable closed normal subgroup.  
For second countable $G$, the existence is given in \cite[4.1.2]{zimmer}.
Let us briefly outline the main steps to show its existence more generally.

If $M,N$ are amenable normal subgroups then
$M/(M\cap N)$ embeds continuously and densely into $\wbar{MN}/N$,
showing that $\wbar{MN}$ is amenable.  Let 
\[
AR(G)=\wbar{\bigcup\{ N:\text{ closed normal amenable
subgroup}\}}\subseteq G.
\]
Any cluster point of the net of $N$-invariant means, $\wtil{M}_N(\psi)=M_N(\psi|_N)$
for bounded left uniformly
continuous $\psi$ on $AR(G)$, indexed over increasing closed normal
subgroups, admits an invariant mean as any cluster point.
This is less delicate if some such a subgroup is open.
Condition (iii) above is clearly equivalent to
\begin{itemize}
\item[(iii$'$)] {\it $AR(G)$ is open.}
\end{itemize}

\begin{remark}\label{rem:connliert}
Suppose $G$ is a connected Lie group, and recall the notation of Proposition
\ref{prop:connlie}.  We have that $AR(G)=RS_cZ(S_{nc})$.  Hence it is immediate
that 
\[
\tpos_r(G)\not=\varnothing\quad\Leftrightarrow\quad AR(G)=G
\quad\Leftrightarrow\quad G/R\text{ is compact.}
\]
In this case (\ref{eq:shtern}) provides that
 $\tpos_r(G)=\tpos(G)=\tpos(G/\wbar{[G,R]})\circ q_{\wbar{[G,R]}}$.
Given \cite[10.25, 10.28 \& 10.29]{hofmannm}
and Remark \ref{rem:connected} (ii), the same holds for any connected group.
\end{remark}

We note the following beautiful result.

\begin{theorem} \label{theo:kennedyr} {\rm (Kennedy and Raum \cite{kennedyr})}
We have that $\tpos_r(G)\not=\varnothing$ if and only if $AR(G)$ is open in $G$.
Furthermore, in this case we have
\begin{equation}\label{eq:redtraces}
\tpos_r(G)\cong\left\{u\in\tpos(AR(G)):
\begin{matrix} u(trt^{-1})=u(r)\text{ for }r \\ \text{in }AR(G)\text{ and }
t\text{ in }G\end{matrix}\right\}.
\end{equation}
\end{theorem}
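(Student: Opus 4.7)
The plan is to treat the two implications separately and then address the bijection in (\ref{eq:redtraces}).

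For sufficiency, if $AR(G)$ is open then Lemma \ref{lem:redpd}(ii) with $N=AR(G)$ yields $1_{AR(G)}=\langle\lam_{G/AR(G)}\circ q_{AR(G)}(\cdot)\del_{eAR(G)}|\del_{eAR(G)}\rangle\in\pos_r(G)$, and normality of $AR(G)$ makes it tracial, so $1_{AR(G)}\in\tpos_r(G)$. For necessity, my approach is to reduce to the compactly generated case of Theorem \ref{theo:redtra}. Writing $G=\bigcup_\alp G_\alp$ as the directed union of its open compactly generated subgroups, any $u\in\tpos_r(G)$ restricts by Lemma \ref{lem:redpd}(i) to $u|_{G_\alp}\in\tpos_r(G_\alp)$, so Theorem \ref{theo:redtra} furnishes an open normal amenable subgroup $N_\alp\subseteq G_\alp$. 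Each $N_\alp$ is an open amenable subgroup of $G$ and therefore lies inside $AR(G)$; the set $\bigcup_\alp N_\alp$ is a non-empty open subset of $AR(G)$, forcing $AR(G)$ itself to be open, since any subgroup containing a non-empty open set is open.

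For the bijection in (\ref{eq:redtraces}), I would set up a restriction map and an extension-by-zero map. Restriction $u\mapsto u|_{AR(G)}$ sends $\tpos_r(G)$ into $\tpos_r(AR(G))=\tpos(AR(G))$ (the equality via Hulanicki's theorem, using amenability of $AR(G)$) by Lemma \ref{lem:redpd}(i), and $G$-invariance follows from $u(trt^{-1})=u(t^{-1}tr)=u(r)$. Conversely, given a $G$-invariant $v\in\tpos(AR(G))$, define $\tilde v$ by extension by zero; openness of $AR(G)$ gives continuity, the trace identity reduces to the case $st\in AR(G)$ (whereupon $ts=s^{-1}(st)s\in AR(G)$ by normality and $v(ts)=v(st)$ by invariance), and if $v=\langle\lam_{AR(G)}(\cdot)f|f\rangle$ with $f\in\bl^2(AR(G))$, then extending $f$ by zero to $\bl^2(G)$ gives $\tilde v=\langle\lam_G(\cdot)f|f\rangle$, because $\lam_G(s)f$ is supported in $s\cdot AR(G)$, which meets $AR(G)$ only when $s\in AR(G)$. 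Passing to $w^*$-limits handles general $v\in\pos_r(AR(G))$, so $\tilde v\in\tpos_r(G)$.

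The main obstacle, and the step inaccessible from the function-theoretic tools developed earlier in the paper, is verifying that these two maps are mutually inverse, equivalently, showing that every $u\in\tpos_r(G)$ vanishes outside $AR(G)$. This is the substantive content of \cite{kennedyr}: one exploits the $G$-dynamics on an appropriate Furstenberg-type boundary relative to $AR(G)$ to force the matrix coefficients of a reduced trace to concentrate on the amenable radical. At this point I would simply invoke their result, since the boundary-theoretic input lies outside the framework developed here but combines cleanly with the reduction above to produce both the equivalence and the stated formula.
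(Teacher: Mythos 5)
First, a point of comparison: the paper does not prove Theorem \ref{theo:kennedyr} at all. It is imported wholesale from \cite{kennedyr}, accompanied only by the gloss that reduced traces are supported on $AR(G)$ and that a trace on $AR(G)$ extends by zero to a reduced trace exactly when it is $G$-conjugation invariant. Your sufficiency argument (extension of $1_{AR(G)}$ via Lemma \ref{lem:redpd}(ii)) and your set-up of the bijection by restriction versus extension-by-zero are sound and consistent with that gloss and with the (iii)$\Rightarrow$(iv)$\Rightarrow$(i) steps of Theorem \ref{theo:redtra}, which need no compact generation; and your decision to cite \cite{kennedyr} for the support statement is exactly what the paper does.

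The genuine gap is in your necessity argument. The sentence ``each $N_\alp$ is an open amenable subgroup of $G$ and therefore lies inside $AR(G)$'' is false: $AR(G)$ is the largest amenable closed \emph{normal} subgroup, and $N_\alp$ is normal only in $G_\alp$, not in $G$ (already in $G=\Fr_2$ the open amenable subgroup $\langle a\rangle$ is not contained in $AR(\Fr_2)=\{e\}$). Worse, the reduction cannot be repaired, because the only way you use the global trace $u$ is to conclude that every compactly generated open subgroup $G_\alp$ admits an open normal amenable subgroup, and that conclusion does not imply openness of $AR(G)$: in Suzuki's group $G=\Gamma\rtimes F$ of Example \ref{ex:suzuki}(ii), the subgroup $\{e\}\rtimes F$ is compact, open and amenable, every compactly generated open subgroup contains a compact open normal (hence amenable) subgroup, and yet $\tpos_r(G)=\varnothing$ and $AR(G)$ is not open. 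So the local data extracted from Theorem \ref{theo:redtra} is strictly weaker than what is needed; openness of $AR(G)$ is a global normality condition invisible to the subgroups $G_\alp$. The necessity direction genuinely requires the Kennedy--Raum support theorem (or an equivalent): if $AR(G)$ is not open it is Haar-null, so $\fC_c(G\setminus AR(G))$ is dense in $\bl^1(G)$ and $\lam(\fC_c(G\setminus AR(G)))$ is dense in $\cstar_r(G)$; a reduced trace annihilating this set would vanish identically, a contradiction. Since you already invoke that support theorem to close the bijection, you should invoke it for necessity as well and discard the reduction to compactly generated subgroups.
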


Indeed, the authors in \cite{kennedyr} show that any element of $\lam(\fC_c(G))$
supported in $G\setminus AR(G)$ is annihilated by each reduced trace.
Hence traces are supported in $AR(G)$.  A trace on $AR(G)$
admits a trivial extension (i.e.\ $0$ outside of $AR(G)$) to a reduced
trace on $G$ exactly when it is conjugation invariant by actions from $G$.

\begin{remark}
We note that our Theorem \ref{theo:redtra} gives, for compactly generated groups,
a much simpler proof of the first statement of Theorem \ref{theo:kennedyr}.
Combined with Proposition \ref{prop:connlie} we are now in a position to have a clear understanding 
of the nature of traces in the case for connected groups; see Remark \ref{rem:connliert}, above.

It is worth noting that we proved our main result in \cite{forrestsw} before we were aware of the work of \cite{kennedyr}.
Happily, we can combine both methods to gain a deeper understanding of the structure
of reduced traces.
\end{remark}

As in (\ref{eq:tracered}), we see that if $AR(G)$ is open, then (\ref{eq:redtraces})
becomes
\begin{equation}\label{eq:redtraces1}
\tpos_r(G)\cong\left\{u\in\tpos(AR(G)/N^r_\Tr):
\begin{matrix} u(trt^{-1}N^r_\Tr)=u(rN^r_\Tr)\text{ for} \\ r\text{ in }AR(G)\text{ and }
t\text{ in }G\end{matrix}\right\}.
\end{equation}
We summarize some immediate consequences of this observation.

\begin{proposition}\label{prop:urt}
If $AR(G)/N^r_\Tr$ is finite, then $\tpos_r(G)$ is finite dimensional.
If $AR(G)=N^r_\Tr$ and is open, we will get unique reduced trace.
\end{proposition}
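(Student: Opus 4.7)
The plan is to read both assertions off the Kennedy--Raum parametrization (\ref{eq:redtraces1}). Before invoking it one must verify that the quotient $AR(G)/N^r_\Tr$ is a well-defined topological group, i.e.\ that $N^r_\Tr\subseteq AR(G)$.

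First I would handle the degenerate case. If $\tpos_r(G)=\varnothing$, the first claim is vacuous, and the hypothesis of the second statement is also excluded: $AR(G)=N^r_\Tr$ being open would force $\tpos_r(G)\not=\varnothing$ by Theorem \ref{theo:kennedyr}. So assume $\tpos_r(G)\not=\varnothing$; the same theorem forces $AR(G)$ to be open, and the support statement from \cite{kennedyr} recorded after its statement says each reduced trace $u$ vanishes on $G\setminus AR(G)$. Since $u(e)=1$, this gives $N_u=u^{-1}(\{1\})\subseteq AR(G)$, and intersecting over $u\in\tpos_r(G)$ yields $N^r_\Tr\subseteq AR(G)$.

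With the quotient well-defined, (\ref{eq:redtraces1}) realizes $\tpos_r(G)$ as a convex subset of $\tpos(AR(G)/N^r_\Tr)$. For the first assertion, if $AR(G)/N^r_\Tr$ is finite then the compact case of (\ref{eq:abelcom}) identifies $\tpos(AR(G)/N^r_\Tr)$ with the convex hull of the finitely many normalized irreducible characters, hence a finite-dimensional set; the conjugation-invariant subspace picked out by (\ref{eq:redtraces1}) is a fortiori finite dimensional. For the second, $AR(G)=N^r_\Tr$ collapses the quotient to the trivial group, whose only trace is the constant $1$; via (\ref{eq:redtraces1}) this corresponds to a unique reduced trace, namely $1_{AR(G)}$ extended by $0$ off $AR(G)$.

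The preliminary containment $N^r_\Tr\subseteq AR(G)$ is the single point that requires a moment's thought; once in hand, both claims are direct unwindings of the Kennedy--Raum description, with no real obstacle.
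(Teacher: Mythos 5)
Your proposal is correct and is essentially the paper's argument: the proposition is stated there as an immediate consequence of the parametrization (\ref{eq:redtraces1}), which is exactly how you derive both claims. Your preliminary verification that $N^r_\Tr\subseteq AR(G)$ (via the Kennedy--Raum support statement) and your treatment of the degenerate case are sensible added care, not a departure from the paper's route.
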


By way of contrast,
with aid of Lemma \ref{lem:redpd} (ii) and Theorem \ref{theo:redtra},
we may recycle the proof of Proposition
\ref{prop:trasin} (ii) to see the following:

\begin{proposition}
If  $G$ admits a non-open amenable element of $\fN_\csin$, 
then $\tpos_r(G)$ is infinite dimensional.
This happens, in particular, if $G$ is compactly generated and
$N_\Tr^r$ is not open.
\end{proposition}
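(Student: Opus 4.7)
The plan is to recycle the construction used in Proposition~\ref{prop:trasin}~(ii), but to arrange that the resulting traces appear as matrix coefficients of the left regular representation of the quotient by the amenable normal subgroup, so that Lemma~\ref{lem:redpd}~(ii) can promote them to reduced traces on $G$. Let $N$ be a non-open amenable element of $\fN_\csin$ and set $H=G/N$; then $H$ is a non-discrete $\csin$ group and hence unimodular. Using non-discreteness and the $\csin$ property, I choose a strictly decreasing sequence $U_1\supsetneq\wbar{U_2^2}\supseteq U_2\supsetneq\wbar{U_3^2}\supseteq\dots$ of relatively compact, symmetric, conjugation-invariant open neighbourhoods of the identity in $H$.

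For each $n$, define
\[
u_n(\dot{s})=\frac{m(\dot{s}U_n\cap U_n)}{m(U_n)}=\frac{1}{m(U_n)}\langle\lam_H(\dot{s})1_{U_n}|1_{U_n}\rangle.
\]
Conjugation invariance of $U_n$ together with unimodularity of $H$, exactly as in the proof of Proposition~\ref{prop:trasin}~(i), gives $u_n\in\tpos(H)$. The right-hand expression shows $u_n\in\pos_\lam(H)$, so in fact $u_n\in\tpos_r(H)$. Since $N$ is amenable, Lemma~\ref{lem:redpd}~(ii) then yields $u_n\circ q_N\in\pos_r(G)$, and the trace property descends through $q_N$, so $u_n\circ q_N\in\tpos_r(G)$. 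For linear independence, choose $\dot{s}_n\in U_n\setminus\wbar{U_{n+1}^2}$ for each $n$ and lift to $s_n\in G$. For $m\le n$ we have $\dot{s}_n\in U_n\subseteq U_m$ and $e\in U_m$, so $\dot{s}_n\in\dot{s}_nU_m\cap U_m$ and hence $(u_m\circ q_N)(s_n)>0$; for $m>n$, $\dot{s}_n\notin U_{n+1}^2\supseteq U_mU_m^{-1}$, whence $(u_m\circ q_N)(s_n)=0$. The resulting lower-triangular pattern forces $\{u_n\circ q_N\}_{n\ge1}$ to be linearly independent in $\tpos_r(G)$.

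For the ``in particular'' clause, note that if $\tpos_r(G)=\varnothing$ then $N_\Tr^r=G$ is trivially open, so the hypothesis forces $\tpos_r(G)\ne\varnothing$, and Theorem~\ref{theo:redtra} (applicable by compact generation) gives that $N_\Tr^r$ is amenable. Each $N_u$ with $u\in\tpos_r(G)\subseteq\tpos(G)$ lies in $\fN_\csin$ by Proposition~\ref{prop:comgen}~(i), so the compactly generated quotient $G/N_\Tr^r$ is residually-$\csin$, and therefore $\csin$ by Proposition~\ref{prop:comgen}~(ii); that is, $N_\Tr^r\in\fN_\csin$. Being non-open by hypothesis and amenable, $N_\Tr^r$ supplies the required non-open amenable element of $\fN_\csin$, and the first part applies. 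I do not foresee a real obstacle: the one point requiring care is the verification that each $u_n$ genuinely lies in $\pos_\lam(H)$ rather than merely in $\pos_r(H)$, and this is immediate from the explicit matrix coefficient formula.
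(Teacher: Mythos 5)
Your proof is correct and follows exactly the route the paper intends: it recycles the neighbourhood construction from Proposition \ref{prop:trasin}~(ii), upgrades the resulting traces to reduced traces via Lemma \ref{lem:redpd}~(ii), and handles the ``in particular'' clause with Theorem \ref{theo:redtra} together with Proposition \ref{prop:comgen}. The paper leaves all of this as a one-line remark, and your write-up (including the triangularity argument for linear independence and the verification that $N^r_\Tr\in\fN_\csin$) fills in the details faithfully.
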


We shall use the following to help devise and inspect examples:

\begin{theorem}\label{theo:AGamma}
Let $\Gamma$ be a finitely generated group which acts irreducibly on a
non-discrete abelian group $A$,  in the sense that
$A$ admits no non-trivial $\Gamma$-invariant closed subgroups.  Then 
for $G=A\rtimes \Gamma$ we have that $N_\Tr^r\cap (A\rtimes\{e\})$ is either
$A\rtimes\{e\}$ or the trivial subgroup.  The latter case
is equivalent to the action of $\Gamma$ factoring through
a compact group of automorphisms acting continuously on $A$. 
\end{theorem}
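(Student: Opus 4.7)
The dichotomy is immediate: $N^r_\Tr$ is closed and normal in $G$, so $N^r_\Tr \cap (A \rtimes \{e\})$ is a closed $\Gamma$-invariant subgroup of $A$, and by irreducibility it equals either $\{e\}$ or $A$.

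The heart of the equivalence is a correspondence between $\Gamma$-invariant elements of $\pos_1(A)$ and reduced traces on $G$. Given a continuous $\Gamma$-invariant $v \in \pos_1(A)$, define $\tilde{v}(a,\gamma) = v(a)\,1_{\{e\}}(\gamma)$. A direct computation---all cross terms with distinct $\Gamma$-coordinates vanish, leaving block-diagonal sums in which $\Gamma$-invariance of $v$ allows the use of positive-definiteness---shows $\tilde{v}$ is positive-definite and satisfies the trace identity. Moreover $\tilde{v}$ is the matrix coefficient of $\mathrm{Ind}_A^G \pi_v$ against its canonical cyclic vector; since $A$ is amenable, $\pi_v \prec \lam_A$, so by Fell's continuity of induction $\mathrm{Ind}_A^G \pi_v \prec \mathrm{Ind}_A^G \lam_A = \lam_G$, yielding $\tilde{v} \in \tpos_r(G)$ with $N_{\tilde{v}} \cap A = \{a \in A : v(a) = 1\}$. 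Thus the question reduces to: when does $A$ admit a continuous $\Gamma$-invariant $v \in \pos_1(A)$ with $v(a) = 1$ only at $a = e$?

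For $(\Leftarrow)$, if $\Gamma$ factors through a compact $K \subseteq \mathrm{Aut}(A)$, pick any continuous $v_0 \in \pos_1(A)$ with $v_0(a) = 1 \iff a = e$ (e.g., the Bochner--Fourier transform of a probability measure of full support on $\what A$) and average: $v(a) = \int_K v_0(k \cdot a)\,dk$ is continuous, $K$-invariant, positive-definite, and retains $v(a) = 1 \iff a = e$ (since $|v(a)| \leq 1$ with equality forcing $k \cdot a = e$ Haar-almost-everywhere, hence $a = e$). This gives $\tilde v \in \tpos_r(G)$ with $N_{\tilde v} \cap A = \{e\}$, so $N^r_\Tr \cap A = \{e\}$.

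For $(\Rightarrow)$, pick $u \in \tpos_r(G)$ with $N_u \cap A = \{e\}$ and set $v := u|_A$. Since the identity component $A_0$ and the maximal compact subgroup of $A$ are characteristic, hence $\Gamma$-invariant, irreducibility combined with non-discreteness forces either $A \cong \Ree^n$ or $A$ compact. If $A$ is compact, then $1 - \mathrm{Re}\,v$ is a continuous nonnegative function vanishing only at $e$, attaining a positive minimum on each closed set $A \setminus W$; hence the $\Gamma$-invariant open sets $V_\epsilon = \{|v - 1| < \epsilon\}$ form a neighbourhood base at $e$, the $\Gamma$-action is equicontinuous, and Arzel\`a--Ascoli yields that the closure of $\Gamma$ in $\mathrm{Aut}(A)$ (compact-open topology) is compact. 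If $A \cong \Ree^n$, write $v = \what\mu$ for a $\Gamma$-invariant Borel probability $\mu$ on $\what A = \Ree^n$ whose support is not contained in any proper subspace, and invoke the classical rigidity fact that an irreducible subgroup of $\GL_n(\Ree)$ admitting such an invariant probability must be relatively compact (proved via a covariance-form argument: the matrix $V = \int x x^T\,d\mu$, regularized if necessary, is $\Gamma$-invariant and its kernel is $\Gamma$-invariant, forcing $V$ positive-definite and $\Gamma \subseteq O(V)$). The main obstacle is this final rigidity input for the vector-group case; the compact case is cleanly handled by Arzel\`a--Ascoli, and the correspondence between traces and $\Gamma$-invariant positive-definite functions on $A$ reduces all of the analysis to measure-theoretic rigidity on $\what A$.
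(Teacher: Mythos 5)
Your setup is sound and, up to the point where the two directions split, close to the paper's: the dichotomy follows exactly as you say from normality of $N^r_\Tr$ in $G$ and irreducibility (this also justifies your later selection of a single $u\in\tpos_r(G)$ with $N_u\cap A=\{e\}$, since each $N_u\cap A$ is itself a closed $\Gamma$-invariant subgroup and hence trivial or all of $A$), and the passage from $\Gamma$-invariant elements of $\pos_1(A)$ to reduced traces on $G$ via trivial extension and $\mathrm{Ind}_A^G\pi_v\prec\lam_G$ is exactly the mechanism the paper relies on (Lemma \ref{lem:redpd} (ii) together with the description (\ref{eq:redtraces})). For the converse direction the paper averages a single non-trivial character $\chi$ over the compact group $\Sigma$ and observes that $N_u=\bigcap_{\sigma\in\Sigma}\ker(\chi\circ\sigma)$ is a proper closed invariant subgroup, hence trivial; this sidesteps your need for a probability measure of full support on $\what{A}$, which need not exist when $\what{A}$ is non-separable.

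The genuine gap is in the forward direction. The assertion that irreducibility plus non-discreteness forces $A\cong\Ree^n$ or $A$ compact is false. Irreducibility applied to $A_0$ only yields that $A$ is connected or totally disconnected; in the totally disconnected case there is in general no maximal compact subgroup to invoke ($\Que_p$ is the increasing union of its compact open subgroups yet is not compact), and there are irreducible, non-discrete, non-compact, totally disconnected examples, e.g.\ $\Gamma=\SL_d(\Zee[1/p])$ acting on $A=\Que_p^d$, or the paper's own Example \ref{ex:redtraces} (vi) with $A=\mat_2(\Que_p)/L$ --- which is precisely one of the intended applications of the theorem. Your argument says nothing in this case. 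In addition, the rigidity input for the vector case, which you yourself flag as the main obstacle, is not established by the sketch given: $\int xx^T\,d\mu$ need not converge, and any regularization restoring integrability destroys the $\Gamma$-equivariance on which the covariance argument depends, so a correct proof would have to pass through Furstenberg-type lemmas. The paper avoids the case analysis entirely: finite generation of $\Gamma$ and irreducibility make $G$ compactly generated, so Corollary \ref{cor:comgen} gives $G\in[\csin]$; hence $A$ has a base of $\Gamma$-invariant neighbourhoods of the identity and all $\Gamma$-orbits are relatively compact, and the Ascoli theorem of Grosser and Moskowitz \cite{grosserm} then yields relative compactness of the image of $\Gamma$ in $\mathrm{Aut}(A)$ in all cases at once. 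You should either adopt that route or supply both a complete argument for the totally disconnected non-compact case and a correct proof of the linear rigidity statement.
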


\begin{proof}
The irreducibility condition tells us that $N_\Tr^r\cap A$ is either $A$ or $\{e\}$.  Let us
suppose the latter.  

Since $A_0$ is characteristic in $A$, the irreducibility condition entails either
that $A$ is connected or totally disconnected.  In the latter case, let $K$ be any compact open subgroup of $A$.  Then $\Gamma(K)=A$ so $G=\langle (K\rtimes\{e\})\cup
(\{e\}\rtimes\Gamma)\rangle$ is compactly generated.  Hence we see generally
that $G$ is compactly generated.   Since $\{e\}=N_\Tr^r\supseteq N_\Tr$, Corollary
\ref{cor:comgen} tells us that $G\in[\csin]$, and hence $A\in[\csin]$ 
and admits a neighbourhood base at the identity consisting of $\Gamma$-invariant sets.  

Notice, in particular, that
each $\Gamma$-orbit in $A$ is relatively compact.  Indeed, let $K$ be either a compact
generating set (in the case that $A$
is connected), or an open compact subgroup (in the case that $A$ is totally disconnected),
contained in a relatively compact $\Gamma$-invariant neighbourhood of the identity.
Then any element of $\Gamma(K^n)$, for $n$ in $\En$ has relatively compact 
$\Gamma$-orbit.

The conditions gathered in the last two paragraphs imply that the image of $\Gamma$
in the automorphism group of $A$ (with compact open topology)
is relatively compact, thanks to the Ascoli Theorem
of Grosser and Moskowitz, \cite[Theo.\ 4.1]{grosserm}.

Conversely, suppose 
there is a compact group $\Sigma$ of automorphisms on $A$ which
contains the image of $\Gamma$.  For $\chi\in\what{A}\setminus\{1\}$, 
we average over the normalized
Haar measure to get $u=\int_\Sigma\chi\circ\sigma\,d\sigma$, which is  
a $\Gamma$-invariant trace on $A$.
For this $u$ we have $N_u=\bigcap_{\sigma\in\Sigma}\ker(\chi\circ\sigma)$, which, thanks
to the irreducibility assumption, is $\{e\}$.
\end{proof}

A semi-direct product with a compact  group is simpler.

\begin{proposition}\label{prop:KGamma}
Let $\Gamma$ be a discrete group which acts on a compact group $K$.
Then for $G=K\rtimes \Gamma$, $N_\Tr^r\subsetneq K$ if and only if 
$X_K=\{\chi_\pi:\pi\in\what{K}\}$ admits a non-trivial finite orbit for the adjoint action.
\end{proposition}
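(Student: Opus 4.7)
The plan is to prove the equivalence by identifying $\tpos_r(G)$ with the $\Gamma$-invariant subset of $\tpos(K)$, and then to analyze the latter via Peter--Weyl.

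First I would establish the unconditional containment $N_\Tr^r(G)\subseteq K$. Since $K$ is compact, open and normal in $G$, the vector $f=1_K/\sqrt{m(K)}\in\bl^2(G)$ satisfies $\langle\lam_G(\cdot)f,f\rangle=1_K$ (cosets of $K$ are disjoint), which is central by normality; hence $1_K\in\tpos_r(G)$ with $N_{1_K}=K$, giving $N_\Tr^r\subseteq K$ always.

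Next I would identify $\tpos_r(G)$ with the $\Gamma$-invariant elements of $\tpos(K)$ via restriction and zero-extension. Restricting any $u\in\tpos_r(G)$ to $K$ yields an element of $\tpos(K)$ that is $\Gamma$-invariant, since $u$ is central on $G$ and $K$-conjugation is trivial on central functions of $K$. Conversely, given $\Gamma$-invariant $u\in\tpos(K)$, I define $\tilde u$ on $G$ to be $u$ on $K$ and $0$ otherwise. Partitioning $g_1,\dots,g_n\in G$ by $\Gamma$-coset, within each block one writes $g_i=g_0(k_i,e)$ so that $\tilde u(g_i^{-1}g_j)$ reduces via $\Gamma$-invariance to $u(k_i^{-1}k_j)$; the Gram matrix $[\tilde u(g_i^{-1}g_j)]$ is thus block-diagonal with positive semidefinite blocks, and $\tilde u\in\pos(G)$. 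The trace property follows analogously from normality of $K$ combined with $\Gamma$-invariance of $u$. Since $K\subseteq AR(G)$ is open, Theorem \ref{theo:kennedyr} then identifies $\tilde u|_{AR(G)}$, which is a $G$-conjugation invariant trace on $AR(G)$, with an element of $\tpos_r(G)$; the extension must agree with $\tilde u$ on all of $G$.

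Finally I would apply Peter--Weyl as in (\ref{eq:abelcom}): every $u\in\tpos(K)$ decomposes as $u=\sum_{\pi\in\what K}c_\pi\chi_\pi/d_\pi$ with $c_\pi\geq 0$ and $\sum c_\pi=1$, and the $\Gamma$-invariance of $u$ translates to $\Gamma$-invariance of $\pi\mapsto c_\pi$ under the adjoint action on $\what K$. Summability forces $c_\pi=0$ on every infinite orbit, so $u$ is supported on the finite orbits. If $X_K$ has no nontrivial finite orbit, then $c_\pi=0$ for every non-trivial $\pi$, whence $u=1_K$; thus every element of $\tpos_r(G)$ restricts to $1_K$ on $K$, so $N_\Tr^r\supseteq K$ and $N_\Tr^r=K$. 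Conversely, a nontrivial finite orbit $\{\pi_1,\dots,\pi_n\}$ furnishes the $\Gamma$-invariant trace $v=\frac{1}{n}\sum_{i=1}^n\chi_{\pi_i}/d_{\pi_i}\in\tpos(K)$, whose zero-extension lies in $\tpos_r(G)$ and has kernel equal to $\bigcap_i\ker\pi_i\subsetneq K$ (by uniform convexity, $v(k)=1$ forces each $\pi_i(k)=I$); together with $N_\Tr^r\subseteq K$ this gives $N_\Tr^r\subsetneq K$. The principal technical obstacle is certifying that zero-extensions of $\Gamma$-invariant traces on $K$ actually land in $\tpos_r(G)$; outsourcing this to Kennedy--Raum bypasses any explicit induction-of-representations argument and keeps the proof short.
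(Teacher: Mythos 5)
Your proof is correct and follows essentially the same route as the paper's: both reduce the question to classifying the $\Gamma$-invariant tracial states on $K$ and then run the Peter--Weyl decomposition $u=\sum_{\pi\in\what{K}}c_\pi\chi_\pi/d_\pi$, noting that summability of the coefficients confines the support to finite adjoint orbits and that a non-trivial finite orbit yields the averaged character with proper kernel. The only difference is that you make explicit the identification of $\tpos_r(G)$ (restricted to $K$) with the $\Gamma$-invariant elements of $\tpos(K)$ --- via $1_K\in\tpos_r(G)$, restriction, and the Kennedy--Raum description of reduced traces for the zero-extension --- a step the paper's proof leaves implicit.
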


\begin{proof}
If $\sigma$ is a continuous automorphism of $K$ and $\pi\in\what{K}$, then it
is evident that $\pi\circ\sigma\in\what{K}$ with $\chi_\pi\circ\sigma=\chi_{\pi\circ\sigma}$.

Since $\tpos(K)=\wbar{\mathrm{conv}}X_K$, each element $u$ in $\tpos(K)$
admits decomposition $u=\sum_{\pi\in\what{K}}\breve{u}(\pi)\chi_\pi$ where
$\breve{u}\in\mathrm{Prob}(\what{K})$.  Then $u$ is $\Gamma$-invariant
if and only if each element of $\{\chi_\pi\in X_K:\breve{u}(\pi)>0\}$ admits finite
orbit. If such $u$ has $\breve{u}(\pi)>0$ for $\pi\not=1$, then
$N_u\subseteq \bigcap_{\sigma\in\Gamma}\ker(\pi\circ\sigma)\subsetneq K$.
\end{proof}

\begin{remark}
Theorem \ref{theo:AGamma} and Proposition \ref{prop:KGamma} also hold
if $G$ is an extension:  $H\to G\to \Gamma$, where $H=A$ or $K$.  However,
these results are intended to aid in the building of examples, for which
the semi-direct product formulation is adequate.
\end{remark}

\begin{example}\label{ex:redtraces}
Here we shall always write $G=A\rtimes\Gamma$ or $K\rtimes\Gamma$, 
and $N_\Tr^r=N_\Tr^r(G)$.

{\bf (i)} Let $\Gamma$ be a non-commutaitve free group, which is dense inside
$\SO(d)$, and $A=\Ree^d$.  Then $N_\Tr^r=\{e\}$.
We have that $AR(G)=A\rtimes\{I\}$, thanks to \cite{powers}.

{\bf (ii)} Let $M$ in $\GL_2(\Ree)$ have non-real eigenvalues, of modulus not $1$.
Let $q:\Gamma=\Fr_2\to\Zee^2$ be the quotient map onto the abelianization, and
let $\eta:\Zee^2\to\Ree$ be given by $\eta(m,n)=m+\xi n$ where $\xi$ is irrational.
Let $\Gamma$ act on $A=\Ree^2$ by $\sigma\cdot x=\exp(\eta\circ q(\sigma)M)x$.
This is an irreducible action with non-relatively compact non-trivial $\Gamma$-orbits. 
Hence $N_\Tr^r=A\rtimes\{I\}$.  Notice that  $\what{A}$ will admit no non-trivial 
$\Gamma$-invariant probability measure.  However, since this action factors through an 
abelian group,  $\what{A}$ will support a $\Gamma$-invariant mean on the bounded 
uniformly continuous functions.

As in (i), above, $AR(G)=A\rtimes\{I\}$.

{\bf (iii)} Let $\Gamma=\SL_d(\Zee)$ act on $A=\Tee^d$.  
Then Proposition \ref{prop:KGamma} shows that $N_\Tr^r=A\rtimes\{I\}$.

It is shown in \cite{bekkacdlh} that $\mathrm{PSL}_d(\Zee)$
admits unique reduced trace.  This group is $\Gamma$ when $d$ is odd and
$\Gamma/\langle-I\rangle$ when $d$ is even.
It follows that
\[
AR(G)=\begin{cases}A\rtimes\{I\} &\text{if }d\text{ is odd, and} \\
A\rtimes\langle-I\rangle &\text{if }d\text{ is even.}\end{cases}
\]

{\bf (iii$'$)}  Let $\Gamma=\SL_d(\Zee)$ act on $A=\Ree^d$.  
Then $A$ is not irreducible for the action of $\Gamma$, but
$G$ is compactly generated.  Hence, as in the proof of Theorem \ref{theo:AGamma},
 if we had $N^r_\Tr\cap (A\rtimes\{I\})=\{(0,I)\}$, then $A$ would have a base of
$\Gamma$-invariant neighbourhoods at the identity, which is clearly false.
The only other closed $\Gamma$-invariant subgroups are lattices
$L\rtimes\{I\}$, and $A\rtimes\{I\}$.  If we had $N^r_\Tr=L\rtimes\{I\}$, then
(\ref{eq:tracered}) shows that we would violate (iii).  Hence $N^r_\Tr=A\rtimes{I}$.

In this example we may instead take $\Gamma=\SL_d(\Que)$.  Though
this group is not finitely generated, we just observed a subgroup which
allows no non-trivial invariant traces  on $A$.  Since $\mathrm{PSL}_d(\Que)
=\Gamma/\langle -I\rangle$ is simple, we get descriptions of $AR(G)$, similar
to those in (iii) above.

{\bf (iii$''$)} Consider a free group $\Gamma=\Fr_2$ embedded into
$S=\SL_2(\Zee)$ with finite index.
Again for $A=\Ree^2$ or $A=\Tee^2$ we see that $N^r_\Tr=A\rtimes\{e\}$.
As in (i), above, $AR(G)=A\rtimes\{I\}$.

{\bf (iii$'''$)} We let $\Gamma=\Fr_2$ as above, act on $A=H$ where
$H$ is from Example \ref{ex:Lie} (ii).  Here $A$ is nilpotent with closed
derived subgroup and $A/A'\cong\Ree^2$. Then it follows (\ref{eq:shtern}) that 
$\tpos(A)=\tpos(A/A')\circ q_{A'}$.  Thus this reduces to (iii$''$), above,
and we see that $N_\Tr^r=A\rtimes\{e\}$.

{\bf (iv)} Let $\Gamma$ be any infinite discrete group and let it act on 
a product group $K=L^\Gamma$, where $L$ is compact, by shifting index.
Each non-trivial member of $\what{K}$ is a finite Kroenecker product 
$\pi_1\times\dots\times\pi_n$
where each element $\pi_j$ acts on a distinct copy of $L$ in $K$.
Then non-trivial adjoint orbits in $X_K$ are infinite,
so Proposition \ref{prop:KGamma} shows that $N_\Tr^r=K\rtimes\{e\}$.

{\bf (v)} Let $K$ be a semi-simple compact Lie group,
so $K=\prod_{i\in 1}^nS_i/D$ where each $S_i$ is a simple compact Lie
group and $D$ is a finite central subgroup.
By looking at automorphisms of the associated Lie algebra, we see that
$\mathrm{Aut}(K)$ is a subgroup of $I\rtimes P$, where 
$I\cong\prod_{i\in 1}^n(S_i/Z_i)$ is the group of inner automorphisms where
each $Z_i$ is the centre of $S_i$, and $P$ is a discrete group of permutations of 
isomorphic constituents $S_i$.  Elements of $I$ fix elements of $X_K$,
so a group $\Gamma\subset \mathrm{Aut}(K)$, acts on $X_K$ as does $\Gamma/
(\Gamma\cap I)\subseteq P$, and hence has finite orbits.  It follows that $N_\Tr^r=\{(I,e)\}$.

{\bf (vi)}  We let $S=\SL_2(\Zee[1/p])$ act on $\mat_2(\Que_p)$
by $\sigma\cdot x=\sigma x \sigma^T$, which leaves invariant the subspace
$L=\Que_p\begin{bmatrix} 0 & 1 \\ -1 & 0\end{bmatrix}$, and
induces an action of
$\Gamma=\mathrm{PSL}_2(\Zee[1/p])\cong S/\langle-I\rangle$
on $A=\mat_2(\Que_p)/L$.  
Here, no non-trivial $\Gamma$-orbit in $A$ is relatively compact,
so $N_\Tr^r=A\times\{e\}$.  Since $\Gamma$ admits unique reduced trace \cite{delaharpe,bekkacdlh1},
we see that $AR(G)=A\times\{e\}$, as well.

{\bf (vii)} Let $\Gamma=\SL_d(\Zee)$ act on $A=\Que_p^d$.  This fails
the irreduciblility hypothesis of Theorem \ref{theo:AGamma}, as subgroups
$p^n\Oh_p^d$ are each $\Gamma$-invariant.  Here the action factors through
$\SL_d(\Oh_p)$, so conclusion (b) of the theorem still holds.
\end{example}

Recall that we say that $G$ if {\it C*-simple} of $\cstar_r(G)$ is simple.  This is
well-known to imply that $AR(G)=\{e\}$.  [Indeed, the method of Lemma \ref{lem:redpd}
(ii) shows that $\cstar_r(G/AR(G))$ is a proper quotient of $\cstar_r(G)$ if 
$AR(G)\not=\{e\}$.]
In each example above, $\cstar_r(G)$ admits as a proper quotient
$\cstar_r(\Gamma)$, so none of these examples give a C*-simple group.
Le Boudec \cite{leboudec} produces examples of discrete groups with $AR(G)=\{e\}$,
but are not C*-simple.

We saw that connected groups admitting non-compact semi-simple quotients
have non-open amenable radical.  The same extends to certain semi-simple
algebraic groups over non-discrete local fields,
such as $\SL_n(\kay)$ and $\Sp_n(\kay)$ for general $\kay$, and
$\SO_n(\kay)$ when $\kay\not=\Ree$.  Each of these is 
admits only the trivial group as a discrete quotient.
Let us next consider further examples that admit no reduced traces, but many
quotients.

\begin{example}\label{ex:suzuki}
We recall the notation of Example \ref{ex:gammaF}.  It is straightforward
to see that  {\it $AR(G)$ is
open if and only if $\{e\}\rtimes F_n\subseteq AR(\Gamma_n\rtimes F_n)$
for all $n\geq m$ for some $m$.} We produce two examples where 
$\tpos_r(G)=\varnothing$.

{\bf (i)} As we saw in Example \ref{ex:redtraces} (i), $AR(\SL_2(\Zee))=\langle-I\rangle$.
Let $\alp$ and $G$ be as in  Example \ref{ex:gammaF} (ii).  
We have that $AR(\SL_2(\Zee)\rtimes
\langle\alp\rangle)=\langle -I\rangle\rtimes\{e\}$.  

{\bf (ii)} The following example is from Suzuki \cite{suzuki}.
Each $G_n=\Zee\ast(\Zee/k_n\Zee)$ with $k_n\geq 2$ has $AR(G_n)=\{e\}$,
by \cite{paschkes}.  If $\Gamma_n$ is the normalizer of $\Zee$ in $G_n$, then
$G_n=\Gamma_n\rtimes F_n$ where $F_n=\Zee/k_n\Zee$.
Notice that this group is residually finite, thanks to Gruenberg \cite{gruenberg}.

That $\tpos_r(G)=\varnothing$, in this case,
was shown by different means in \cite{forrestsw}.
The goal of this example was to have a non-discrete
C*-simple group with unique reduced trace.  
Example \ref{ex:redtraces}, above, gives non-C*-simple groups with unique trace.
\end{example}

%\begin{remark}
%amenable by SIN is amenable by discrete; does not imply inner amenable ...
%\end{remark}

%ADD THIS ADD THIS

\section{Amenable Traces}\label{sec:ametra}

In the spirit of Brown \cite[3.1.6]{brown} (really Kirchberg \cite[Prop.\ 3.2]{kirchberg}) 
we shall say that
a tracial state $\tau$ on $\fA$ is {\it amenable} (or {\it liftable}) provided 
that $a\otimes b\mapsto \tau(ab)$
extends to a state on the minimal tensor product $\fA\otimes_{\min}\fA^\op$.

To study amenable traces on groups we begin by recording the well-known facts that
$\cstar(G)$ and $\cstar_r(G)$ are {\it symmetric},
i.e.\ each isomorphic to its opposite algebra.  This allows us to establish concepts
and notation.

Let $\pi:G\to\Un(\fH)$ be a unitary representation.  We let its
{\em C*-algebra} and {\em positive definite cone} be given by
\[
\cstar_\pi=\wbar{\pi(\bl^1(G))}\quad\text{ and }\quad\pos_\pi
=\mathrm{cone}\{\langle\pi(\cdot)\xi,\xi\rangle:\xi\in\fH\}.
\]
We let $\bar{\pi}$ denote the contragradient representation, given by
$\langle\bar{\pi}(s)\xi^*,\eta^*\rangle = \langle \eta,\pi(s)\xi\rangle$,
where $\xi^*(\zeta)=\langle \zeta,\xi\rangle$ for $\zeta$ in $\fH$.
We let 
\[
\varpi=\bigoplus_{u\in\pos_1(G)}\pi_u^\infty
\] 
 be the direct sum of GNS representations
from all states, with infinite ampliation; so, for example $\varpi^\infty
\cong\varpi$.   Then $\cstar(G)\cong\cstar_\varpi$.

\begin{proposition}\label{prop:sym}
We have an isomorphism $(\cstar_\pi)^\op\cong\cstar_{\bar{\pi}}$.
Hence if we have unitary equivalence, $\pi\cong\bar{\pi}$,
then $\cstar_{\bar{\pi}}$ is symmetric.

In particular, each of $\cstar(G)$ and $\cstar_r(G)$ are symmetric.
\end{proposition}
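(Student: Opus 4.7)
My plan is to construct~(1) by exhibiting an explicit $*$-antiisomorphism between $B(\fH)$ and $B(\bar{\fH})$ that carries $\pi(G)$ onto $\bar\pi(G)^{*}$, and then derive the symmetry of $\cstar(G)=\cstar_\varpi$ and $\cstar_r(G)=\cstar_\lambda$ from the self-conjugacy of the representations $\varpi$ and $\lambda$. Let $J\colon\fH\to\bar{\fH}$ be the canonical antiunitary $\xi\mapsto\xi^{*}$, and define $\Psi\colon B(\fH)\to B(\bar{\fH})$ by $\Psi(T)=JT^{*}J^{-1}$. Because $J$ is conjugate-linear the two conjugations cancel, so $\Psi$ is complex-linear; it is clearly isometric and $*$-preserving, and the $*$ built into the definition forces $\Psi(TS)=\Psi(S)\Psi(T)$. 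Using $\pi(s)^{*}=\pi(s^{-1})$ one computes
\[
\Psi(\pi(s))\xi^{*} \;=\; J\pi(s^{-1})\xi \;=\; (\pi(s^{-1})\xi)^{*} \;=\; \bar\pi(s)^{*}\xi^{*},
\]
so $\Psi(\pi(s))=\bar\pi(s)^{*}$. Since $\{\bar\pi(s)^{*}:s\in G\}$ generates $\cstar_{\bar\pi}$ as a C*-algebra, $\Psi$ restricts to a $*$-antiisomorphism $\cstar_\pi\to\cstar_{\bar\pi}$, establishing $(\cstar_\pi)^{\op}\cong\cstar_{\bar\pi}$.

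The ``hence'' clause is then formal: a unitary equivalence $\pi\cong\bar\pi$ yields $\cstar_\pi\cong\cstar_{\bar\pi}$, and combining this with~(1) gives $(\cstar_{\bar\pi})^{\op}\cong\cstar_\pi\cong\cstar_{\bar\pi}$, which is symmetry of $\cstar_{\bar\pi}$.

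To apply this to $\cstar(G)$ and $\cstar_r(G)$ I would verify $\varpi\cong\bar\varpi$ and $\lambda\cong\bar\lambda$. For $\varpi$ the key observation is that $u\mapsto\bar u$ is an involution of $\pos_1(G)$, and a direct matrix-coefficient computation
\[
\langle\bar\pi_u(s)\xi^{*},\xi^{*}\rangle \;=\; \overline{\langle\pi_u(s)\xi,\xi\rangle} \;=\; \bar u(s),
\]
combined with the cyclicity of $\xi^{*}$ for $\bar\pi_u$ and the uniqueness of GNS triples, yields $\bar\pi_u\cong\pi_{\bar u}$; summing over $u$ with the infinite ampliations and reindexing by the involution produces $\bar\varpi\cong\varpi$. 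For $\lambda$, the pointwise conjugation $Uf=\bar f$ is a unitary $\overline{\bl^{2}(G)}\to\bl^{2}(G)$, and since complex conjugation commutes with left translation one has $U\bar\lambda(s)U^{-1}=\lambda(s)$, hence $\lambda\cong\bar\lambda$.

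The main delicate point will be keeping the conjugate-Hilbert-space conventions consistent. The tempting alternative $\Psi'(T)=JTJ^{-1}$ is in fact complex-\emph{antilinear} rather than linear (both conjugations from $J$ survive since there is no $*$ to absorb one of them), so only the variant $JT^{*}J^{-1}$ yields a genuine $*$-antiisomorphism of C*-algebras realising the opposite-algebra statement in~(1).
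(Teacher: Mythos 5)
Your overall strategy is the same as the paper's: build an anti-isomorphism $(\cstar_\pi)^\op\cong\cstar_{\bar\pi}$, reduce symmetry to the self-conjugacy $\pi\cong\bar\pi$, and verify $\bar\varpi\cong\varpi$ via GNS uniqueness applied to $u\mapsto\bar u$ and $\bar\lam\cong\lam$ via pointwise conjugation on $\bl^2(G)$; those last two steps are exactly the paper's. The only substantive difference is the packaging of the anti-isomorphism: you implement it spatially as $\Psi(T)=JT^*J^{-1}$ on $B(\fH)$, whereas the paper works at the level of $\bl^1(G)$ with the anti-homomorphism $f\mapsto\til{f}$, $\til{f}(s)=\Del(s^{-1})f(s^{-1})$, and the identity $\langle\pi(\til f)\xi,\eta\rangle=\langle\bar\pi(f)\xi^*,\eta^*\rangle$.

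There is one step you need to repair, and it matters precisely because the proposition is asserted for general locally compact $G$. You justify that $\Psi$ restricts to a map of $\cstar_\pi$ onto $\cstar_{\bar\pi}$ by saying that $\{\bar\pi(s)^*:s\in G\}$ generates $\cstar_{\bar\pi}$. For non-discrete $G$ the group unitaries $\pi(s)$, $\bar\pi(s)$ do not lie in $\cstar_\pi$, $\cstar_{\bar\pi}$ at all (these algebras are by definition $\wbar{\pi(\bl^1(G))}$; e.g.\ $\cstar_r(\Ree)\cong\cont_0(\Ree)$ contains no unitaries), so the computation $\Psi(\pi(s))=\bar\pi(s)^*$ by itself does not show that $\Psi$ carries $\cstar_\pi$ into, let alone onto, $\cstar_{\bar\pi}$. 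The fix is short: since $\Psi$ is linear and normal, integrate your identity against $f\in\bl^1(G)$ to get
\[
\Psi(\pi(f))=\int_G f(s)\,\bar\pi(s^{-1})\,ds=\bar\pi(\til f),\qquad \til f(s)=\Del(s^{-1})f(s^{-1}),
\]
and since $f\mapsto\til f$ is a bijection of $\bl^1(G)$, the isometry $\Psi$ maps $\pi(\bl^1(G))$ onto $\bar\pi(\bl^1(G))$ and hence $\cstar_\pi$ onto $\cstar_{\bar\pi}$. At that point you have reproduced the paper's argument with a spatial implementation; the rest of your write-up (the formal deduction of symmetry from $\pi\cong\bar\pi$, the identification $\bar\pi_u\cong\pi_{\bar u}$, and $U\bar\lam(s)U^{-1}=\lam(s)$) is correct as stated.
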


\begin{proof}
Let $F_\pi=\spn\pos_\pi=\spn\{\langle\pi(\cdot)\xi,\eta\rangle:\xi,\eta\in\fH\}$,
where the second equality follows from polarization identity.  We have 
$\wbar{\langle\pi(\cdot)\xi,\eta\rangle}=\langle\bar{\pi}(\cdot)\xi^*,\eta^*\rangle$.
This shows that if $\pi\cong\bar{\pi}$, then $F_\pi$, equivalently
$\pos_\pi$, is closed under conjugation.

For $f\in\bl^1(G)$ define
$\til{f}(s)=\Del(s^{-1})f(s^{-1})$ for almost every $s$ in $G$, so $f\mapsto\til{f}$
is an anti-homomorphism.  Then for $\xi,\eta$ in $\fH$ we have
\[
\langle\pi(\til{f})\xi,\eta\rangle=\int_G f(s)\langle\pi(s^{-1})\xi,\eta\rangle\,ds
=\langle \bar{\pi}(f)\xi^*,\eta^*\rangle.
\]
Hence $\|\pi(\til{f})\|=\|\bar{\pi}(f)\|$ for each $f$ in $\bl^1(G)$, so
$f\mapsto\til{f}$ induces an anti-isomorphism from $\cstar_\pi$ onto
$\cstar_{\bar{\pi}}$.  Furthermore, if $F_\pi$ is closed under conjugation,
then $\|\bar{\pi}(f)\|=\|\pi(f)\|$ for each $f$ in $\bl^1(G)$, so $\cstar_\pi$
is symmetric.

We have unitary equivalences
\[
\varpi=\bigoplus_{u\in\pos_1(G)}\pi_u^\infty=\bigoplus_{u\in\pos_1(G)}\pi_{\bar{u}}^\infty
\cong \bigoplus_{u\in\pos_1(G)}\bar{\pi}_u^\infty=\wbar{\varpi}.
\]
Meanwhile, we have for $h$ in $\bl^2(G)$ that
$\wbar{\langle\lam(\cdot)h,h\rangle}=\langle\lam(\cdot)\wbar{h},\wbar{h}\rangle$.
These observations provide symmetry for $\cstar(G)$ and $\cstar_r(G)$.
\end{proof}

\begin{remark}
The algebras $\cstar_\pi$ need not be symmetic.
If $\fA$ is a non-symmetric unital C*-algebra, let $\pi:\fA\to\fB(\fH)$ be a faithful
representation and $\Gamma=\Un(\fA)_d$, the discretized group of unitaries.
Then $\fA\cong\pi(\fA)=\cstar_{\pi|_\Gamma}$.
\end{remark}

If $u\in\tpos(G)$, define $\til{u}:G\times G\to\Cee$ by
\[
\til{u}(s,t)=u(st^{-1}).
\]
Notice that if if $(s_1,t_1),\dots,(s_n,t_n)\in
G\times G$ then for $i,j=1,\dots, n$ we have
\[
\til{u}(s_i^{-1}s_j,t_i^{-1}t_j)=u(s_i^{-1}s_jt_j^{-1}t_i)=u(t_is_i^{-1}(t_js_j^{-1})^{-1})
\]
from which it follows that $\til{u}\in\pos_1(G\times G)$.

The universal representation $\varpi:G\to\Un(\fH_\varpi)$ has Kronecker product given by
$\varpi\times\varpi:G\times G\to\Un(\fH_\varpi\otimes\fH_\varpi)$.  We let
\[
\pos_{\min}(G\times G)=\wbar{\pos_{\varpi\times\varpi}}^{w*}=
\wbar{\pos_{\varpi\times\varpi}}^{uc}\subseteq\pos(G\times G).
\]
Notice that $\pos_{\min}(G\times G)$ corresponds to the positive linear functionals
on $\cstar(G)\otimes_{\min}\cstar(G)$, and hence comprises all of the positive
definite matrix coefficients of representations weakly contained in $\varpi\times\varpi$.
Furthermore
\begin{equation}\label{eq:krocon}
\pos_{\min}(G\times G)=\wbar{\mathrm{cone}\{u\times v:u,v\in\pos(G)\}}^{uc}
\end{equation}
where each $u\times v(s,t)=u(s)v(t)$.

Since Proposition \ref{prop:sym} provides that $\cstar(G)\cong\cstar(G)^\op$ we
can define the set of {\em amenable traces} on $G$ by
\[
\tpos_{\mathrm{am}}(G)=\{u\in\tpos(G):\til{u}\in\pos_{\min}(G\times G)\}.
\]
Accordingly, we define the {\em amenable tracial kernel} by
\[
N_\amTr=N_\amTr(G)=\bigcap_{u\in\tpos_{\mathrm{am}}(G)}N_u
\]
and say that $G$ is {\em amenably tracially separated} if $N_\amTr=\{e\}$.

\begin{remark}\label{rem:amkmap}
If $u\in\tpos(G)$, then $\til{u}$ corresponds to a state on
$\cstar_{\pi_u}\otimes_{\max}(\cstar_{\pi_u})^\op$.  If $\cstar_{\pi_u}$ is nuclear,
then we employ Proposition \ref{prop:sym} along the way to see that
\[
\cstar_{\pi_u}\otimes_{\max}(\cstar_{\pi_u})^\op
\cong\cstar_{\pi_u}\otimes_{\min}\cstar_{\pi_{\bar{u}}}\cong
\cstar_{\pi_u\times \pi_{\bar{u}}}
\]
so we get the following  weak containments: 
\[
\pi_{\til{u}}\prec \pi_u\times \pi_{\bar{u}}\prec\varpi\times\varpi.
\]
That is,\ $u\in\tpos_{\mathrm{am}}(G)$.

{\bf (i)} Characters $\chi_\pi$ of finite dimensional representations
$\pi$ show that $N_\amTr\subseteq N_\cmap$. 

{\bf (ii)} If $G$ is either amenable, almost connected or 
type I, then $\cstar(G)$, and hence any quotient, is nuclear;
see the survey \cite{paterson} and references therein.
Hence $\tpos(G)=\tpos_{\mathrm{am}}(G)$, and $N_\Tr=N_\amTr$. 

 It has recently been shown by Bekka and Echterhoff
\cite{bekkae} that any algebraic group over a local field, $\alg(\kay)$, is type I.

{\bf (iii)} If $G$ is almost connected, then it is compactly generated so 
Proposition \ref{prop:comgen} and the result of Grosser and Moskowitz 
\cite[(2.9)]{grosserm2} 
show that $G/N_\amTr\in[\cmap]$, and hence $N_\amTr=N_\cmap$.
\end{remark}

\subsection{Property (T)}
The the next result stems from  Ozawa \cite[Theo.\ 7.2]{ozawa}, where it is shown 
that each discrete group with both property (T) and the factorization property (see 
defintion before Theorem \ref{theo:facpro}, below)  is residually finite.
%We say that $G$ is {\em residually property (T) embeddable}

\begin{theorem}\label{theo:amkT}
If $G$ has property (T), then  $N_\amTr=N_\cmap$.  
\end{theorem}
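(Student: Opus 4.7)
The inclusion $N_\amTr \subseteq N_\cmap$ was noted in Remark \ref{rem:amkmap}(i), since each normalised character $\frac{1}{d_\pi}\chi_\pi$ of a finite-dimensional representation is an amenable trace. The plan is to prove the reverse inclusion $N_\cmap \subseteq N_\amTr$ by showing that the GNS representation $\pi_u$ of any $u \in \tpos_\amTr(G)$ decomposes as a direct integral of finite-dimensional representations, which immediately forces $u|_{N_\cmap} \equiv 1$.

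Given $u \in \tpos_\amTr(G)$, amenability of the associated trace on $\cstar(G)$, combined with Proposition \ref{prop:sym}, supplies an $\mathrm{Ad}\,\pi_u(G)$-invariant state on $\fB(\fH_u)$; equivalently, by Bekka's characterisation, the tensor square $\pi_u \otimes \bar\pi_u$, viewed as the $G$-action on $\mathrm{HS}(\fH_u)$ by $g \cdot T = \pi_u(g) T \pi_u(g)^*$, has almost invariant vectors. Property (T) of $G$ is now applied to $\pi_u \otimes \bar\pi_u$ to promote these almost invariant vectors to a genuine invariant vector, i.e.\ a non-zero Hilbert--Schmidt operator $T \in \pi_u(G)'$; non-zero spectral projections of $T^*T$ are then finite-rank projections in $\pi_u(G)'$, giving non-zero finite-dimensional $\pi_u$-invariant subspaces of $\fH_u$. (This is essentially Bekka's theorem that property (T) groups admit no amenable representation without a finite-dimensional subrepresentation.)

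To upgrade this existence statement to a full direct integral decomposition, we exploit the tracial structure. Because $u$ is tracial and cyclic for $\pi_u$, $M_u := \pi_u(G)''$ acts on $\fH_u \cong L^2(M_u,\tau_u)$ in standard form, is finite, and is hyperfinite by amenability of $u$. Reduction theory over the centre of $M_u$ decomposes $M_u = \int^\oplus M_\alpha \, d\mu(\alpha)$ as a direct integral of tracial factors, each again hyperfinite and acting in standard form on $L^2(M_\alpha,\tau_\alpha)$ via an associated factor representation $\pi_\alpha$ of $G$. Applying the previous paragraph to $\pi_\alpha$ produces a non-zero finite-rank projection in $M_\alpha'$; but for an infinite-dimensional $\mathrm{II}_1$ factor in standard form, $M_\alpha'$ consists of right multiplications and contains no non-zero finite-rank operators. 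Hence each $M_\alpha$ is of type $I_{n_\alpha}$ with $n_\alpha < \infty$, and $\pi_\alpha$ is a multiple of an $n_\alpha$-dimensional irreducible representation of $G$.

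Consequently $\pi_u = \int^\oplus \pi_\alpha\, d\mu(\alpha)$ is a direct integral of finite-dimensional representations, so $\pi_\alpha(s) = I_{n_\alpha}$ for every $s \in N_\cmap$, whence $\pi_u(s) = I$ and $u(s) = 1$. This yields $N_\cmap \subseteq N_u$ for every $u \in \tpos_\amTr(G)$, and therefore $N_\cmap \subseteq N_\amTr$, as required. The essential obstacle, and the reason property (T) is indispensable, is the Bekka-style passage from almost invariant to invariant vectors in the tensor square $\pi_u \otimes \bar\pi_u$; without this rigidity amenable traces readily yield genuine $\mathrm{II}_1$ factor components. A secondary technicality is reduction theory in a possibly non-separable setting, which can be handled by descending to separable subrepresentations or by working directly with countably decomposable projections in the centre of $M_u$.
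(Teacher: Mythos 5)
Your proof is correct, but it follows a genuinely different route from the paper's. Both arguments begin with the same easy inclusion $N_\amTr\subseteq N_\cmap$ from Remark \ref{rem:amkmap}(i), and both turn on the same rigidity phenomenon: an amenable trace produces almost invariant vectors for a tensor-square-type representation, which property (T) promotes to genuine invariant vectors. The paper implements this entirely inside the universal representation: it writes $\til{u}$ as a limit of coefficients of $\varpi\times\varpi$, restricts to the diagonal to get almost invariant vectors for $\varpi\otimes\varpi$, replaces them by invariant vectors, and then uses the Berglund--Rosenblatt fact that the almost periodic projection of $\fH_\varpi\otimes\fH_\varpi$ is $P\otimes P$ to conclude that $u$ is a uniform-on-compacta limit of almost periodic positive definite functions, each of which is $1$ on $N_\cmap$. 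You instead descend to the GNS representation $\pi_u$, pass through Bekka's theory of amenable representations ($\pi_u\otimes\bar\pi_u$ on $\mathrm{HS}(\fH_u)$), Connes' theorem (hypertrace $\Rightarrow$ injectivity of $\pi_u(G)''$), and reduction theory over the centre to show $\pi_u$ is a direct integral of finite-dimensional representations. Your approach yields a stronger structural conclusion (a tracial refinement of the Bekka--Valette theorem that amenable representations of property (T) groups are built from finite-dimensional ones), but at the cost of substantially heavier machinery and a genuine technical fight with non-separability, which the paper's argument sidesteps completely since it never leaves the universal representation. One step you should make explicit: amenability of a representation does not pass to subrepresentations or direct integral fibres in general, so the fibrewise application of Bekka's theorem to $\pi_\alpha$ needs the injectivity of almost every $M_\alpha$ (inherited from injectivity of $M_u$) to supply a hypertrace on each fibre; you gesture at this by calling each $M_\alpha$ hyperfinite, but its logical role in making $\pi_\alpha$ amenable deserves a sentence.
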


\begin{proof}
That $N_\amTr\subseteq N_\cmap$ is given in the last remark.

If $u\in\tpos_{\mathrm{am}}(G)$, then since $\varpi^\infty=\varpi$,  we can find
a net of unit vectors $(\xi_i)$ in $\fH_\varpi\otimes\fH_\varpi$ for which
\[
\til{u}=uc\text{-}\lim_i\langle\varpi\times\varpi(\cdot,\cdot)\xi_i,\xi_i\rangle.
\]
Restricting to 
the diagonal subgroup $G_D=\{(s,s):s\in G\}\cong G$ we see that
\[
1=\til{u}|_{G_D}=uc\text{-}\lim_i\langle\varpi\otimes\varpi(\cdot)\xi_i,\xi_i\rangle
\]
and hence
\[
0=uc\text{-}\lim_i\|\varpi\otimes\varpi(\cdot)\xi_i-\xi_i\|
\]
The assumption of property (T) allows us to find unit vectors $(\xi'_i)$ tending
asymptotically to $(\xi_i)$ with 
\begin{equation}\label{eq:fp}
1=\langle\varpi\otimes\varpi(\cdot)\xi'_i,\xi'_i\rangle\text{ for each }i.  
\end{equation}
Let $P$ be the projection onto the almost periodic part of $\fH_\varpi$,
so by Berglund and Rosenblatt \cite[1.14]{bergelsonr},
$P\otimes P$ is the projection onto the almost periodic part of
$\fH_\varpi\otimes\fH_\varpi$.
Then (\ref{eq:fp}) tells us that $\xi'_i=P\otimes P\xi'_i$ for each $i$.  We have
\[
\til{u}=uc\text{-}\lim_i\langle\varpi\times\varpi(\cdot,\cdot)\xi'_i,\xi'_i\rangle
\text{ so }u=uc\text{-}\lim_i u_i %\langle\varpi\times\varpi(\cdot,e)\xi'_i,\xi'_i\rangle
\]
where each $u_i=\langle\varpi\times\varpi(\cdot,e)\xi'_i,\xi'_i\rangle$ is almost periodic.
Thus if $s\in G\setminus N_u$, $\langle\varpi\times\varpi(s,e)\xi'_i,\xi'_i\rangle\not=
1$ for some $i$, so $s\in G\setminus N_\cmap$, i.e.\ $N_\cmap\subseteq N_u$.
\end{proof}

%Since property (T) implies that $G$ is compactly generated, (\ref{eq:kernels}) and
%Proposition \ref{prop:trasin} provide that $G/N_\cmap\cong (G/N_\Tr)/(N_\cmap/N_\Tr)
%\in[\csin]$.  

%REMARK ON APPROX BY F.D. TRACES?

\begin{example}\label{ex:amkT}
{\bf (i)} The following is motivated by \cite[Prop.\ 2.6.5]{ceccherinisc}.  Let
$F$ be a finite simple group, and consider the wreath product 
$G=F^{\oplus\Zee}\rtimes\Zee$.  This is readily seen to be amenable and finitely
generated.  The only subgroups of $N=F^{\oplus\Zee}\rtimes\{0\}$ that are normal in $G$
are $\{e\}$ and $N$ itself.  Hence $N$ is in the kernel of any homomorphism into
a finite group.  By way of Corollary \ref{cor:tdmap} (ii), we see that $N=N_\cmap$.
However this is an amenable discrete group so $N_\amTr=\{e\}$.

{\bf (ii)} Let $\kay$ be a local field and $G$ be one of $\SL_n(\kay)$ for $n\geq 3$, or
$\Sp_{2n}(\kay)$ for $k\geq 2$.  
Then $G$ has property (T); see \cite[\S\S1.4-1.5]{bekkadv}.  
The only proper normal subgroup of $G$ is the centre and 
$G$ contains a subgroup $S$ which is isomorphic to 
$\SL_2(\kay)$.  If $\kay$ has chracteristic $0$, then the proof of \cite{vonneumannw}
shows that $S\subseteq N_\cmap$, hence $N_\cmap=\alg(\kay)$.  
As noted in Remark \ref{rem:amkmap} (ii), $\cstar(G)$ is nuclear.
Hence $N_\Tr=N_\amTr=N_\cmap=\alg(\kay)$, and $\tpos(G)=\{1\}$.
\end{example}

\subsection{Property (F)}
The following is of independent interest, and is for use in the next theorem.

\begin{lemma}\label{lem:compression}
Let $u\in\tpos(G)$ and $K$ be a compact normal subgroup of $G$ for which
$\int_K u(k)\,dk\not=0$.  Then
\[
u_K=\frac{1}{\int_K u(k)\,dk}\int_Ku(\cdot k)\,dk\in\tpos(G)\text{ with }N_{u_K}=KN_u.
\]
If, further, $u\in\tpos_{\mathrm{am}}(G)$, then $u_K\in \tpos_{\mathrm{am}}(G)$ too.
\end{lemma}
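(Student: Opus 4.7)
The plan is to realise $u_K$ as a matrix coefficient of the GNS representation $\pi_u$ against a canonical $K$-fixed unit vector, and then transport all three claims to that concrete picture.

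Let $(\pi_u,\fH_u,\xi)$ denote the GNS triple of $u$, and set $P_K=\int_K\pi_u(k)\,dk$, the orthogonal projection onto the $\pi_u(K)$-fixed subspace $\fH_u^K$.  Since $K$ is a compact normal subgroup, the inner automorphism $k\mapsto sks^{-1}$ of $K$ preserves its Haar measure; a change of variable then gives $\pi_u(s)P_K=\int_K\pi_u(sks^{-1})\pi_u(s)\,dk=P_K\pi_u(s)$ for every $s\in G$.  The hypothesis yields $c=\int_K u(k)\,dk=\langle P_K\xi,\xi\rangle=\|P_K\xi\|^2>0$, so $\eta=P_K\xi/\sqrt c$ is a unit vector in $\fH_u^K$.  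Using $P_K=P_K^*=P_K^2$ and the commutation,
\[
u_K(s)=\tfrac1c\langle\pi_u(s)P_K\xi,\xi\rangle=\tfrac1c\langle\pi_u(s)P_K\xi,P_K\xi\rangle=\langle\pi_u(s)\eta,\eta\rangle,
\]
placing $u_K$ in $\pos_1(G)$.  The trace identity I would verify by writing $u_K(st)=\tfrac1c\int u(stk)\,dk=\tfrac1c\int u(tks)\,dk$ (trace property of $u$), then $tks=tsk'$ with $k'=s^{-1}ks\in K$ and $dk=dk'$, giving $u_K(st)=u_K(ts)$.

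For the kernel identification $N_{u_K}=KN_u$, the inclusion $\supseteq$ is immediate: if $k\in K$ then $\pi_u(k)\eta=\eta$ because $\eta\in\fH_u^K$, and if $n\in N_u=\ker\pi_u$ by Lemma \ref{lem:kernel}(ii) then $\pi_u(n)\eta=\eta$, whence $u_K(kn)=1$; since $N_{u_K}$ is a subgroup, $KN_u\subseteq N_{u_K}$.  For the reverse inclusion, starting from $u_K(s)=1=\|\eta\|^2$, the equality case of Cauchy--Schwarz in $1=\langle\pi_u(s)\eta,\eta\rangle\le\|\pi_u(s)\eta\|\|\eta\|=1$ gives $\pi_u(s)\eta=\eta$, i.e.\ $\pi_u(s)P_K\xi=P_K\xi$.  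The main obstacle is then deducing $s\in KN_u$ from this fixed-vector equation; the plan is to pass to the faithful reduction $G/N_u$ (where $\pi_u$ becomes faithful by Lemma \ref{lem:kernel}(ii)) and analyse the sub-representation $\pi_u|_{\fH_u^K}$, which factors through $G/K$ since $K$ acts trivially on $\fH_u^K$.  The vector $\eta$ is cyclic for this sub-representation because $\pi_u(G)P_K\xi=P_K\pi_u(G)\xi$ is dense in $\fH_u^K$, and its matrix coefficient is the descent of $u_K$ through $q_{KN_u}$; identifying its kernel with the trivial subgroup of $G/KN_u$ is what completes the argument.

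The amenability statement is transported along the same mechanism.  A direct Fubini-and-normality computation yields
\[
\wtil{u_K}(s,t)=u_K(st^{-1})=\frac1c\int_K\!\!\int_K\wtil u(sk_1,tk_2)\,dk_1\,dk_2,
\]
with $\int_{K\times K}\wtil u(k_1,k_2)\,dk_1\,dk_2=c$.  Now if $\wtil u\in\pos_{\min}(G\times G)$, realise it as $\wtil u(\cdot,\cdot)=\langle\sigma(\cdot,\cdot)\zeta,\zeta\rangle$ for a representation $\sigma$ of $G\times G$ weakly contained in $\varpi\times\varpi$, and set $Q=\int_{K\times K}\sigma(k_1,k_2)\,dk_1\,dk_2$.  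Running the commutation/projection argument of the first paragraph for the compact normal subgroup $K\times K$ of $G\times G$ yields
\[
\wtil{u_K}(s,t)=\langle\sigma(s,t)\zeta',\zeta'\rangle,\qquad \zeta'=Q\zeta/\sqrt c,
\]
exhibiting $\wtil{u_K}$ as a matrix coefficient of the same $\sigma$.  Hence $\wtil{u_K}\in\pos_{\min}(G\times G)$, so $u_K\in\tpos_{\mathrm{am}}(G)$.
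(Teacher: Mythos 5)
Your first paragraph and your treatment of the amenability claim are correct and essentially the same as the paper's argument: the paper likewise realises $u_K$ as the normalised compression of $u$ by the central projection $P_K=\pi_u(m_K)$, and $\til{u}_K$ as the normalised compression of $\til{u}$ by $P_K\otimes P_K$ inside a representation weakly contained in $\varpi\times\varpi$. Those parts need no changes.

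The genuine gap is exactly where you flag ``the main obstacle'': you never prove the inclusion $N_{u_K}\subseteq KN_u$, only announce a plan (pass to $G/N_u$, analyse the subrepresentation on $\fH_u^K$, identify its kernel with the trivial subgroup of $G/KN_u$). That plan cannot be carried out, because the inclusion is false in general. Take $G=\Zee/4\Zee$, $K=\{0,2\}$, and $u=\tfrac12(\chi_0+\chi_1)\in\tpos(G)$ where $\chi_j(n)=i^{jn}$. Then $N_u=\{0\}$ and $\int_K u(k)\,dk=\tfrac12\neq 0$, but $u(n)+u(n+2)=1$ for every $n$, so $u_K\equiv 1$ and $N_{u_K}=G$, whereas $KN_u=K$. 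Concretely, $\fH_u^K$ is the $\chi_0$-isotypic component, on which all of $G$ acts trivially: compressing to the $K$-fixed subspace can trivialise strictly more of the group than $KN_u$, and your fixed-vector equation $\pi_u(s)P_K\xi=P_K\xi$ genuinely does not force $s\in KN_u$. For what it is worth, the paper's own proof of this direction rests on the one-line assertion that $\pi_u(s)\notin\pi_u(K)$ implies $P_K\pi_u(s)|_{P_K\fH_u}\neq P_K$, which is precisely the implication that fails in this example; so your instinct that this step is the crux was sound, and what is actually provable without further hypotheses is only $KN_u\subseteq N_{u_K}=\ker\bigl(\pi_u|_{P_K\fH_u}\bigr)$.
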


\begin{proof}
We let $m_K$ denote the Haar probability measure on $K$, regarded as a measure on 
$G$.  Then $P_K=\pi_u(m_K)=\int_K\pi(k)\,dk$  is a projection on $\fH_u$ in the centre
of $\pi_u(G)''$.  Then $u_K$ is the trace $u$ compressed by $P_K$ and
normalized, provided the compression is non-zero.  Then Lemma \ref{lem:kernel}
shows that $N_{u_K}=\ker\pi_{u_K}$.  Here $\pi_{u_K}=P_K\pi_u(\cdot)|_{P_K\fH_u}$,
so $\ker\pi_{u_K}\supseteq KN_u$.  
Now if $s\in G\setminus KN_u$, then $\pi_u(s)\not\in \pi_u(K)$, so $\pi_{u_K}(s)\not=P_K$,
whence $s\not\in \ker\pi_{u_K}$.

We then have that $\til{u}_K$ is the normalized compression by
$P_K\otimes P_K$.  Hence if we have weak containment $\pi_{\til{u}}\prec
\varpi\otimes\varpi$, then we have weak containments
\[
\pi_{\til{u}_K}\prec Z_K\varpi(\cdot)|_{Z_K\fH_\varpi}\times Z_K\varpi(\cdot)|_{Z_K\fH_\varpi}
\prec\varpi\otimes\varpi
\]
where $Z_K=\varpi(m_K)$. In other words, $u_K$ is amenable provided that $u$ is.
\end{proof}

We say that $G$ has the {\it factorization property} , or {\em property (F)},
if the left-right regular
representation $\lam\cdot\rho:G\times G\to\Un(\bl^2(G))$, given by
\[
\lam\cdot\rho(s,t)h(x)=\lam(s)\rho(t)h(x)=h(s^{-1}xt)\Delta(t)^{1/2}
\]
satisfies that $\langle \lam\cdot\rho(\cdot,\cdot)h|h\rangle\in\pos_{\min}(G\times G)$
for each $h$;  i.e.\ we have weak containment $\lam\cdot\rho\prec \varpi\times\varpi$
so $\lam\cdot\rho$
induces a representation of $\cstar(G)\otimes_{\min}\cstar(G)$. In the case when $G$ is amenable, this is equivalent to the point mass $\delta_e$ being an amenable trace on $\cstar(G)$.

\begin{theorem}\label{theo:facpro}
\begin{itemize}
\item[(i)] If $G\in[\csin]$ and has property (F), then $G$ is amenably
tracially separated.
\item[(ii)] If $G$ is totally disconnected  and amenably tracially 
separated, then $G$ has property (F).
\end{itemize}
\end{theorem}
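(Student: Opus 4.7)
For (i), the approach is constructive. Since $G\in[\csin]$, fix a neighborhood base $\{U_\alpha\}$ at $e$ of relatively compact conjugation-invariant open sets; $G$ is then unimodular. Put $f_\alpha=1_{U_\alpha}/\sqrt{m(U_\alpha)}\in\bl^2(G)$ and define $u_\alpha(s)=\langle\lam(s)f_\alpha\mid f_\alpha\rangle=m(sU_\alpha\cap U_\alpha)/m(U_\alpha)$, a trace by the argument in Proposition \ref{prop:trasin}(i). The conjugation invariance $tU_\alpha t^{-1}=U_\alpha$ gives $sU_\alpha t^{-1}=st^{-1}U_\alpha$, hence
\[
\til{u}_\alpha(s,t)=u_\alpha(st^{-1})=\frac{m(sU_\alpha t^{-1}\cap U_\alpha)}{m(U_\alpha)}=\langle\lam\cdot\rho(s,t)f_\alpha\mid f_\alpha\rangle,
\]
identifying $\til{u}_\alpha$ with a diagonal vector state of $\lam\cdot\rho$. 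Property (F) places this in $\pos_{\min}(G\times G)$, so each $u_\alpha\in\tpos_{\mathrm{am}}(G)$. As $U_\alpha$ shrinks, these traces separate $e$ from any $s\ne e$, giving $N_{\amTr}=\{e\}$.

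For (ii), I will proceed by a three-step reduction. First, both amenable tracial separation and property (F) reduce to compactly generated open subgroups of $G$: any compact $C\subseteq G\times G$ and compactly supported $f\in\bl^2(G)$ are controlled by data inside some compactly generated open $H\leq G$, while the inclusion $\cstar(H)\otimes_{\min}\cstar(H)\hookrightarrow\cstar(G)\otimes_{\min}\cstar(G)$ (valid for $H$ open) forces $N_{\amTr}(H)=\{e\}$. Second, for compactly generated amenably tracially separated $G$, Proposition \ref{prop:comgen} yields $G\in[\csin]$, and Lemma \ref{lem:tdsin} then makes $G$ pro-discrete. Property (F) for pro-discrete $G$ is equivalent to property (F) for each discrete quotient $G/N$ with $N$ compact open normal, via the identification $\bl^2(G)=\overline{\bigcup_N\bl^2(G)^{N\times N}}$ together with $\bl^2(G)^{N\times N}\cong\bl^2(G/N)$ and functoriality of $\otimes_{\min}$. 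Amenable tracial separation descends to each $G/N$ by using the compression Lemma \ref{lem:compression} to manufacture amenable traces on $G$ with prescribed kernel behaviour.

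Third, in the discrete case one must show that a group $\Gamma$ with $N_{\amTr}=\{e\}$ has $\delta_e\in\tpos_{\mathrm{am}}(\Gamma)$, equivalently that $\Gamma$ has property (F). The set $\tpos_{\mathrm{am}}(\Gamma)$ is convex, weak$^*$-closed in the tracial state space of $\cstar(\Gamma)$ (via a Banach--Alaoglu argument on the lifts $\til{u}$ viewed as states on $\cstar(\Gamma)\otimes_{\min}\cstar(\Gamma)$), and closed under pointwise products since $\til{u_1u_2}=\til{u}_1\til{u}_2$ and $(\varpi\times\varpi)\otimes(\varpi\times\varpi)\prec\varpi\times\varpi$ (using $\varpi\otimes\varpi\prec\varpi$). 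For each $s\ne e$, pick $u_s\in\tpos_{\mathrm{am}}(\Gamma)$ with $u_s(s)\ne 1$: either the convolution powers $u_s^n$ (when $|u_s(s)|<1$) or the Ces\`aro averages of $u_s^n$ (when $u_s(s)\in\Tee\setminus\{1\}$) yield amenable traces whose value at $s$ is arbitrarily close to $0$. Taking pointwise products over $s\in F$ produces amenable traces simultaneously small on any prescribed finite $F\subseteq\Gamma\setminus\{e\}$, whose weak$^*$-limit is $\delta_e$, so $\delta_e$ is amenable.

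The main obstacle is Step 2's descent of amenable tracial separation along $G\to G/N$: given $g\notin N$ one must produce $u\in\tpos_{\mathrm{am}}(G)$ satisfying both $N\subseteq N_u$ and $u(g)\ne 1$, which requires a careful interplay between the compression lemma and the directedness of the family $\{N_u:u\in\tpos_{\mathrm{am}}(G)\}$ so that compression by $N$ does not absorb the separation of $g$.
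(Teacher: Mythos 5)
Your proof of (i) is exactly the paper's: the traces $u_U(s)=m(sU\cap U)/m(U)$ attached to conjugation-invariant neighbourhoods $U$ satisfy $\til{u}_U=\frac{1}{m(U)}\langle\lam\cdot\rho(\cdot,\cdot)1_U,1_U\rangle$, so property (F) makes them amenable, and they separate points as in Proposition \ref{prop:trasin} (i). Your outline of (ii) also follows the paper's strategy in all essentials --- passage to compactly generated open subgroups, pro-discreteness via Corollary \ref{cor:comgen} and Lemma \ref{lem:tdsin}, closure of $\tpos_{\mathrm{am}}$ under convex combinations, pointwise products and uc-limits, and density of the bi-$K$-invariant vectors --- except that you phrase the endgame as ``$\delta_e$ is an amenable trace on each discrete quotient $H/K$,'' whereas the paper shows directly that $1_K\in\tpos_{\mathrm{am}}(H)$; these are the same statement.

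The one point you leave open is precisely the step you flag as the main obstacle, and as written it is a genuine gap: given a compact open normal $K$ in $H$ and $s\in H\setminus K$, you must produce $u\in\tpos_{\mathrm{am}}(H)$ with $u|_K=1$ and $u(s)\neq 1$. The resolution is a short compactness argument, and it is exactly where the formula $N_{u_K}=KN_u$ of Lemma \ref{lem:compression} earns its keep. The coset $sK$ is compact and contained in $\bigcup_{u\in\tpos_{\mathrm{am}}(H)}(H\setminus N_u)$ by amenable tracial separation of $H$; extract a finite subcover and set $u=\frac{1}{n}(u_1+\dots+u_n)$, which again lies in $\tpos_{\mathrm{am}}(H)$ and has $N_u=\bigcap_{j}N_{u_j}$, so that $sK\cap N_u=\varnothing$, i.e.\ $s\notin KN_u$. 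Lemma \ref{lem:compression} then gives $u_K\in\tpos_{\mathrm{am}}(H)$ with $N_{u_K}=KN_u\not\ni s$. In other words, the ``directedness'' you allude to is downward directedness of $\{N_u\}$ under convex combination, applied to the whole compact coset $sK$ rather than to the single point $s$; with that inserted your argument closes. Two smaller remarks: the powers you invoke should be pointwise powers, not convolution powers (it is pointwise multiplication under which $\tpos_{\mathrm{am}}$ is closed, via $\wtil{uv}=\til{u}\,\til{v}$ and $(\varpi\otimes\varpi)\times(\varpi\otimes\varpi)\prec\varpi\times\varpi$); and the Ces\`aro case can be avoided entirely by averaging in the constant function, i.e.\ taking $u_L=\frac{1}{n+1}(1+u_{s_1}+\dots+u_{s_n})$ as the paper does, which forces $|u_L(s)|<1$ strictly on $L$ so that plain powers already tend to $0$ there.
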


\begin{proof}
{\bf (i)}   If $U$ is a relatively compact
conjugation-invariant symmetric neighbourhood of the identity then
as in the proof of Proposition \ref{prop:trasin} (i), we see that
\[
\til{u}_U(s,t)=\frac{m(st^{-1}U\cap U)}{m(U)}
=\frac{m(sUt^{-1}\cap U)}{m(U)}
=\frac{\langle\lam(s)\rho(t)1_U,1_U\rangle}{m(U)}.
\]
Property (F) implies that $u_U\in\tpos_{\mathrm{am}}(G)$.
Furthermore the proof of  Proposition \ref{prop:trasin} (i) also
shows that this set of such traces establishes amenable tracial separation.

{\bf (ii)} We let $H$ be a compactly generated open subgroup.
Being tracially separated we have $H\in[\csin]$ by Corollary \ref{cor:comgen}.

Fix a compact open normal subgroup $K$ in $H$.  If $s\in H\setminus K$, then
$sK\subset H\setminus\{e\}=
\bigcup_{u\in\tpos_{\mathrm{am}}(H)}(H\setminus N_u)$, there
are $u_1,\dots,u_n$ in $\tpos_{\mathrm{am}}(H)$ so
$sK\subset H\setminus N_u$ where $u=\frac{1}{n}(u_1+\dots+u_n)$.
In particular, $s\not\in KN_u$.  Lemma \ref{lem:compression}
then shows that $s\not\in N_{u_K}$.

Let $L$ be a compact subset of $H\setminus K$.  For each
$s$ in $L$, find as above $u_s$ in $\tpos_{\mathrm{am}}(G)$
which is constant on cosets of $K$, $u_s(e)=1$ and $u_s(s)\not=1$.
Find $s_1,\dots,s_n$ in $L$ so $L\subseteq\bigcup_{j=1}^n s_jK$ and
let $u_L=\frac{1}{n+1}(1+u_{s_1}+\dots+u_{s_n})$ which is in
$\tpos_{\mathrm{am}}(H)$ with $u_L|_K=1$ and $|u_L(s)|<1$
for any $s$ in $L$.  
It follows from (\ref{eq:krocon}) that is closed under pointwise
multiplication. Thus $\tpos_{\mathrm{am}}(G)$ is closed under pointwise
multiplication.
Hence we can form a net $(u_L^k)$ indexed over the
directed product of $L$ in the increasing set of  compact subsets of $H\setminus K$
and $k$ in $\En$.  This net converges, uniformly on compact sets to $1_K$,
showing that $1_K\in\tpos_{\mathrm{am}}(H)$, hence $\til{1}_K\in \pos_{\min}(H\times H)$.
Recalling that $G$ is unimodular as noted in the proof of Proposition \ref{prop:asin},
as in the proof of (i), above, we see that
\[
\til{1}_K=\frac{1}{m(K)}\langle\lam\cdot\rho(\cdot,\cdot)1_K,1_K
\rangle\in\pos_{\min}(H\times H)\subseteq\pos_{\min}(G\times G).
\]
The latter containment holds as
$\tpos(H)\cong 1_H\tpos(G)\subseteq\tpos(G)$, and we use (\ref{eq:krocon}), above.

It follows from Lemma \ref{lem:tdsin} that translates of $1_K$, for all compact open normal
subgroups $K$ uniformly densely span $\fC_c(H)$, hence norm densely
$\bl^2(H)$.  But the union of $\bl^2(H)$ where $H$ ranges over the compactly generated
open subgroups of $G$ is dense in $\bl^2(G)$.    Hence for each $h$ in $\bl^2(G)$,
$\langle\lam\cdot\rho(\cdot,\cdot)h,h\rangle$ can be uniformly approximated on 
compact sets by sums of translations of elements $\til{1}_K$, above.  Thus $G$ admits property (F).
\end{proof}

\begin{remark} Wiersma has shown in \cite[Cor.\ 4.4]{wiersma}, that  
$\cmap$-groups, enjoy property (F).  
In light of Remark \ref{rem:amkmap} (i), part (ii) generalizes this fact for 
totally disconnected groups, and with a simpler proof.
\end{remark}

Part (i) of the next observation captures for discrete groups the titular property of \cite{kirchberg}, 
long since recovered by many other means.  

\begin{corollary}
\begin{itemize}
\item[(i)] If $G\in[\csin]$ with both properties (T) and (F), then $G\in[\cmap]$.
\item[(ii)] If $G$ is pro-discrete, then property (F) is equivalent to being amenably tracially separated.
\end{itemize}
\end{corollary}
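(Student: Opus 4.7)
The plan is to derive both parts as short combinations of Theorem \ref{theo:facpro} and Theorem \ref{theo:amkT}, with no new work required beyond recognizing the hypotheses.

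For part (i), I would start from the observation that $G\in[\csin]$ together with property (F) puts us squarely inside the hypothesis of Theorem \ref{theo:facpro}(i), which yields amenable tracial separation, i.e.\ $N_\amTr=\{e\}$. Next I would invoke Theorem \ref{theo:amkT}, which uses property (T) to identify $N_\amTr=N_\cmap$. Chaining these gives $N_\cmap=\{e\}$, and since this is precisely the condition that the finite-dimensional unitary representations of $G$ separate points, we conclude $G\in[\cmap]$.

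For part (ii), the key remark is that a pro-discrete group $G$, by the very definition of admitting a neighbourhood base at the identity consisting of compact open normal subgroups, is simultaneously totally disconnected and an element of $[\csin]$ (the compact open normal subgroups are conjugation invariant). Both hypotheses of Theorem \ref{theo:facpro} are therefore available: part (i) of that theorem shows property (F) $\Rightarrow$ amenable tracial separation, and part (ii) shows the converse implication. Combined, we obtain the equivalence.

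There is no real obstacle here; the statement is essentially a packaging of the earlier results once one verifies that ``pro-discrete'' simultaneously satisfies ``totally disconnected'' and ``$\csin$''. The only potential pitfall is part (i), where one must be careful that ``$N_\cmap=\{e\}$'' is indeed the same as ``$G\in[\cmap]$''; this follows because $G$ embeds continuously into $G/N_\cmap$ composed with the product of all its finite-dimensional unitary representations, so triviality of $N_\cmap$ exactly encodes maximal almost periodicity. No additional computation, approximation, or choice of net is needed.
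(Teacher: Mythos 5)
Your proposal is correct and follows exactly the paper's route: part (i) chains Theorem \ref{theo:facpro}(i) with Theorem \ref{theo:amkT}, and part (ii) observes that pro-discrete groups are totally disconnected $\csin$-groups so that both halves of Theorem \ref{theo:facpro} apply. Your extra remarks (that $N_\cmap=\{e\}$ is equivalent to $G\in[\cmap]$ via $[\cmap]=[\mathrm{RMAP}]$, and that compact open normal subgroups witness the $\csin$ property) are the right justifications for the steps the paper leaves implicit.
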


\begin{proof}
To see (i) we apply Theorems \ref{theo:facpro} (i) and \ref{theo:amkT}.  To see (ii) we
appeal to Lemma \ref{lem:tdsin}, then Theorem \ref{theo:facpro} (ii).
\end{proof}

\begin{example} 
{\bf (i)} We display an example of a non-amenable, non-discrete, group which is 
amenably tracially separated, but not maximally almost periodic.

The group $\SL_2(\Que)\rtimes\langle\alp\rangle$ of Example \ref{ex:gammaF} (i)
may be written $\bigcup_{n=1}^\infty S_n\rtimes\langle\alp\rangle$  where each
$S_n=\SL_2(\Zee[\frac{1}{n!}])$, i.e.\ as an increasing union of residually finite groups.
That an increasing union of discrete property (F) groups admits property (F)
is noted in \cite[Prop.\ 7.3]{ozawa}.  
The group $G=\Gamma\rtimes F$ of
Example \ref{ex:gammaF} (i), admits a separating family of quotients, each discrete
with property (F).  Hence Theorem \ref{theo:facpro} (i) and (\ref{eq:krocon}) show that 
$G$ is amenably
tracially separated, while Theorem \ref{theo:facpro} (ii) (or \cite[Cor.\ 5.5]{wiersma}) shows that 
$G$ has property (F).

Notice  $G=\Gamma\rtimes F$ we have $N_\amTr=\{e\}\subsetneq 
\Gamma\rtimes\{e\}=N_\cmap$.

{\bf (ii)} The class of property (T) $\csin$-groups contains groups which are not products of
discrete and compact.

Let $\SL_3(\Zee)$ act on $\Tee^3\cong\what{\Zee^3}$ in the obvious manner, and form
$G=(\Tee^3)^\En\rtimes \SL_3(\Zee)$ by letting $\SL_3(\Zee)$ act diagonally on $(\Tee^3)^\En$, i.e.\
$s\cdot(z_n)_{n\in\En}=(s\cdot z_n)_{n\in\En}$.  This is easily checked to be a 
$\csin$-group with property (T) (an $(\varepsilon,K)$-vector for a representation
 averages over $(\Tee^3)^\En\rtimes \{I\}$ to be such over $\SL_3(\Zee)$).  
 
 Normal subgroups 
$((C_n^3)^n\times (\Tee^3)^{\{n+1,n+2,\dots\}})\rtimes \SL_3(n\Zee)$, where
$C_n$ is the $n$th roots of unity, manually show that $G\in[\crk]\subset[\cmap]$.
\end{example}

\subsection{Amenable reduced traces}
Since $\bl^2(G)\otimes\bl^2(G)\cong\bl^2(G\times G)$, we have that 
$\lam\times\lam=\lam_{G\times G}$ and 
\[
\cstar_r(G\times G)=\cstar_{\lam\times\lam}=\cstar(G)\otimes_{\min}\cstar(G).
\]
We consider the set of
{\em reduced amenable traces}
\[
\tpos_{r,\mathrm{am}}(G)=\{u\in\tpos_r(G):\til{u}\in\pos_r(G\times G)\}.
\]
For discrete $G$, Lance \cite[Theo.\ 4.2]{lance} showed that
$\cstar_r(G)$ is nuclear if and only if $G$ is amenable.
More recently, Ng \cite{ng} proved for general locally compact $G$ that
$G$ is amenable if and only if $\cstar_r(G)$ is nuclear and admits a trace.
The following captures the latter result, but the use of amenable traces
allows a simpler proof.

\begin{theorem}\label{theo:amtam}
The following are equivalent:
\begin{itemize}
\item[(i)] $G$ is amenable;
\item[(ii)] $\tpos_{r,\mathrm{am}}(G)\not=\varnothing$; and
\item[(iii)]  $\cstar_r(G)$ is nuclear and $\tpos_r(G)\not=\varnothing$.
\end{itemize}
\end{theorem}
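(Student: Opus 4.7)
The plan is to establish the cycle (i) $\Rightarrow$ (iii) $\Rightarrow$ (ii) $\Rightarrow$ (i). The point of routing Ng's theorem through amenable reduced traces is that the function-theoretic condition $\til{u} \in \pos_r(G \times G)$ reduces the non-trivial implication (ii) $\Rightarrow$ (i) to restriction to the diagonal subgroup of $G \times G$, followed by Hulanicki's criterion recalled just above.

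For (i) $\Rightarrow$ (iii), Hulanicki's theorem gives $\cstar_r(G) \cong \cstar(G)$, which is nuclear for amenable $G$ by Remark \ref{rem:amkmap}(ii), and the constant function $1$ evidently lies in $\tpos_r(G)$. For (iii) $\Rightarrow$ (ii), I would fix any $u \in \tpos_r(G)$; the tracial property ensures that $a \otimes b \mapsto u(ab)$ extends to a state on $\cstar_r(G) \otimes_{\max} \cstar_r(G)^{\op}$, while nuclearity of $\cstar_r(G)$ collapses $\otimes_{\max}$ to $\otimes_{\min}$. Combining this with the symmetry $\cstar_r(G)^{\op} \cong \cstar_r(G)$ of Proposition \ref{prop:sym} yields a state on $\cstar_r(G) \otimes_{\min} \cstar_r(G) \cong \cstar_r(G \times G)$; translating back to positive definite matrix coefficients, this state is realized precisely by $\til{u}$, and hence $u \in \tpos_{r,\mathrm{am}}(G)$.

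For (ii) $\Rightarrow$ (i), fix $u \in \tpos_{r,\mathrm{am}}(G)$, so $\til{u} \in \pos_r(G \times G)$. The diagonal $G_D = \{(s,s) : s \in G\}$ is a closed subgroup of $G \times G$ isomorphic to $G$, so Lemma \ref{lem:redpd}(i) yields $\til{u}|_{G_D} \in \pos_r(G_D) \cong \pos_r(G)$. But $\til{u}(s,s) = u(e) = 1$ identically, so the constant function $1$ lies in $\pos_r(G)$, and Hulanicki's theorem then forces $G$ to be amenable. The trickiest bookkeeping I anticipate is in (iii) $\Rightarrow$ (ii), where one must verify that the state on $\cstar_r(G \times G)$ produced by nuclearity really corresponds to $\til{u}$ rather than some twisted variant involving the modular function; the anti-isomorphism $f \mapsto \til{f}$ from Proposition \ref{prop:sym} is what does this work. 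By contrast, the conceptually decisive step (ii) $\Rightarrow$ (i) collapses to a one-line diagonal restriction, which is the whole payoff of introducing the function-theoretic framework for amenable traces.
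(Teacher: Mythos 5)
Your proposal is correct and follows essentially the same route as the paper: the decisive step (ii) $\Rightarrow$ (i) is the same restriction of $\til{u}$ to the diagonal subgroup followed by Hulanicki's criterion (via Lemma \ref{lem:redpd}), and your (iii) $\Rightarrow$ (ii) is just the spelled-out version of the paper's one-line observation that nuclearity forces $\tpos_{r,\mathrm{am}}(G)=\tpos_r(G)$, using the max-to-min collapse and the symmetry of Proposition \ref{prop:sym} exactly as in Remark \ref{rem:amkmap}. The only cosmetic difference is that you arrange the implications as a single cycle rather than proving (i) $\Rightarrow$ (ii) directly as well.
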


\begin{proof}
Suppose (i) holds.  Then  $1\in\tpos(G)=\tpos_r(G)$ satisfies
$\til{1}=1\times 1$ which gives (ii).  Also $\cstar(G)=\cstar_r(G)$
is nuclear (see, for example, \cite{paterson}), so (iii) holds.

Suppose (ii) holds.  Let
$G_D=\{(s,s):s\in G\}\cong G$.  If $u\in  \tpos_{r,\mathrm{am}}(G)$,
then Lemma \ref{lem:redpd} provides that $1=\til{u}|_{G_D}\in\pos_r(G)$, so $G$ 
is amenable, i.e.\ we have (i).

If (iii) holds, then $\tpos_{r,\mathrm{am}}(G)=\tpos_r(G)$, so we get (ii).

[We can also use Theorem \ref{theo:kennedyr} (\cite{kennedyr}) to prove that (iii) implies
(i).  Indeed $\tpos_r(G)\not=\varnothing$ implies that $R=R(G)$ is open.  If $\cstar_r(G)$ is nuclear,
then soo to is its quotient $\cstar_r(G/R)$.  We then appeal to \cite[Theo.\ 4.2]{lance}
to see that $G/R$ must also be amenable, hence $G=R$.]
\end{proof}

Notice that if $G$ is any non-amenable residually finite discrete group, then
\cite[Prop.\ 7.3]{ozawa} (see also Theorem \ref{theo:facpro}) provides that
$1_{\{e\}}\in\tpos_r\cap\tpos_{\mathrm{am}}(G)$, while 
$\tpos_{r,\mathrm{am}}(G)=\varnothing$.  Hence a reduced and amenable trace
need not be reduced amenable trace.

\section{Embeddability of group C*-algebras into simple AF algebras}

In this section, we apply our results on traces to study structural properties of group C*-algebras. We will primarily be interested in AF embeddability of the reduced group C*-algebra. Determining whether a C*-algebra is AF embeddable can be a very difficult problem since no abstract characterization of C*-subalgebras of AF algebras is known. As such, we will satisfy ourselves by studying embeddability in unital, simple AF algebras, where a deep theorem of Schafhauser can be applied \cite[Theorem A]{schafhauser}.

Since AF algebras are quasidiagonal, the problem of determining when the reduced group C*-algebra embeds into an AF algebra is deeply related to when the reduced group C*-algebra is quasidiagonal. In 2017, Tikuisis, White and Winter proved a discrete group $G$ is amenable if and only if $\cstar_r(G)$ is quasidiagonal \cite[Corollary C]{tikuisisww}, solving the Rosenberg conjecture in the affirmative. Moreover, Tikuisis, White and Winter show that if $G$ is countable and amenable, then $\cstar_r(G)$ embeds into an AF-algebra \cite[Corollary 6.6]{tikuisisww}. Schafhauser further strengthened this result by showing that $\cstar_r(G)$ embeds into the universal UHF algebra $\mathcal Q=\otimes_{n=1}^\infty \mathbb M_n$, which is a unital, simple AF algebra, for all countable, discrete groups $G$ \cite[Theorem B]{schafhauser}. Building upon Schafhauser's result, we characterize all second countable locally compact groups whose reduced group C*-algebra embeds into a unital, simple AF algebra.

We will require the following lemma, whose proof is similar to that of \cite[Theorem 1.3]{bekkals}.

\begin{lemma}\label{lem:weakcontain}
	Let $G$ be a locally compact group. For each $u\in \tpos(G)$, let $\pi_u$ denote the GNS representation of $u$.
	\begin{enumerate}
		\item[(i)] $S_{\Tr}=\{\pi_u : u\in \tpos(G)\}$ weakly contains $\lambda_{G/N_{\Tr}}$;
		\item[(ii)] $S_{\Tr}^r=\{\pi_u : u\in \tpos_r(G)\}$ 
		weakly contains $\lambda_{G/N_{\Tr}^r}$;
		\item[(iii)] $S_{\mathrm{amTr}}=\{\pi_u : u\in \tpos_{\mathrm{am}}(G)\}$ 
		weakly contains $\lambda_{G/N_{\amTr}}$.
	\end{enumerate}
\end{lemma}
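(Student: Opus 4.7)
The plan is to pass to the quotient $H = G/N_\Tr$.  Since $\ker\pi_u = N_u \supseteq N_\Tr$ by Lemma~\ref{lem:kernel}(ii), every representation in $S_\Tr$ factors through $H$, and by (\ref{eq:tracered}) the family is naturally identified with $\{\pi_v : v \in \tpos(H)\}$ viewed as representations of $H$.  Hence it suffices to show that $\lambda_H$ is weakly contained in $\bigoplus_{v \in \tpos(H)} \pi_v$ as representations of $H$, where by Proposition~\ref{prop:asin} the group $H$ is almost-$\csin$, unimodular, and tracially separated.

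The core step handles the $\csin$ case.  When $H \in [\csin]$, for each conjugation-invariant, symmetric, relatively compact open neighborhood $V$ of the identity, the function $u_V(s) = m(sV \cap V)/m(V)$ belongs to $\tpos(H)$ (as in the proof of Proposition~\ref{prop:trasin}(i)) and is simultaneously a matrix coefficient of $\lambda_H$ at the unit vector $\xi_V := 1_V/m(V)^{1/2} \in \bl^2(H)$.  Therefore $\pi_{u_V}$ is unitarily equivalent to the cyclic subrepresentation of $\lambda_H$ generated by $\xi_V$.  As $V$ ranges over a $\csin$-neighborhood base of the identity, the $\lambda_H$-translates of the $\xi_V$ span a dense subspace of $\bl^2(H)$, so $\lambda_H$ embeds as a subrepresentation of $\bigoplus_V \pi_{u_V}$, and the weak containment is immediate.

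For general almost-$\csin$ $H$, one localizes: any positive-definite matrix coefficient $\varphi = \langle \lambda_H(\cdot) f | f\rangle$ at $f \in \fC_c(H)$ is supported in a compactly generated open subgroup $H_0$ containing $\mathrm{supp}(f) \cdot \mathrm{supp}(f)^{-1}$, where $\varphi$ coincides with the analogous matrix coefficient of $\lambda_{H_0}$.  Since $\tpos(H)|_{H_0} \subseteq \tpos(H_0)$ still separates points of $H_0$, the subgroup $H_0$ is tracially separated and compactly generated, hence in $[\csin]$ by Corollary~\ref{cor:comgen} together with Proposition~\ref{prop:comgen}(ii).  Applying the $\csin$ case to $H_0$ approximates $\varphi$ uniformly on compacta by matrix coefficients of GNS representations of traces in $\tpos(H_0)$.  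The main obstacle, mirroring the key step of Bekka--Louvet~\cite{bekkals}, is to realize these $H_0$-trace matrix coefficients as matrix coefficients of representations in $S_\Tr$: for each such $u_V$, the zero-extension $\wtil{u_V}$ from $H_0$ to $H$ is positive-definite since $H_0$ is open, and via an averaging argument exploiting the asymptotic conjugation-invariance of $H$'s almost-$\csin$ neighborhoods, a suitable modification of $\wtil{u_V}$ lies in $\tpos(H)$ and supplies the required representation.  Parts (ii) and (iii) follow by the same scheme with $\tpos$ replaced by $\tpos_r$ and $\tpos_{\mathrm{am}}$ respectively, using the stability of these classes under restriction to and extension from open subgroups, together with the analogous almost-$\csin$-type structures of the quotients $G/N_\Tr^r$ and $G/N_{\amTr}$.
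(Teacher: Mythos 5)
Your reduction to the quotient $H=G/N_\Tr$ and your treatment of the case $H\in[\csin]$ are both correct: the functions $u_V$ are traces that are simultaneously matrix coefficients of $\lam_H$, and since the cyclic subspaces they generate together span $\bl^2(H)$ densely, the kernel of $\bigoplus_V\pi_{u_V}$ is contained in that of $\lam_H$, giving the weak containment. This already recovers part (i) for compactly generated $G$, where $H\in[\csin]$ by Corollary \ref{cor:comgen}. The problem is the passage to general $H$. Your plan is to take a trace $u_V\in\tpos(H_0)$ on a compactly generated open subgroup $H_0$, extend it by zero, and then turn it into an element of $\tpos(H)$ by an unspecified ``averaging argument.'' The zero-extension is indeed positive definite, but it is not a trace: $u_V$ is conjugation-invariant only under elements of $H_0$ (or of the subgroup $\langle WK\rangle$ appearing in Proposition \ref{prop:asin}), and $H_0$ need not be normal, so $\wtil{u_V}(st)=\wtil{u_V}(ts)$ can fail. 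Repairing this would require averaging the conjugates of $\wtil{u_V}$ over $t$ ranging through all of $H$, and there is no invariant mean available for that purpose ($H$ is neither compact nor assumed amenable, and the almost-$\csin$ sets are only \emph{asymptotically} invariant, so their coefficient functions need not be actual traces on $H$). As written, the key step of your argument is unsubstantiated, and I do not see how to carry it out along these lines.

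The paper avoids this difficulty entirely. It observes that $\tpos'(G)=\{u':u\in\tpos(G)\}$, viewed on $H$, is closed under pointwise multiplication and separates points of $H$ from the identity, so \cite[Lemma 1.1]{bekkals} produces a net $(v'_\alp)$ in the cone generated by $\tpos'(G)$ with $v'_\alp\,dm_{H}\to\delta_{q(e)}$; for $f\in\cont_c(H)$ the functions $f\cdot v'_\alp\cdot f^*$ are then positive-definite functions associated with the representations $\pi_{v'_\alp}$ and converge to $\langle\lam_{H}(\cdot)f,f\rangle$ uniformly on compact sets. This handles all three parts at once, since $\tpos_r$ and $\tpos_{\mathrm{am}}$ are likewise closed under products (by Fell absorption and by (\ref{eq:krocon}), respectively). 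If you wish to keep your localization strategy, you need a genuinely new idea at the extension step; otherwise the multiplicative-cone (Dirac net) argument is the efficient route.
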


\begin{proof}
	The proofs of these three claims are nearly identical. We shall only prove (i).
		
	Let $q\colon G\to G/N_{\Tr}$ denote the canonical quotient map. For each $u\in \tpos(G)$, we let $u'$ be the unique element of $P(G/N_{\Tr})$ such that $u=u'\circ q$ and $\pi_u'$ denote the unique representation of $G/N_{\Tr}$ such that $\pi_u=\pi_{u}'\circ q$. Further, we set $\tpos'(G)=\{u' : u\in \tpos(G)\}$.  Since $\tpos'(G)$ is closed under pointwise multiplication, \cite[Lemma 1.1]{bekkals} implies there exists a net $\{v_\alpha'\}$ contained in the positive cone generated by $\tpos'(G)$, $\pos_{S_1}(G/N_{\Tr})$, so that $v_\alp'\,d m_{G/N_{\Tr}}$ converges to the point mass $\delta_{q(e)}$ with respect to the topology $\sigma({\rm M}(G/N_{\Tr}), \cont_c(G/N_{\Tr}))$.  Let $f\in \cont_c(G/N_{\Tr})$.  If we consider the natural action of $\cstar(G/N_{\Tr})$ on ${\rm B}(G/N_{\Tr})$, we then have $f\cdot v_\alp'\cdot f^*\in \pos_{S_1}(G/N_{\Tr})$ for every $\alpha$. Since
\begin{align*}	
\lim_\alp \|f\cdot v_\alp'\cdot f^*\|_{\fsal(G/N_{\Tr})} &=\lim_\alp f\cdot v_\alp'\cdot f^*(q(e))
=\lim_\alp\langle f\cdot v_\alp'\cdot f^*,\delta_{q(e)}\rangle \\
&=\lim_\alp\langle v_\alp',f^*\ast f\rangle=f^*\ast f(q(e))
\end{align*}	
%	\begin{eqnarray*}
%		&&\lim_\alp \|f\cdot v_\alp'\cdot f^*\|_{{\rm B}(G/N_{\Tr})}\\
%		&=&\lim_\alp f\cdot v_\alp'\cdot f^*(e)\\
%		&=&\lim_\alp\langle f\cdot f^*,\delta_e\rangle\\
%		&=&\lim_\alp\langle v_\alp',f^**f\rangle\\
%		&=&f^**f(e),
%	\end{eqnarray*}
	we may assume that the net $(f\cdot v_\alp'\cdot f^*)$ is bounded in ${\rm B}(G/N_{\Tr})$. We also calculate that for all $g\in \cont_c(G/N_{\Tr})$ that
\begin{align*}
\lim_\alp \langle f\cdot v_\alp'\cdot f^*,g\rangle&=\lim_\alp \langle v_\alp', f^**g*f\rangle \\
&=f^**g*f(e)=\langle \lambda_{G/N_{\Tr}}(g)f,f\rangle.
\end{align*}		
%	\begin{eqnarray*}
%		&&\lim_\alp \langle f\cdot v_\alp'\cdot f^*,g\rangle\\
%		&=&\lim_\alp \langle v_\alp', f^**g*f\rangle\\
%		&=&f^**g*f(e)\\
%		&=&\langle \lambda_{G/N_{\Tr}}(g)f,f\rangle.
%	\end{eqnarray*}
	It follows that $(f\cdot v_\alp'\cdot f^*)$ converges to the element $u'=\langle\lambda_{G/N_{\Tr}}(\cdot)f,f\rangle$ in the weak* topology on ${\rm B}(G/N_{\Tr})$. Hence, $\lambda_{G/N_{\Tr}}$ is weakly contained in $S_1'=\{\pi_u' : u\in \tpos(G)\}$, viewed as representations of $G/N_{\Tr}$, and $u'=\lim_\alp f\cdot v_\alp'\cdot f^*$ uniformly on compact
	subsets of $G/N_{\Tr}$.  But then $u=u'\circ q$ is the uniform limit on compact subsets of $G$
	of $((f\cdot v_\alp'\cdot f^*)\circ q)$, and this implies (i).
\end{proof}

The following theorem greatly generalizes \cite[Cor.\ 2.10]{beltitab}, which asserts the C*-algebra of a connected, second countable, locally compact, solvable group embeds into a simple, unital AF algebra if and only if the group is abelian.

\begin{theorem}\label{theo:redAF}
	Let $G$ be a second countable locally compact group. The following are equivalent.
	\begin{enumerate}
		\item[(i)] $\cstar_r(G)$ embeds into a simple, unital AF algebra.
		\item[(ii)] $\cstar_r(G)$ admits a faithful amenable trace.
		\item[(iii)] $G$ is amenable and tracially separated.
	\end{enumerate}
\end{theorem}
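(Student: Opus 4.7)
The plan is to close a cycle of three implications: $(iii)\Rightarrow(ii)$, $(ii)\Rightarrow(i)$ via Schafhauser's embedding theorem \cite[Theorem A]{schafhauser}, and the easier $(i)\Rightarrow(ii)\Rightarrow(iii)$ by pullback and Theorem \ref{theo:amtam}.

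For $(iii)\Rightarrow(ii)$, amenability yields $\cstar_r(G)=\cstar(G)$ nuclear, so by Remark \ref{rem:amkmap}(ii) every trace is amenable, and it suffices to construct a faithful one. Second countability of $G$ makes $\cstar(G)$ separable, and thus $\tpos(G)$ is weak*-compact and metrizable; pick a countable weak*-dense subset $\{u_n\}\subseteq\tpos(G)$ and set $\tau=\sum_n 2^{-n}u_n$. If $\tau(f^*f)=0$ then $u_n(f^*f)=0$ for each $n$, and weak*-density promotes this to $u(f^*f)=0$ for every $u\in\tpos(G)$. Since the cyclic GNS vector of a trace is separating for the associated tracial von Neumann algebra, $\pi_u(f)=0$ for every $u$. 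Lemma \ref{lem:weakcontain}(i) together with $N_\Tr=\{e\}$ then gives $\lambda_G(f)=0$, and amenability forces $f=0$. Conversely $(ii)\Rightarrow(iii)$: a faithful amenable trace belongs to $\tpos_{r,\mathrm{am}}(G)$, so Theorem \ref{theo:amtam} gives amenability; the corresponding $u\in\tpos(G)=\tpos_r(G)$ has faithful integrated GNS representation on $\cstar(G)$, but $\pi_u$ factors through the surjection $\cstar(G)\to\cstar(G/N_u)$, whose kernel is nonzero whenever $N_u\ne\{e\}$, forcing $N_\Tr\subseteq N_u=\{e\}$.

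The implication $(i)\Rightarrow(ii)$ is routine: any simple unital AF algebra $A$ admits a tracial state, which is faithful by simplicity and amenable since AF algebras are nuclear; pulling back along the embedding $\iota\colon\cstar_r(G)\hookrightarrow A$ preserves faithfulness (by injectivity of $\iota$) and amenability (by functoriality of the minimal tensor product under $\iota\otimes\iota^{\op}$, which restricts the state witnessing amenability on $A$). The deep direction $(ii)\Rightarrow(i)$ would be handled by applying Schafhauser's theorem, which requires $\cstar_r(G)$ to be separable, exact, in the UCT class, and to admit a faithful amenable trace. Separability follows from second countability; exactness (in fact nuclearity) from $G$ being amenable via $(ii)\Rightarrow(iii)$; the faithful amenable trace is the hypothesis. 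The main obstacle is verifying UCT for $\cstar_r(G)$: I would appeal to Tu's theorem that every $\cstar$-algebra of a second countable Hausdorff amenable locally compact groupoid with Haar system lies in the bootstrap class, applied to $G$ regarded as such a groupoid via its Haar measure.
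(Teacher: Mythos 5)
Your proposal is correct and follows essentially the same route as the paper: the deep direction (ii)$\Rightarrow$(i) rests on Schafhauser's theorem combined with Tu's UCT result, amenability in (ii)$\Rightarrow$(iii) comes from Theorem \ref{theo:amtam}, and the faithful trace needed for (iii) is assembled from a countable separating family via Lemma \ref{lem:weakcontain}(i) and a weighted sum $\sum_n 2^{-n}u_n$ (the paper selects the $u_n$ against a dense sequence in $\bl^1(G)$ rather than against a weak*-dense subset of $\tpos(G)$, but this is the same idea). One cosmetic slip: for non-discrete $G$ the algebra $\cstar(G)$ is non-unital, so $\tpos(G)$ need not be weak*-compact; your argument only uses weak*-separability of $\tpos(G)$, which does follow from separability of $\cstar(G)$, so nothing breaks.
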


\begin{proof}

(i) $\Rightarrow$ (ii).  A unital AF algebra is always nuclear and has a faithful trace.  Hence
any C*-subalgebra admits an amenable trace.

(ii) $\Rightarrow$ (iii). Suppose $\cstar_r(G)$ admits a faithful amenable trace $u$.  It is immediate
from Theorem \ref{theo:amtam} that $G$ is amenable.  Since $u$ is faithful, $N_u=\{e\}$ and
$G$ is tracially separated.

(iii) $\Rightarrow$ (i): Suppose $G$ is amenable and tracially separated. Since $G$ is second countable, the group algebra $\bl^1(G)$ is separable. Let $(f_n)_{n=1}^\infty$ be a sequence in $\bl^1(G)$ with dense range. Since $\lambda$ is weakly contained in $\{\pi_u : u\in \tpos(G)\}$ by the previous lemma, for each natural number $n$, we can find $u_n\in \tpos(G)$ such that $\|\pi_{u_n}(f)\|\geq \frac{1}{2}\|\lambda(f_n)\|$. Then $\{\pi_{u_n} : n\in \mathbb N\}$ separates points of $\cstar_r(G)$ and, so, $u:=\sum_{n=1}^\infty \frac{1}{2^n} u_n$ is a faithful trace on the separable, nuclear C*-algebra $\cstar_r(G)$. As the reduced group C*-algebra of an amenable, second countable locally compact group satisfies the UCT by a result of Tu \cite{tu}, we conclude that $\cstar_r(G)$ embeds into a AF algebra by Schafhauser's theorem \cite[Theorem A]{schafhauser}.
\end{proof}

If we further restrict our attention to only second countable, compactly generated groups $G$, the results obtained in Section \ref{sec:traker} combined with Theorem \ref{theo:redAF} imply $\cstar_r(G)$ embeds into a simple, unital AF algebra if and only if $G$ is amenable and SIN.

As an immediate consequence of Theorem \ref{theo:redAF}, one obtains that if $G$ is an amenable, tracially separated, second countable locally compact group, then $\cstar_r(G)$ is quasidiagonal.  We can remove the assumption of second countability.  We begin by
mildly expanding on an observation of Lau and Losert \cite[Rem.\ 14(b)]{laul}
which gives a variant of the Kakutani-Kodaira Theorem.

\begin{lemma}\label{lem:laul}
Let $H$ be a $\sig$-compact locally compact group. Then there is decreasing net
$(K_i)$ of compact normal subgroups for which $\bigcap_iK_i=\{e\}$ and each $H/K_i$
is metrizable, hence second countable.
\end{lemma}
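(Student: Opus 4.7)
The Kakutani-Kodaira theorem furnishes, for every neighborhood $U$ of the identity in a $\sigma$-compact locally compact group $H$, a compact normal subgroup $K \subseteq U$ with $H/K$ metrizable. The only additional content in the lemma is to organize such subgroups into a \emph{decreasing} net. My plan is therefore to let $\mathcal{K}$ denote the collection of all compact normal subgroups $K$ of $H$ with $H/K$ metrizable, order $\mathcal{K}$ by reverse inclusion, and verify that it is a directed set. Kakutani-Kodaira supplies a member of $\mathcal{K}$ inside every neighborhood of $e$, so $\bigcap_{K \in \mathcal{K}} K = \{e\}$ by Hausdorffness, and each $H/K$ is both metrizable and $\sigma$-compact (as the image of $H$), hence second countable.

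The nontrivial step is closure of $\mathcal{K}$ under binary intersection: given $K_1, K_2 \in \mathcal{K}$, show $H/(K_1 \cap K_2)$ is metrizable. I would consider the natural continuous injective homomorphism
\[
\phi : H/(K_1 \cap K_2) \longrightarrow H/K_1 \times H/K_2, \quad xK \mapsto (xK_1, xK_2),
\]
whose codomain is metrizable as a product of two metrizable spaces. If $\phi$ is a topological embedding, then $H/(K_1 \cap K_2)$ inherits a metric from its image. Closedness of the image can be checked directly: if $x_\alpha K_i \to y_i K_i$ for $i = 1, 2$, a neighborhood-base argument at $e$ in $H$ yields $y_2^{-1} y_1 \in \bigcap_V K_2 V V^{-1} K_1 = K_2 K_1 = K_1 K_2$, using compactness of the $K_i$, Hausdorffness of $H$, and normality. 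Writing $y_2^{-1} y_1 = k_1 k_2$, the element $y = y_2 k_2$ satisfies $\phi(y(K_1 \cap K_2)) = (y_1 K_1, y_2 K_2)$. Since $H/(K_1 \cap K_2)$ is $\sigma$-compact, Pontryagin's open mapping theorem then promotes $\phi$ to a topological isomorphism onto its (closed, metrizable) image.

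The main obstacle is this closed-image verification; once that is in hand, the open mapping theorem and Kakutani-Kodaira combine routinely to deliver the decreasing net. If the embedding approach feels heavy, an alternative is to prove first countability of $H/(K_1 \cap K_2)$ directly: pull countable neighborhood bases of $e K_i$ in $H/K_i$ back to saturated neighborhoods of $K_i$ in $H$, then use the standard fact that disjoint compact sets $K_i \setminus W$ in a Hausdorff space admit disjoint open neighborhoods to show the countable family of pairwise intersections forms a neighborhood base of $K_1 \cap K_2$ in $H$, whence descends to a countable base at $e(K_1 \cap K_2)$ in $H/(K_1 \cap K_2)$.
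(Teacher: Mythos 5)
Your proof is correct, but it takes a genuinely different route from the paper's. The paper argues through function algebras: for each finite subset $F$ of $H$ it builds a separable, bi-translation-invariant closed subalgebra of $\cont_0(H)$ containing functions separating the points of $F$ from $e$, invokes Lau--Losert's structure result to identify that subalgebra with $\cont_0(H/K_F)$ for a compact normal subgroup $K_F$, and reads off metrizability from separability of the algebra; the decreasing net then comes for free since enlarging $F$ enlarges the algebra and shrinks $K_F$. You instead take the classical Kakutani--Kodaira theorem as a black box (legitimate --- the lemma's new content is only the organization into a decreasing net with trivial intersection, so there is no circularity) and do the real work in showing that the family of compact normal subgroups with metrizable quotient is closed under finite intersections. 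Both of your arguments for that step are sound: the embedding $H/(K_1\cap K_2)\hookrightarrow H/K_1\times H/K_2$ does have closed range by the $\bigcap_V K_2VV^{-1}K_1=K_2K_1$ computation (a Wallace-lemma compactness argument), and $\sigma$-compactness plus the open mapping theorem then upgrades it to a topological embedding; but your alternative is cleaner and should be promoted to the main argument --- by Birkhoff--Kakutani, metrizability of $H/(K_1\cap K_2)$ is just first countability at the identity, and the separation of the disjoint compact sets $K_1\setminus W$ and $K_2\setminus W$ by disjoint open sets $A, B$ gives $(A\cup W)\cap(B\cup W)\subseteq W$, so products of the two countable saturated bases form a countable base at $K_1\cap K_2$. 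What the paper's approach buys is uniformity with the rest of its Fourier-algebra toolkit and an explicitly indexed net; what yours buys is a more elementary, purely topological proof together with the reusable standalone fact that co-metrizability of compact normal subgroups is preserved under finite intersection.
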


\begin{proof}
For any $f$ in $\cont_c(H)$, $\sig$-compactness of $H$ shows that the set of translates
$\{f(r\cdot t)\}_{r,t\in G}$ is separable in $\cont_0(H)$. 
For any $s\in H\setminus\{e\}$, let $f_s\in\cont_c(G)$ be so $f_s(s)=0$ while $f_s(e)=1$.
Then for each finite subset $F\subset H$, the smallest closed 
subalgebra containing all translates of $\{f_s\}_{s\in F}$ in $\cont_0(H)$ is separable,
and, by \cite[Lem.\ 12]{laul}, isomorphic to $\cont_0(G/K_F)$ for a compact normal
subgroup $K_F$, so $H/K_F$ is $\sig$-compact.  
Bounded sets in the dual space are hence metrizable, so $G/K_F$ is
metrizable.  With our choices, $K_F\supseteq K_{F'}$ if $F\subseteq F'$.  In a metrizable
space any compact set is second countable, so $\sig$-compact metrizable spaces
are second countable.
\end{proof}

Comments in \cite[Ex. 1.19]{leidermanmt} show that we cannot relax the assumption of
$\sig$-compactness, above.

\begin{corollary}\label{cor:redQuasi}
	If $G$ is an amenable, tracially separated locally compact group, then $\cstar_r(G)$ is quasidiagonal.
\end{corollary}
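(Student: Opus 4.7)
The plan is to bootstrap Corollary \ref{cor:redQuasi} from the second countable case handled by Theorem \ref{theo:redAF}, using two standard facts: a C*-algebra is quasidiagonal if and only if every separable C*-subalgebra is, and quasidiagonality passes to C*-subalgebras. Since $G$ is amenable, $\cstar_r(G)=\cstar(G)$, so it suffices to show that an arbitrary separable C*-subalgebra $A\subseteq\cstar_r(G)$ is quasidiagonal.

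I would first approximate countably many generators of $A$ by $\lam(g_{n,k})$ with $g_{n,k}\in\fC_c(G)$, and let $H$ be the open compactly-generated subgroup of $G$ generated by a relatively compact open neighbourhood of $e$ together with $\bigcup_{n,k}\mathrm{supp}(g_{n,k})$.  Then $H$ is $\sig$-compact, amenable (as an open subgroup of an amenable group), and tracially separated (by restriction of traces from $G$); by Corollary \ref{cor:comgen} and Proposition \ref{prop:comgen}(ii), $H\in[\csin]$.  Openness together with amenability gives an isometric embedding $\cstar(H)\hookrightarrow\cstar(G)$, and $A\subseteq\cstar(H)$.

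The main step is to place $A$ inside a corner $P\cstar(H) P\cong\cstar(H/K)$ for a compact normal $K\subseteq H$ with $H/K$ second countable.  I would first extract from Lemma \ref{lem:laul} the reinforcement that, because its decreasing net $(K_F)_F$ has $\bigcap_F K_F=\{e\}$, a finite-intersection-property argument on the compact sets $K_F\setminus V$ yields, for every open neighbourhood $V$ of $e$, some $K_F\subseteq V$ with $H/K_F$ second countable.  Using uniform continuity of each $g_{n,k}$, for each $k$ choose a compact normal $K^{(k)}\subseteq H$ with $H/K^{(k)}$ second countable and $\|m_{K^{(k)}}\ast g_{n,k}\ast m_{K^{(k)}}-g_{n,k}\|_1<1/k$ for all $n\le k$.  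Set $K=\bigcap_k K^{(k)}$ and $L_n=\bigcap_{k\le n} K^{(k)}$.  Each $L_n$ is compact normal with $H/L_n$ second countable (closed subgroup of the finite product $\prod_{k\le n}H/K^{(k)}$), and the same finite-intersection argument shows every open neighbourhood of $K=\bigcap_n L_n$ contains some $L_n$.  Hence $K$ has a countable neighbourhood base in $H$, so $H/K$ is first countable, and (being $\sig$-compact and locally compact) second countable.  Since $K\subseteq K^{(k)}$, averaging over $K$ still keeps each $g_{n,k}$ within $1/k$ in $\bl^1$-norm, so $\lam(m_K\ast g_{n,k}\ast m_K)\to a_n$ in norm as $k\to\infty$, placing each generator $a_n$ inside $P\cstar(H) P$ with $P=\lam(m_K)$, and hence $A\subseteq P\cstar(H) P$.

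Finally, $H/K$ is second countable, amenable (quotient of amenable), and (as a quotient of $H\in[\csin]$) lies in $[\csin]$; by Proposition \ref{prop:trasin}(i) it is tracially separated.  Theorem \ref{theo:redAF} then gives that $\cstar(H/K)$ embeds into a simple unital AF algebra, and in particular is quasidiagonal.  The corner identification $P\cstar(H) P\cong\cstar(H/K)$ (a standard fact for compact normal subgroups) yields that $A$ is quasidiagonal.  The principal obstacle is coordinating the two competing constraints on $K$: small enough that all countably many $g_{n,k}$ become nearly $K$-bi-invariant, yet large enough that $H/K$ remains second countable.  This is resolved by the strengthened Lemma \ref{lem:laul} together with the fact that a countable intersection of compact normal subgroups whose quotients are second countable still has second countable quotient.
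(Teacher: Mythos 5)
Your argument is correct and rests on exactly the same ingredients as the paper's proof: reduction to compactly generated open subgroups $H$, Lemma \ref{lem:laul} to produce compact normal subgroups $K$ with $H/K$ second countable, the corner identification $\lam(m_K)\cstar_r(H)\lam(m_K)\cong\cstar_r(H/K)$, and Theorem \ref{theo:redAF}. Where you diverge is in the final assembly: the paper simply observes that the corners $\cstar_r(H)P_i$, $P_i=\lam(m_{K_i})$, form a family with dense union in $\cstar_r(H)$ (since $m_{K_i}\ast f\ast m_{K_i}\to f$ in $\bl^1$ for $f\in\fC_c(H)$) and invokes the local permanence of quasidiagonality \cite[Prop.\ 7.1.9]{brownozawa}, whereas you first cut down to a separable subalgebra and then manufacture a single compact normal $K$ (a countable intersection) whose corner contains it. Your route works, but it obliges you to verify two extra facts --- that a countable intersection of compact normal subgroups with second countable quotients again has second countable quotient, and that every neighbourhood of such an intersection absorbs one of the finite sub-intersections --- both true but avoidable. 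One small repair: from $\|m_{K^{(k)}}\ast g\ast m_{K^{(k)}}-g\|_1<1/k$ and $K\subseteq K^{(k)}$ you cannot literally conclude $\|m_K\ast g\ast m_K-g\|_1<1/k$, since an averaged bound does not automatically pass to subgroups; you should instead choose $K^{(k)}$ so that all two-sided translates of $g_{n,k}$ by pairs from $K^{(k)}$ lie within $1/k$ of $g_{n,k}$ in $\bl^1$-norm, which is what continuity of translation delivers anyway and which is inherited by every closed subgroup of $K^{(k)}$. Your justification that $H$ and $H/K$ are tracially separated --- via $H\in[\csin]$ from Corollary \ref{cor:comgen} and Proposition \ref{prop:comgen}, then Proposition \ref{prop:trasin}(i) applied to the quotient --- is a point the paper's proof leaves implicit, and is worth having spelled out.
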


\begin{proof}
Let $H$ be a compactly generated, hence $\sig$-compact subgroup of $G$.  Let
$(K_i)$ be as in Lemma \ref{lem:laul}.  Then each $P_i=\lam(m_{K_i})$ is a central
projection in the multiplier algebra of $\cstar_r(H)$ with $\cstar_r(H)P_i\cong\cstar_r(H/K_i)$, 
which is quasi-diagonal as a consequence of Theorem \ref{theo:redAF} (i).  Furthermore
$\bigcup_i \cstar_r(H)P_i$ is dense in $\cstar_r(H)$, showing that the latter is quasi-diagonal,
thanks to, for example, \cite[Prop.\ 7.1.9]{brownozawa}.
But then the union $\bigcup_H \cstar_r(H)$, over all compactly generated open subgroups, is 
norm dense in $\cstar_r(G)$, giving the desired result.
\end{proof}

Interestingly, the condition of $G$ being tracially separated in Corollary \ref{cor:redQuasi}  cannot be dropped. Indeed, examples of amenable Lie groups whose reduced C*-algebra is not quasidiagonal are given in \cite{beltitaa}. 

As a further application of the techniques we have developed in this paper, we show that full C*-algebras of property (T) groups do not embed inside of simple, unital AF algebras.

\begin{proposition}
	Let $G$ be a non-compact locally compact group with property (T). Then $\cstar(G)$ does not embed inside of a simple, unital AF algebra.
\end{proposition}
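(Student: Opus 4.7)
The plan is to argue by contradiction: suppose $\cstar(G) \hookrightarrow A$ for a simple, unital AF algebra $A$. I will extract a faithful amenable trace on $\cstar(G)$ and then use this to force $G$ to be amenable, which clashes with the hypothesis that $G$ is a non-compact property (T) group.

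First I would produce the trace. A unital AF algebra is nuclear and admits tracial states; since $A$ is simple, any nonzero tracial state is automatically faithful (the trace-kernel $\{a\in A:\tau(a^*a)=0\}$ is a closed two-sided ideal of $A$, hence either $0$ or all of $A$). Fix a faithful tracial state $\tau_A$ on $A$. Nuclearity of $A$ makes $\tau_A$ amenable in Kirchberg's sense: the functional $a\otimes b\mapsto \tau_A(ab)$ extends to a state $\Phi$ on $A\otimes_{\min} A^\op$. Restricting $\tau_A$ to $\cstar(G)$ and normalizing (this is possible because faithfulness of $\tau_A$ forces the restriction to be nonzero on any approximate identity of $\cstar(G)$) produces a faithful tracial state $\tau$ on $\cstar(G)$. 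The canonical inclusion $\cstar(G)\otimes_{\min}\cstar(G)^\op\hookrightarrow A\otimes_{\min} A^\op$ lets me restrict $\Phi$ and normalize to obtain a state extending $a\otimes b\mapsto \tau(ab)$, showing that $\tau$ is amenable. Let $u\in\tpos_{\mathrm{am}}(G)$ be the corresponding trace on $G$.

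Next, I would convert faithfulness and amenability of $\tau$ into representation-theoretic information. Faithfulness of $\tau$ on $\cstar(G)$ makes the GNS representation $\pi_u$ injective on $\cstar(G)$; equivalently, $\pi_u$ weakly contains every unitary representation of $G$, including $\lam_G$. Amenability of $u$ translates, via Remark \ref{rem:amkmap} and the Kirchberg--Brown correspondence, into $\pi_u\otimes\bar\pi_u$ weakly containing the trivial representation of $G$, which is precisely Bekka's notion of an amenable unitary representation. I then invoke Bekka's theorem that a locally compact group which admits a faithful amenable unitary representation must itself be amenable, and conclude that $G$ is amenable. The final step is then classical: amenability gives $1_G\prec\lam_G$, property (T) upgrades this weak containment to $1_G$ being a subrepresentation of $\lam_G$, and this forces the constant functions to lie in $\bl^2(G)$, hence $G$ is compact, contradicting the non-compactness hypothesis.

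The main technical obstacle is the bridge in the second paragraph from a faithful amenable trace on $\cstar(G)$ to amenability of $G$, which rests on Bekka's external theorem on amenable unitary representations. Everything else is either internal to the amenable-trace formalism developed in Section \ref{sec:ametra} or is standard structure theory for simple unital AF algebras (nuclearity and existence of faithful traces); note in particular that we do \emph{not} only obtain $N_\amTr=\{e\}$ (which by Theorem \ref{theo:amkT} would only yield $G$ maximally almost periodic, a condition compatible with non-compact property (T) groups such as $\SL_3(\Zee)$), so the stronger faithfulness of $\tau$ on the C*-algebra, not merely on $G$, is essential.
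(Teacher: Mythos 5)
Your first paragraph agrees with the paper: an embedding into a simple, unital AF algebra yields a faithful amenable tracial state on $\cstar(G)$. The fatal problem is the bridge in your second paragraph. There is no theorem of Bekka asserting that a group with a faithful amenable unitary representation is amenable; indeed every locally compact group admits one. Amenability of representations passes upward under weak containment (if $\sigma$ is amenable and $\sigma\prec\pi$ then $1\prec\sigma\otimes\bar{\sigma}\prec\pi\otimes\bar{\pi}$), so the universal representation $\varpi$, which weakly contains the trivial representation, is always amenable and always faithful on $\cstar(G)$. Bekka's actual theorem runs with the containment in the opposite direction: an amenable representation \emph{weakly contained in} $\lam_G$ forces $G$ to be amenable. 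That is precisely what Theorem \ref{theo:amtam} exploits, and it is why that theorem requires $\til{u}\in\pos_r(G\times G)$ (a \emph{reduced} amenable trace) rather than merely $\til{u}\in\pos_{\min}(G\times G)$: restriction to the diagonal subgroup carries $\pos_r(G\times G)$ into $\pos_r(G)$, whereas it carries $\pos_{\min}(G\times G)$ only into $\pos(G)$, which yields nothing. You have $\lam_G\prec\pi_u$, not $\pi_u\prec\lam_G$. A concrete counterexample at the level of traces: $\cstar(\Fr_2)$ is residually finite dimensional, so a weighted $\ell^1$-sum of the normalized traces of a separating sequence of finite-dimensional representations is a faithful amenable trace on $\cstar(\Fr_2)$, yet $\Fr_2$ is not amenable. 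Hence your argument cannot conclude that $G$ is amenable, and the proof collapses at this step.

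The paper keeps your first paragraph but then follows exactly the route you dismiss at the end of your proposal: faithfulness gives $N_\amTr=\{e\}$, so Theorem \ref{theo:amkT} (property (T)) gives $G\in[\cmap]$, whence $G$ has the factorization property; property (T) gives compact generation, so tracial separation puts $G$ in $[\csin]$ by Corollary \ref{cor:comgen}, hence $G$ is inner amenable; and for such a group, which is non-amenable because it is non-compact with property (T), $\cstar(G)$ fails to be exact by a result of Wiersma. The contradiction is with exactness of C*-subalgebras of AF algebras, not with amenability of $G$. Some obstruction of this kind is genuinely needed, since maximal almost periodicity and a faithful amenable trace are, as you note, compatible with non-amenability.
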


\begin{proof}
	Suppose towards a contradiction that $\cstar(G)$ embeds into a simple, unital AF algebra. Then $\cstar(G)$ admits a faithful, amenable trace. Consequently, $N_{\amTr}=\{e\}$ implying that $G$ is maximally almost periodic by Theorem \ref{theo:amkT}. So $G$ has the factorization property by \cite[Cor.\ 4.4]{wiersma}. Since groups with property (T) are compactly generated and $G$ is tracially separated, we also have that $G$ is a SIN group. It now follows from \cite[Prop.\ 3.1]{wiersma} that $\cstar(G)$ is not exact since SIN groups are inner amenable, contradicting the fact that C*-subalgebras of AF algebras are exact.
\end{proof}

\section{Questions}

Proposition \ref{prop:trasin} shows that $N_\Tr\subseteq N_\csin$, and these coincide
if $G$ is compactly generated.

\begin{question}
Is there a locally compact group for which $N_\Tr\subsetneq N_\csin$?
\end{question}

%We saw in Remark \ref{rem:amkmap} and Theorem \ref{theo:amkT} that
%$N_\amTr=N_\cmap$ whenever $G$ is maximally almost periodic, almost connected,
%or has property (T).

%\begin{question}
%Can we characterize locally compact groups $G$ for which $N_\amTr=N_\cmap$?
%\end{question}

Theorem \ref{theo:facpro} shows that an amenably tracially separated totally 
disconnected group admits property (F).  Furthermore, if $G$ is
(amenably) tracially separated, then $G_0=V\times K$ is amenable.

\begin{question}
If $G$ is amenably tracially separated, does it admit property (F)?
\end{question}

%%END MAIN BODY

\vfill

%\pagebreak

%%BIBLIOGRAPHY

\end{document}